\numberwithin{equation}{section}
\newtheorem{theorem}{Theorem}[section]
\newtheorem{thm}[theorem]{Theorem}
\newtheorem{lemma}[theorem]{Lemma}
\newtheorem{lem}[theorem]{Lemma}
\newtheorem{prop}[theorem]{Proposition}
\newtheorem{coro}[theorem]{Corollary}
\newtheorem{conj}[theorem]{Conjecture}
\theoremstyle{definition}
\newtheorem{definition}[theorem]{Definition}
\newtheorem{defi}[theorem]{Definition}
\newtheorem{exa}[theorem]{Example}
\newtheorem{quest}[theorem]{Question}
 \newtheorem*{ackn}{Acknowledgements}
 \newtheorem*{thmA}{Theorem A} 
 \newtheorem*{thmB}{Theorem B} 
\newtheorem*{thmC}{Theorem C}
 \newcommand{\R}{\mathbb R}
 \newcommand{\C}{\mathbb C}
 \newcommand{\N}{\mathbb N}
 \newcommand{\e}{\varepsilon}
 \newcommand{\f}{\varphi}
 \newcommand{\p}{\psi}
 \newcommand \psh {{\rm PSH}}
 \newcommand \PSH {{\rm PSH}}
\subjclass[2010]{32W20, 32U05, 32Q15, 35A23}
\keywords{Monge-Amp\`ere  equation, a priori estimates}
\begin{document}

\title[Quasi-plurisubharmonic envelopes 2]{Quasi-plurisubharmonic envelopes 2: Bounds on Monge-Amp\`ere volumes}

\author{Vincent Guedj \& Chinh H. Lu}

\address{Institut de Math\'ematiques de Toulouse   \\ Universit\'e de Toulouse \\
118 route de Narbonne \\
31400 Toulouse, France\\}

\email{\href{mailto:vincent.guedj@math.univ-toulouse.fr}{vincent.guedj@math.univ-toulouse.fr}}
\urladdr{\href{https://www.math.univ-toulouse.fr/~guedj}{https://www.math.univ-toulouse.fr/~guedj/}}

\address{Universit\'e Paris-Saclay, CNRS, Laboratoire de Math\'ematiques d'Orsay, 91405, Orsay, France.\\
Current address: Univ Angers, CNRS, LAREMA, SFR MATHSTIC, F-49000 Angers, France.}


\email{\href{mailto:hoangchinh.lu@univ-angers.fr}{hoangchinh.lu@univ-angers.fr}}
\urladdr{\href{https://math.univ-angers.fr/~lu/}{https://math.univ-angers.fr/~lu/}}

\date{\today}


 \begin{abstract}
  In \cite{GL21a} we have developed   a new   approach to $L^{\infty}$-a priori estimates
  for degenerate complex Monge-Amp\`ere equations,
when the reference form is  closed. 
This simplifying assumption was used to ensure the constancy of the  volumes of Monge-Amp\`ere  measures.

We study here the way these volumes stay away from zero and infinity
when the reference form is no longer closed.
We establish a transcendental version of the Grauert-Riemenschneider conjecture,
partially answering conjectures of Demailly-P\u{a}un \cite{DP04} and Boucksom-Demailly-P\u{a}un-Peternell \cite{BDPP13}.

Our approach relies on a fine use of quasi-plurisubharmonic envelopes.
The results obtained here will be used in  \cite{GL21b} for solving
degenerate complex Monge-Amp\`ere equations on compact Hermitian varieties.
\end{abstract}


 \maketitle

\tableofcontents


\section*{Introduction}

The study of complex Monge-Amp\`ere equations on compact Hermitian (non K\"ahler) manifolds
has gained considerable interest in the last decade, after Tosatti and Weinkove
established an appropriate version of Yau's theorem in \cite{TW10}.
The smooth Gauduchon-Calabi-Yau conjecture has been further solved   by
Sz\'ekelyhidi-Tosatti-Weinkove \cite{STW17}, while the pluripotential
theory has been partially extended by Dinew, Ko{\l}odziej, and Nguyen \cite{DK12,KN15,Din16, KN19}.

As in Yau's original proof \cite{Yau78}, the method of \cite{TW10} consists in establishing a priori estimates along a continuity path,
and the most delicate estimate turns out again to be the a priori $L^{\infty}$-estimate.
The fact that the reference form is not closed introduces several new difficulties:
there are many extra terms to handle when using Stokes theorem,
and it becomes non trivial to get uniform bounds on the total Monge-Amp\`ere volumes
involved in the estimates.

In \cite{GL21a} we have developed 
a new approach for establishing   uniform a priori estimates,
restricting to the context of K\"ahler manifolds for simplicity.
While the pluripotential approach consists in measuring the Monge-Amp\`ere capacity of
sublevel sets $(\f<-t)$, we directly measure the volume of the latter, avoiding
delicate integration by parts. Our approach   applies in
  the Hermitian  setting, once certain Monge-Amp\`ere 
  volumes are under control.
  Understanding the behavior of these volumes is the main focus of this article,
  while \cite{GL21b} is concerned with the resolution of degenerate complex Monge-Amp\`ere equations.

 \smallskip

We let $X$ denote a compact complex manifold of complex dimension $n$, equipped with a 
Hermitian metric $\omega_X$. The first difficulty we face is to decide whether
$$
v_+(\omega_X):=\sup \left\{ \int_X (\omega_X+dd^c \f)^n \; : \; \f \in\PSH(X,\omega_X) \cap L^{\infty}(X) \right\}
$$
is finite.
Here $d=\partial+\overline{\partial}$, $d^c=\frac{1}{2i} (\partial-\overline{\partial})$,
and $\PSH(X,\omega_X)$ is the set of $\omega_X$-plurisubharmonic functions: these are
functions $u:X \rightarrow \R \cup \{-\infty\}$ which are locally given as the sum of 
a smooth and a plurisubharmonic function, and such that
$\omega_X+dd^c u \geq 0$ is a positive current.
The complex Monge-Amp\`ere measure $(\omega_X+dd^c u)^n$ is well-defined by \cite{BT82}.

Building on works of Chiose \cite{Chi16} and Guan-Li \cite{GL10}
we provide several   results which ensure that the condition $v_+(\omega_X)<+\infty$
 is  satisfied:
\begin{itemize}
\item for any compact complex manifold $X$ of dimension $n \leq 2$;
\item for any threefold which admits a pluriclosed metric $dd^c \tilde{\omega}_X=0$;
\item as soon as there exists a metric $\tilde{\omega}_X$ such that $dd^c \tilde{\omega}_X=0$ and $dd^c \tilde{\omega}_X^2=0$;
\item as soon as $X$ belongs to the Fujiki class ${\mathcal C}$.
\end{itemize}
The Fujiki class   is the class of compact complex manifolds that are bimeromorphically 
 equivalent to K\"ahler manifolds.
 
 \smallskip

We also need to bound the Monge-Amp\`ere volumes from below.
Given a semi-positive form $\omega$, we introduce several positivity properties:
\begin{itemize}
\item we say $\omega$ is {\it non-collapsing} if there is no bounded $\omega$-plurisubharmonic function $u$ such that 
$(\omega+dd^c u)^n \equiv 0$;
\item $\omega$ satisfies condition $(B)$ if there exists a constant $B>0$ such that
$$
 -B \omega^2 \leq dd^c \omega \leq B \omega^2
 \; \; \text{ and } \; \;
 -B \omega^3 \leq d \omega \wedge d^c \omega \leq B \omega^3;
 $$
 \item we say $\omega$ is {\it  uniformly non-collapsing} if
 $$
v_-(\omega):=\inf \left\{ \int_X (\omega+dd^c u)^n \; : \; u \in\PSH(X,\omega) \cap {L}^{\infty}(X)  \right\} >0.
$$
\end{itemize}

The non-collapsing condition is the minimal positivity condition  one should require.
We show in Proposition \ref{pro:domination} that it implies the {\it domination principle},
a useful extension of the classical maximum principle.
We provide a simple example showing that having positive volume
$\int_X \omega^n>0$ does not prevent from being collapsing (see Example \ref{exa:collapsing}).

After providing a simplified proof of Ko{\l}odziej-Nguyen modified comparison principle
(see \cite[Theorem 0.5]{KN15} and Theorem \ref{thm:pcpr}),
we show that condition (B) implies non-collapsing.
The former condition is e.g. satisfied by any form $\omega$ which is the pull-back
of a Hermitian form on a singular Hermitian variety.

 When $\omega$ is closed, simple integration by parts reveal that $v_-(\omega)=\int_X \omega^n$
is positive as soon as $\omega$ is positive at some point. Bounding from below
$v_-(\omega)$ is a much more delicate issue in general.
We show in Proposition \ref{pro:bddMA2}
that 
$\omega$ is uniformly non-collapsing if
one restricts to $\omega$-psh functions that are uniformly bounded by 
a fixed constant $M$:
$$
v_M^-(\omega):=\inf_{} \left\{ \int_X (\omega+dd^c u)^n  \; : \; 
    u \in\PSH(X,\omega)  \text{ with } -M \leq u \leq 0 \right\}>0.
$$
 
For non uniformly bounded functions we show the following:

\begin{thmA}
{\it 
The condition $v_+(\omega_X)<+\infty$ is independent of the choice of $\omega_X$;
it is moreover invariant under bimeromorphic change of coordinates.

The condition $v_-(\omega_X)>0$ is also independent of the choice of $\omega_X$
and invariant under bimeromorphic change of coordinates.

In particular these conditions both hold true
 if $X$ belongs to the Fujiki class.
}
\end{thmA}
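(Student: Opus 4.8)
The plan is to establish both invariance statements by comparing Monge--Amp\`ere volumes under (i) a change of Hermitian form on the same manifold, and (ii) a bimeromorphic morphism, and then to conclude the Fujiki case by reducing to the K\"ahler situation where closedness makes the volumes literally constant.

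For the independence of the choice of $\omega_X$: given two Hermitian forms $\omega_X, \omega_X'$ on $X$, by compactness there is a constant $C\geq 1$ with $C^{-1}\omega_X \leq \omega_X' \leq C\omega_X$. First I would observe that $\PSH(X,\omega_X)\cap L^\infty$ and $\PSH(X, C\omega_X)\cap L^\infty$ are comparable: if $\varphi$ is $\omega_X$-psh and bounded then $\varphi/C$ (appropriately rescaled) is $C\omega_X$-psh, and $(C\omega_X + dd^c(C\psi))^n = C^n(\omega_X+dd^c\psi)^n$, so $v_+(C\omega_X) = C^n v_+(\omega_X)$ and similarly for $v_-$; in particular finiteness/positivity is unaffected by scaling. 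Then I would use the fundamental monotonicity estimate: if $\omega_X \leq \omega_X'$ and $\varphi\in\PSH(X,\omega_X)\cap L^\infty$, one cannot directly compare $(\omega_X+dd^c\varphi)^n$ with $(\omega_X'+dd^c\varphi)^n$ pointwise, but one can pass through envelopes --- replace $\varphi$ by $P_{\omega_X'}(\varphi)$, the largest $\omega_X'$-psh function $\leq \varphi$ (or the relevant envelope construction from \cite{GL21a}, which is presumably recalled in the body of the paper). The key input is that on the contact set $\{P_{\omega_X'}(\varphi) = \varphi\}$ one has $(\omega_X' + dd^c P_{\omega_X'}(\varphi))^n \leq (\omega_X'+dd^c\varphi)^n$ in a suitable sense, and the envelope is supported there; running this together with the scaling estimate sandwiches $v_+(\omega_X)$ between multiples of $v_+(\omega_X')$, and dually for $v_-$. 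This gives the first two assertions.

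For bimeromorphic invariance: let $\pi\colon Y\to X$ be a bimeromorphic morphism of compact complex manifolds, $\omega_Y$ a Hermitian form on $Y$. Since $\pi^*\omega_X$ is a smooth semi-positive form on $Y$ that dominates a Hermitian form away from the exceptional locus $E$, I would choose constants so that $\pi^*\omega_X \leq \omega_Y$ and, conversely, $\omega_Y \leq \pi^*\omega_X + dd^c\rho + (\text{correction})$ fails globally but holds on $Y\setminus E$; the standard device is to write $\omega_Y \leq C\pi^*\omega_X + dd^c u$ is impossible near $E$, so instead one uses that $\pi_* \PSH(Y,\pi^*\omega_X)$ and $\PSH(X,\omega_X)$ correspond (pulling back $\omega_X$-psh functions, and pushing forward using that $E$ is pluripolar so bounded psh functions extend across it). Concretely: a bounded $\omega_X$-psh $\varphi$ pulls back to a bounded $\pi^*\omega_X$-psh $\pi^*\varphi$, and $\int_Y(\pi^*\omega_X + dd^c\pi^*\varphi)^n = \int_Y \pi^*\big((\omega_X+dd^c\varphi)^n\big) = \int_X(\omega_X+dd^c\varphi)^n$ because $\pi$ is a biholomorphism off a set of Lebesgue measure zero (and the Monge--Amp\`ere measures put no mass on the pluripolar set $E$, by the Bedford--Taylor theory). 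Conversely any bounded $\pi^*\omega_X$-psh $\psi$ descends: $\psi|_{Y\setminus E}$ is $\omega_X$-psh on $X\setminus\pi(E)$, bounded, hence extends to $\varphi\in\PSH(X,\omega_X)\cap L^\infty$ with $\pi^*\varphi = \psi$ off $E$, and the volumes agree as above. Thus $v_\pm(\pi^*\omega_X) = v_\pm(\omega_X)$, and combined with the first part ($v_\pm(\omega_Y)$ comparable to $v_\pm(\pi^*\omega_X)$ up to positive finite multiplicative constants depending on $C$), we get that finiteness of $v_+$ and positivity of $v_-$ are bimeromorphic invariants.

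Finally, if $X$ is in the Fujiki class, there is a bimeromorphic morphism $\pi\colon Y\to X$ with $Y$ K\"ahler; fix a K\"ahler form $\omega$ on $Y$. Then for every $\varphi\in\PSH(Y,\omega)\cap L^\infty$, Stokes' theorem gives $\int_Y(\omega+dd^c\varphi)^n = \int_Y\omega^n$, a fixed positive finite number, so $v_+(\omega) = v_-(\omega) = \int_Y\omega^n \in (0,+\infty)$. By the two invariance statements just proved, $v_+(\omega_X)<+\infty$ and $v_-(\omega_X)>0$ for any Hermitian form $\omega_X$ on $X$.

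\emph{Main obstacle.} The delicate point is the envelope comparison in the first part: pointwise inequalities between Monge--Amp\`ere measures of a function with respect to two different reference forms are false in general, so one genuinely needs the quasi-psh envelope machinery (contact-set analysis, orthogonality relations) to transfer volume bounds across $\omega_X \leftrightarrow \omega_X'$. A secondary technical care is needed in the bimeromorphic step to ensure bounded $\pi^*\omega_X$-psh functions extend across the exceptional divisor and that no Monge--Amp\`ere mass is lost there --- this rests on $E$ being pluripolar together with the boundedness of the potentials.
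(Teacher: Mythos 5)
Your Hermitian-metric-invariance step and the Fujiki reduction match the paper's approach (Proposition \ref{pro:bddsMA}, Corollary \ref{cor:Fujiki}): for $v_+$ the monotonicity $v_+(\omega_1)\leq v_+(\omega_2)$ when $\omega_1\leq\omega_2$ is in fact a \emph{direct} pointwise comparison of measures (contrary to what you claim, $(\omega_1+dd^c u)^n\leq(\omega_2+dd^c u)^n$ holds for any bounded $\omega_1$-psh $u$ by the binomial expansion), while for $v_-$ the envelope is needed --- but it should be $P_{\omega_1}(v)$ applied to a bounded $\omega_2$-psh $v$ (project onto the \emph{smaller} cone), not $P_{\omega_X'}(\varphi)$ with $\varphi$ already $\omega_X$-psh, which is a no-op.

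The genuine gap is in the bimeromorphic step. Your identity $v_{\pm}(\pi^*\omega_X)=v_{\pm}(\omega_X)$ is fine (bounded $\pi^*\omega_X$-psh functions are psh on the connected compact fibers, hence constant, hence descend). But you then claim $v_{\pm}(\omega_Y)$ is ``comparable to $v_{\pm}(\pi^*\omega_X)$ up to positive finite multiplicative constants,'' having just correctly observed that no estimate $\omega_Y\leq C\pi^*\omega_X$ can hold near $E$. Proposition \ref{pro:bddsMA} only gives the one-sided monotonicity $v_{\pm}(\pi^*\omega_X)\leq v_{\pm}(C\omega_Y)$, i.e.\ the implications $v_+(\omega_Y)<\infty\Rightarrow v_+(\omega_X)<\infty$ and $v_-(\omega_X)>0\Rightarrow v_-(\omega_Y)>0$; the two converse implications are precisely what is missing and what cannot be obtained this way. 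The paper's Theorem \ref{thm:bimerom} closes them by invoking a quasi-psh function $\psi$ with analytic singularities along $E$ such that $\pi^*\omega_X+dd^c\psi\geq\delta\omega_Y$ as \emph{currents} (this is \cite{BL70}, \cite[Proposition 3.2]{FT09}; you dismiss this as ``impossible'' when in fact it is exactly the device needed once one allows $\psi$ to have poles). Given any bounded $\omega_Y$-psh $\varphi$, the function $\psi+\varphi$ is then $\pi^*\omega_X$-psh, which after truncation descends to $X$ and bounds $\int_Y(\omega_Y+dd^c\varphi)^n$ by $v_+(\omega_X)$; dually, given $v$ bounded $\omega_X$-psh, the envelope $u=P_{\omega_Y}(v\circ\pi-\psi)$ combined with Lemma \ref{lem:Demkey} and Theorem \ref{thm:orthog} bounds $v_-(\omega_Y)$ from below by $\int_X(\omega_X+dd^c v)^n$. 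Without the singular potential $\psi$ your argument only proves half of the bimeromorphic invariance.
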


We are not aware of a single example of a compact complex manifold such that $v_+(\omega_X)=+\infty$
or $v_-(\omega_X)=0$. This is an important open problem.

\smallskip

The proof of Theorem A relies on a fine use of quasi-plurisubharmonic envelopes. These envelopes have been systematically studied in \cite{GLZ19} in the K\"ahler framework. 
Adapting and generalizing
 \cite{GLZ19} to this Hermitian setting, we prove in Section \ref{sec:envelopes} the following:

\begin{thmB}
{\it 
Let $\omega$ be a semi-positive $(1,1)$-form.
Given a Lebesgue measurable function $h:X \rightarrow \R$, we define the $\omega$-plurisubharmonic envelope of $h$ by
$
P_{\omega}(h) := \left(\sup \{ u \in \psh (X,\omega) \; :\;  u \leq h   \}\right)^*,
$
where the star means that we take the upper semi-continuous regularization.  
If $h$ is bounded   below, quasi-lower-semi-continuous, and $P_{\omega}(h) <+\infty$, then
\begin{itemize}
\item $P_{\omega}(h)$ is a bounded $\omega$-plurisubharmonic function;
\item $P_{\omega}(h) \leq h$ in $X \setminus P$, where $P$ is pluripolar;
\item $(\omega+dd^c P_{\omega}(h))^n$ is concentrated on the contact  set $\{ P_{\omega}(h)=h\}$.
\end{itemize}
}
\end{thmB}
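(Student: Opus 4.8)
The plan is to transplant to the Hermitian setting the envelope arguments of \cite{GLZ19}, the point being that the pluripotential tools involved are either purely local --- hence insensitive to $d\omega\ne0$ --- or have already been supplied globally in the previous sections. Write $\varphi:=P_\omega(h)$ and set $u:=\sup\{v\in\PSH(X,\omega):v\le h\}$, so $\varphi=u^*$. Since $\omega\ge0$ constants are $\omega$-psh, so (as $h$ is bounded below) the constant $\inf_Xh$ lies in the defining family and $u\ge\inf_Xh$. The hypothesis $\varphi<+\infty$ forces this family to be locally uniformly bounded above, hence $\varphi=u^*\in\PSH(X,\omega)$; being u.s.c.\ on the compact $X$ it is bounded above, which is the first item. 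For the second, $u\le h$ everywhere by definition, while $\{u<u^*\}$ is negligible --- a local statement (in a chart $\omega=dd^c\rho$ with $\rho$ smooth, $u+\rho$ is genuinely psh and the Bedford--Taylor theory of negligible sets applies) --- hence pluripolar; so $\varphi\le h$ off the pluripolar set $\{u<\varphi\}$.

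For the third item I would first treat $h$ lower semicontinuous (which covers $h$ continuous). Fix $x_0$ with $\varphi(x_0)<h(x_0)$; using that $h$ is l.s.c.\ and $\varphi$ u.s.c., choose a small coordinate ball $B\ni x_0$ on which $\omega=dd^c\rho$, with $\mathrm{osc}_{\overline B}\rho$ as small as we wish and $\sup_{\overline B}\varphi<\inf_{\overline B}h$. By the (local) Bedford--Taylor theory solve the homogeneous Dirichlet problem: $\tilde\varphi\in\PSH(B,\omega)\cap C^0(\overline B)$, $(\omega+dd^c\tilde\varphi)^n=0$ in $B$, $\tilde\varphi=\varphi$ on $\partial B$. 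The local comparison principle gives $\varphi\le\tilde\varphi\le\sup_{\partial B}\varphi+\mathrm{osc}_{\overline B}\rho$ on $B$, so shrinking $B$ we get $\varphi\le\tilde\varphi<h$ on $\overline B$. Gluing $\tilde\varphi$ on $B$ to $\varphi$ on $X\setminus B$ yields an $\omega$-psh $\hat\varphi\ge\varphi$ with $\hat\varphi=\tilde\varphi$ on $B$ and $\hat\varphi\le h$ off a pluripolar set; since $\hat\varphi$ is $\omega$-psh and $\le h$ off a pluripolar set, a standard perturbation (using again that $h$ is bounded below) gives $\hat\varphi\le u$ off a pluripolar set, hence $\hat\varphi\le u^*=\varphi$. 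Thus $\tilde\varphi=\varphi$ on $B$ and $(\omega+dd^c\varphi)^n=0$ near $x_0$; as $\{\varphi<h\}$ is open, $(\omega+dd^c\varphi)^n$ is carried by $\{\varphi=h\}$.

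For general quasi-l.s.c.\ $h$, I would approximate by continuous functions $h_j\le h$ with $h_j\nearrow h$ off a pluripolar set $N$ (a standard consequence of quasi-lower-semicontinuity). Put $\varphi_j:=P_\omega(h_j)\le\varphi$; these increase, and by the previous paragraph $(\omega+dd^c\varphi_j)^n$ is carried by $\{\varphi_j=h_j\}$. The whole statement then follows once one knows the stability identity $\psi:=(\lim_j\varphi_j)^*=\varphi$: indeed $(\omega+dd^c\varphi_j)^n\to(\omega+dd^c\varphi)^n$ weakly (monotone continuity of the Monge--Amp\`ere operator, a local fact), and each open set $V_j:=\{\varphi<h_j\}$ is contained in $\{\varphi_k<h_k\}$ for all $k\ge j$, so $(\omega+dd^c\varphi)^n(V_j)\le\liminf_k(\omega+dd^c\varphi_k)^n(V_j)=0$; since $\{\varphi<h\}\subseteq N\cup\bigcup_jV_j$ and bounded $\omega$-psh Monge--Amp\`ere measures put no mass on $N$, we conclude $(\omega+dd^c\varphi)^n(\{\varphi<h\})=0$, which with $\varphi\le h$ off a pluripolar set gives the claim.

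The main obstacle is the stability identity $\psi=\varphi$. One has $\psi\le\varphi$ trivially. Running the balayage argument of the second paragraph ``in capacity'' (the sets $\{\psi<h_j\}$ are open and sit inside $\{\varphi_k<h_k\}$ for $k\ge j$) shows that $(\omega+dd^c\psi)^n$ is carried by $\{\psi=h\}$; on that support one has $\psi=h\ge\varphi$ off a pluripolar set, i.e.\ $(\omega+dd^c\psi)^n(\{\psi<\varphi\})=0$, and the domination principle (Proposition \ref{pro:domination}) then forces $\psi\ge\varphi$, hence $\psi=\varphi$. The genuine work, besides this point, is to verify that the capacity-theoretic apparatus of \cite{GLZ19} --- quasi-continuity, monotone continuity of the Monge--Amp\`ere operator, the behaviour of the envelope $h\mapsto P_\omega(h)$ and of the capacity $\capo$ under the relevant limits --- carries over to the non-K\"ahler framework; the only global inputs needed there are precisely the comparison and domination principles established earlier in the paper.
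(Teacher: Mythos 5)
Your plan of ``transplanting'' the balayage argument of K\"ahler envelope theory goes wrong at its very first step. You ``choose a small coordinate ball $B\ni x_0$ on which $\omega=dd^c\rho$'', but $\omega$ here is a smooth semi-positive (in Step 1, Hermitian) $(1,1)$-form that is \emph{not} assumed $d$-closed, and a non-closed $(1,1)$-form admits no local $dd^c$-potential, however small the ball. This is not a technicality: all the local machinery you then invoke --- the Perron--Bremermann solution of the homogeneous Dirichlet problem $(\omega+dd^c\tilde\varphi)^n=0$ in $B$, and the clean local comparison giving $\varphi\le\tilde\varphi\le\sup_{\partial B}\varphi$ plus the oscillation of $\rho$ --- uses the potential $\rho$ to pass to genuine psh functions, and neither ingredient is available in that form when $d\omega\neq0$. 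In the Hermitian setting the comparison principle only holds in a modified form with multiplicative error terms (Theorem \ref{thm:pcpr}), and the estimate ``$\varphi\le\tilde\varphi<h$ on $\overline{B}$ after shrinking $B$'' no longer follows. Handling precisely this failure is the whole point of this part of the paper.

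The paper's proof sidesteps local balayage by a \emph{global} PDE regularization (its Step 1): for $h$ smooth and $\omega$ Hermitian, one solves $(\omega+dd^c\varphi_\beta)^n=e^{\beta(\varphi_\beta-h)}\omega^n$ on all of $X$ by \cite{TW10}, uses the uniform ${\mathcal C}^{1,1}$-estimates of Chu--Zhou \cite{CZ19} to get $\varphi_\beta\to P_\omega(h)$ uniformly as $\beta\to\infty$, and deduces orthogonality from $\int_{\{P_\omega(h)<h-2\varepsilon\}}(\omega+dd^c\varphi_\beta)^n\le e^{-\beta\varepsilon}\int_X\omega^n\to0$. Steps 2--3 then pass from smooth to continuous to l.s.c.\ to quasi-l.s.c.\ obstacles by monotone approximation (for the quasi-l.s.c.\ case, a \emph{decreasing} approximation $h_j\searrow h$ in capacity from \cite[Lemma 2.4]{GLZ19}, rather than your increasing one), and Step 4 passes from Hermitian to merely semi-positive $\omega$ by letting $\omega_j=\omega+j^{-1}\omega_X\searrow\omega$. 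Your sketch omits this last reduction; moreover the stability identity $\psi=\varphi$ you argue for via Proposition \ref{pro:domination} requires $\omega$ to be non-collapsing, which is automatic for Hermitian $\omega$ (Corollary \ref{cor:Bvsnoncollapsing}) but is not among the hypotheses of Theorem B, so the degenerate case cannot be handled without the approximation in $\omega$.
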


      \smallskip

      An influential conjecture of Grauert-Riemenschneider \cite{GR70} asked whether 
      the existence of a semi-positive holomorphic line bundle
      $L \rightarrow X$  with $c_1(L)^n>0$
         implies that $X$ is Moishezon
      (i.e. bimeromorphically equivalent to a projective manifold).
           This conjecture has been solved positively by Siu in \cite{Siu84} 
           (with complements by \cite{Siu85} and Demailly \cite{Dem85}).
           
      Demailly and P\u{a}un have proposed a transcendental version of this conjecture
      (see \cite[Conjecture 0.8]{DP04}): given a nef class $\alpha \in H^{1,1}_{BC}(X,\R)$
      with $\alpha^n>0$, they conjectured that $\alpha$ should contain a K\"ahler current,
      i.e. a positive closed $(1,1)$-current which dominates a Hermitian form.
      Recall that the Bott-Chern cohomology group $H_{BC}^{1,1}(X,\R)$
    is the quotient of   closed real smooth $(1,1)$-forms,
   by the image of  ${\mathcal C}^{\infty}(X,\R)$
   under the $dd^c$-operator.
      
      This influential conjecture has been further reinforced by Boucksom-Demailly-P\u{a}un-Peternell 
      who proposed a weak transcendental form of Demailly's holomorphic Morse inequalities \cite[Conjecture 10.1]{BDPP13}.
         This stronger conjecture has been solved recently by Witt-Nystr\"om when $X$ is projective \cite{WN19}.
         
  Building on works of Chiose \cite{Chi13}, Xiao \cite{Xiao15} and Popovici \cite{Pop16}
  we obtain the following answer to the qualitative part of these conjectures:

\begin{thmC}
{\it 
 
	Let $\alpha, \beta \in H^{1,1}_{BC}(X,\mathbb{C})$ be nef classes such that 
 	$
 	\alpha^n > n \alpha^{n-1} \cdot  \beta. 
 	$
 	The following properties are equivalent:
 	\begin{enumerate}
 	\item  $\alpha -\beta$ contains a K\"ahler current;
 	\item $v_+(\omega_X)<+\infty$;
 	\item $X$ belongs to the Fujiki class.
 	\end{enumerate}
}
\end{thmC}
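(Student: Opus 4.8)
The plan is to prove the chain of implications $(1)\Rightarrow(3)\Rightarrow(2)\Rightarrow(1)$. The implication $(3)\Rightarrow(2)$ is already available from the results quoted in the introduction: if $X$ is in the Fujiki class $\mathcal{C}$ then $v_+(\omega_X)<+\infty$. The implication $(1)\Rightarrow(3)$ is also essentially classical: if $\alpha-\beta$ contains a Kähler current $T$, i.e. a closed positive $(1,1)$-current with $T\geq\varepsilon\omega_X$ for some $\varepsilon>0$, then by Demailly's regularization theorem $\alpha-\beta$ contains a Kähler current with analytic singularities; resolving these singularities by a composition of blow-ups with smooth centers produces a modification $\pi\colon\tilde X\to X$ on which $\pi^*(\alpha-\beta)$ dominates a Kähler class minus an effective divisor, and a standard argument (going back to the proof of the Grauert–Riemenschneider conjecture, and formalized by Demailly–Păun) shows $\tilde X$, hence $X$, is in the Fujiki class. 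So the substance of the theorem is the implication $(2)\Rightarrow(1)$: assuming $v_+(\omega_X)<+\infty$, and given nef classes with $\alpha^n>n\,\alpha^{n-1}\cdot\beta$, produce a Kähler current in $\alpha-\beta$.

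For $(2)\Rightarrow(1)$ I would follow the Chiose–Xiao–Popovici strategy, which proceeds by contradiction combined with a mass-concentration argument. Suppose $\alpha-\beta$ contains no Kähler current. Fix Hermitian representatives: smooth forms $\omega_\alpha\in\alpha$, $\omega_\beta\in\beta$ that are positive (using nefness, after adding $\delta\omega_X$ and letting $\delta\to 0$ at the end). For $\e>0$ small, solve the complex Monge–Ampère equation
$$
(\omega_\alpha+\e\omega_X+dd^c\f_\e)^n = c_\e\,\omega_X^n, \qquad \sup_X\f_\e=0,
$$
where $c_\e>0$ is determined by integrating; the point is that the total mass $\int_X(\omega_\alpha+\e\omega_X+dd^c\f_\e)^n$ is uniformly bounded above by $v_+(\omega_X)$ applied to a fixed Hermitian form dominating all $\omega_\alpha+\e\omega_X$, and bounded below because $\omega_\alpha$ is big (its top self-intersection $\alpha^n$ being positive, which follows from $\alpha^n>n\alpha^{n-1}\cdot\beta\geq 0$ by nefness of $\beta$). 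One then tests the equation against $\omega_\beta$: writing $\theta_\e:=\omega_\alpha+\e\omega_X+dd^c\f_\e$, integration by parts (controlled using the non-closedness terms, which is where one must be careful but which only produce lower-order contributions in $\e$) gives a comparison between $\int_X\theta_\e^n$ and $n\int_X\theta_\e^{n-1}\wedge\omega_\beta$. The hypothesis $\alpha^n>n\alpha^{n-1}\cdot\beta$ forces, in the limit $\e\to 0$, that the potentials $\f_\e$ cannot stay uniformly bounded — otherwise $\omega_\alpha$ itself would already carry enough positivity to build the Kähler current directly. Hence $\f_\e$ concentrates: after normalization, the measures $\theta_\e^n/\int\theta_\e^n$ concentrate mass on a proper analytic (or at least small) subset, and extracting a weak limit of $\theta_\e$ produces a closed positive current in $\alpha$ which, combined with the strict inequality, can be modified to dominate $\omega_\beta+\e_0\omega_X$ for some fixed $\e_0>0$ — i.e. a Kähler current in $\alpha-\beta$, contradicting our assumption.

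The technical heart — and the step I expect to be the main obstacle — is making the integration-by-parts/mass-concentration argument work in the genuinely non-Kähler setting, where $d\omega_X\neq 0$ produces error terms involving $d\omega_X$, $dd^c\omega_X$, $d\omega_X\wedge d^c\omega_X$. These must be absorbed, and this is precisely where the hypotheses behind condition $(B)$ and the finiteness $v_+(\omega_X)<+\infty$ are used to keep all auxiliary volumes under control; without a uniform a priori bound on $\int_X\theta_\e^n$ the contradiction argument collapses. A secondary subtlety is the passage from "a Kähler current exists" to "$X$ is Fujiki": one needs Demailly regularization to replace the current by one with analytic singularities and then resolve, which is standard but must be invoked carefully in the compact complex (non-projective) category. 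I would organize the write-up so that the a priori volume bounds proved earlier in the paper (Proposition on $v_M^-$, condition $(B)$ $\Rightarrow$ non-collapsing) feed directly into the lower bound for $\int_X\theta_\e^n$, and so that Theorem A's bimeromorphic invariance is used to reduce, if convenient, to a smooth Kähler model after the final blow-up.
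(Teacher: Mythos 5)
Your overall architecture is right — the chain $(1)\Rightarrow(3)\Rightarrow(2)\Rightarrow(1)$, with $(1)\Rightarrow(3)$ by Demailly regularization plus resolution and $(3)\Rightarrow(2)$ by Corollary~\ref{cor:Fujiki} — and you correctly identify $(2)\Rightarrow(1)$ as the heart of the theorem. But the way you set up that implication diverges from the paper at the crucial step, and as written it has a genuine gap.

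The paper (Theorem~\ref{thm:morse}) does \emph{not} solve a Monge--Amp\`ere equation with a fixed background density $c_\e\omega_X^n$ and then try to extract positivity from a weak limit of $\theta_\e$. Instead it runs Lamari's Hahn--Banach criterion first: the assumed non-existence of a K\"ahler current in $\alpha-\beta$ produces a sequence of \emph{Gauduchon} metrics $\eta_\e$ with
$\int_X(\omega_\e-\omega'_\e)\wedge\eta_\e^{n-1}\leq\e\int_X\omega'_\e\wedge\eta_\e^{n-1}$,
normalized so that $\int_X\omega'_\e\wedge\eta_\e^{n-1}=1$. The Monge--Amp\`ere equation is then solved with right-hand side $c_\e\,\omega'_\e\wedge\eta_\e^{n-1}$, not $c_\e\omega_X^n$. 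This choice is what makes Popovici's pointwise Cauchy--Schwarz inequality (Lemma~\ref{lem: CS Popovici}) applicable with $\theta_1=\omega_\e+dd^cu_\e$, $\theta_2=c_\e\omega'_\e$, $\theta_3=\eta_\e$, producing
$\bigl(\int_X(\omega_\e+dd^cu_\e)\wedge\eta_\e^{n-1}\bigr)\bigl(\int_X(\omega_\e+dd^cu_\e)^{n-1}\wedge\omega'_\e\bigr)\geq c_\e/n$.
The Gauduchon property then kills the $dd^cu_\e$ term in the first factor exactly (no error terms to absorb), and the Lamari inequality bounds that factor by $1+\e$. Finally, the hypothesis $v_+(\omega_X)<+\infty$ is used — through elementary expansion and Stokes, not through any absorption of curvature terms — to show $\int_X(\omega_\e+dd^cu_\e)^n\to\alpha^n$ and $\int_X(\omega_\e+dd^cu_\e)^{n-1}\wedge\omega'_\e\to\alpha^{n-1}\cdot\beta$, giving the immediate numerical contradiction $n\alpha^{n-1}\cdot\beta\geq\alpha^n$.

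Your sketch never actually closes the loop: the claim that ``$\f_\e$ concentrates, and extracting a weak limit of $\theta_\e$ produces a current dominating $\omega_\beta+\e_0\omega_X$'' is not justified and is not what happens — no weak limit of $\theta_\e$ is taken anywhere, and there is no a priori reason the limit current would dominate anything. The mass-concentration picture belongs to the original Demailly--P\u{a}un proof in the K\"ahler case; the Chiose--Xiao--Popovici route, which you name but do not follow, replaces it by the Hahn--Banach/Gauduchon criterion together with Popovici's inequality precisely to avoid that step. You are also worried about controlling $d\omega_X$, $dd^c\omega_X$ error terms and invoke condition~$(B)$ and $v_M^-$, but none of that enters this proof: the Gauduchon property makes the relevant integration by parts exact, and the only a priori bound used is $v_+(\omega_X)<+\infty$. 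To repair the proposal, replace the fixed right-hand side by the Gauduchon-dependent one, state and use Popovici's pointwise inequality, and drop the weak-limit/mass-concentration argument in favor of the direct numerical contradiction.
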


 A consequence of our analysis is that the conjectures of Demailly-P\u{a}un and
 Boucksom-Demailly-P\u{a}un-Peternell can be extended to non closed forms,
 making sense outside the Fujiki class. Progresses in the theory of complex Monge-Amp\`ere equations on compact hermitian manifolds
  have indeed shown  that it is useful to consider
  $dd^c$-perturbations of non closed nef forms.
  It is therefore natural to try and consider an extension of 
  Theorem C. These are the contents of Theorem \ref{thm:dempaun} (when $\beta=0$) and
     Theorem \ref{thm:morse2} (when $\beta \neq 0$).

\begin{ackn} 
We thank Daniele Angella for several useful discussions. 
We thank the referee for a very detailed 
reading   and   numerous useful comments. 
This work has benefited from State aid managed by the   ANR-11-LABX-0040
(research project HERMETIC).
 The authors are also partially supported by the ANR project PARAPLUI.
\end{ackn}

\section{Non collapsing forms}

In the whole article we let $X$ denote a compact complex manifold of complex dimension $n \geq 1$,
and we fix $\omega$ a smooth semi-positive $(1,1)$-form on $X$.

\subsection{Positivity properties}

 \subsubsection{Monge-Amp\`ere operators}

  A function is quasi-plurisub\-harmonic (quasi-psh for short)
  if it is locally given as the sum of  a smooth and a psh function.   
  
 Given an open set $U\subseteq X$, quasi-psh functions
$\f:U \rightarrow \R \cup \{-\infty\}$ satisfying
$
\omega_{\f}:=\omega+dd^c \f \geq 0
$
in the weak sense of currents are called $\omega$-psh functions on $U$. 
Constant functions are $\omega$-psh functions since $\omega$ is semi-positive.
A ${\mathcal C}^2$-smooth function $u\in \mathcal{C}^2(X)$ has bounded Hessian, hence $\e u$ is
$\omega$-psh on $X$ if $0<\e$ is small enough and $\omega$ is  positive (i.e. Hermitian).

\begin{defi}
We let $\PSH(X,\omega)$ denote the set of all $\omega$-plurisubharmonic
 functions which are not identically $-\infty$.  
\end{defi}

The set $\PSH(X,\omega)$ is a closed subset of $L^1(X)$, 
for the $L^1$-topology.
We refer the reader to \cite{Dem12,GZbook,Din16} for basic
properties of $\omega$-psh functions.

\smallskip

We briefly explain how to adapt \cite{BT82} to the Hermitian context, and
refer the reader to \cite{DK12} for a more systematic treatment. 
Let $u$ be a bounded quasi-psh function on $X$, and fix a Hermitian form 
$\omega_X$ such that $\omega_X +dd^c u\geq 0$. Let $U$ be an open subset of $X$ such that 
$\omega_X \leq \beta = dd^c \rho$ for some smooth function $\rho$ in $U$. 
Since $u$ is $\beta$-psh in $U$, 
 $(\beta+dd^c u)^p$  is a well defined closed positive current for $1\leq p\leq k$,
  as follows from Bedford-Taylor's theory. We can thus set
\[
(dd^c u)^k := \sum_{p=0}^k (-1)^p \binom{k}{p} (\beta+dd^c u)^{k-p} \wedge \beta^{p}.  
\]
This is a closed current of bidegree $(k,k)$. 
We claim that $(dd^c u)^k$  
does not depend on the choice of $\beta$. 
To see this, assume $\gamma$ is another K\"ahler form in $U$ that dominates  $\omega_X$ in $U$.
By Demailly's regularization result \cite{Dem92}, we can find  a sequence $(u_j)$
of smooth $\omega_X$-psh functions decreasing to $u$. Then 
\[
\sum_{p=0}^k (-1)^p \binom{k}{p} (\beta+dd^c u_j)^{k-p} \wedge \beta^{p} = (dd^c u_j)^k= \sum_{p=0}^k (-1)^p \binom{k}{p} (\gamma+dd^c u_j)^{k-p} \wedge \gamma^{p}. 
\]
Letting $j\to +\infty$, Bedford-Taylor's convergence theorem \cite{BT82}
ensures that 
\[
\sum_{p=0}^k (-1)^p \binom{k}{p} (\beta+dd^c u)^{k-p} \wedge \beta^{p} = \sum_{p=0}^k (-1)^p \binom{k}{p} (\gamma+dd^c u)^{k-p} \wedge \gamma^{p},
\]
proving the claim.  
Covering $X$ by such small open sets $U_{j}$,
we thus obtain a closed $(k,k)$-current $(dd^c u)^k$ globally defined on $X$. In particular, $(dd^c u)^n$ is a signed measure on $X$ with total mass $0$.

For $u \in \PSH(X,\omega)\cap L^{\infty}(X)$ and $1\leq k \leq n$, we define 
\[
(\omega+dd^c u)^k:= \sum_{p=0}^k \binom{k}{p} (dd^c u)^p \wedge \omega^{k-p}. 
\]
This current is positive.   Indeed when $\omega>0$ is Hermitian we 
can approximate $u$ by smooth $\omega$-psh functions $u_j$
 as above and obtain that $(\omega+dd^c u)^k$ is the limit of the smooth positive forms $(\omega+dd^c u_j)^k$.
The general case follows by approximating $\omega$ by $\omega+\e \omega_X$ and letting  $\varepsilon \to 0$. In particular, $(\omega+dd^c u)^n$ is a positive Radon measure on $X$.

\smallskip

If $u,v$ are bounded $\omega$-psh functions, we define similarly the mixed products
\[
(dd^c u)^k \wedge (dd^c v)^l := \sum_{0\leq p\leq k} \sum_{0\leq q\leq l} (-1)^{p+q} \binom{k}{p}\binom{l}{q} \beta_u^{k-p}\wedge \beta_v^{l-q} \wedge \beta^{k+l-p-q},
\]
where $\beta_u:=\beta+dd^c u$, and $\beta$ is a K\"ahler form that dominates $\omega$ in   $U$. 

The mixed Monge-Amp\`ere measures 
\[
\omega_u^j\wedge \omega_v^{n-j} := (\omega+dd^c u)^j \wedge (\omega+dd^c v)^{n-j}
\]
 are thus similarly well defined as positive Radon measures for any $0 \leq j \leq n$, and any bounded $\omega$-psh functions $u,v$.
 
 \subsubsection{Maximum principle}

We will need the following   maximum principle.

\begin{lemma}\label{lem:Demkey}
	Let $u,v, \varphi$ be bounded $\omega$-psh functions in $U\subseteq X$.  
	For all $1\leq j\leq n$, 
	\begin{equation}\label{eq:maxprin}
	{\bf 1}_{\{u>v\}}\omega_{\max(u,v)}^j \wedge \omega_{\varphi}^{n-j} = {\bf 1}_{\{u>v\}} \omega_u^j \wedge \omega_{\varphi}^{n-j},
	\end{equation}
	and
	\begin{equation}\label{eq:Dem inequality}
		\omega_{\max(u,v)}^j \wedge \omega_{\varphi}^{n-j} 
		 \geq {\bf 1}_{\{u>v\}}\omega_u^j \wedge \omega_{\varphi}^{n-j}
		+ {\bf 1}_{\{u\leq v\}}\omega_v^j \wedge \omega_{\varphi}^{n-j}.
	\end{equation}
	If moreover $u\geq v$, then $ {\bf 1}_{\{u= v\}}\omega_u^j \wedge \omega_{\varphi}^{n-j}  \geq {\bf 1}_{\{u=v\}}\omega_v^j \wedge \omega_{\varphi}^{n-j}$.
\end{lemma}
\begin{proof}
	We prove \eqref{eq:maxprin}. Since it is a local property, we can assume that there is a K\"ahler form $\beta$ in $U$ such that $\beta\geq  \omega$. We write $\beta_u$ for $\beta +dd^c u$. By definition, the left-hand side is equal to 
	\[
	{\bf 1}_{\{u>v\}} \sum_{p,q}  \binom{j}{p} \binom{n-j}{q}(-1)^{n-p-q} \beta _{\max(u,v)}^{j-p} \wedge \beta_{\varphi}^{n-j-q}\wedge  (\beta-\omega)^{p+q}. 
	\]
	The result thus follows from the maximum principle of Bedford-Taylor: we can use  \cite[Theorem 3.27]{GZbook} for $T= \beta_{\varphi}^{n-j-q}$
	(in \cite[Theorem 3.27]{GZbook}, the current $T$ is of bidegree $(n-p,n-p)$ but the same proof works here without modification). 
	
	To prove \eqref{eq:Dem inequality} we apply \eqref{eq:maxprin} to $\max(u,v+\varepsilon)$:
	\begin{flalign*}
	\omega_{\max(u,v+\varepsilon)}^j \wedge \omega_{\varphi}^{n-j} & \geq {\bf 1}_{\{u>v+\varepsilon\}}\omega_u^j \wedge \omega_{\varphi}^{n-j}
		+ {\bf 1}_{\{v +\varepsilon>u\}}\omega_v^j \wedge \omega_{\varphi}^{n-j}\\
		&\geq  {\bf 1}_{\{u>v+\varepsilon\}}\omega_u^j \wedge \omega_{\varphi}^{n-j}
		+ {\bf 1}_{\{v \geq u\}}\omega_v^j \wedge \omega_{\varphi}^{n-j}.
	\end{flalign*}
	Since the sets $\{u>v+\varepsilon\}$ increase to $\{u>v\}$ as $\varepsilon \searrow 0$, letting $\varepsilon \to 0$ we obtain the second statement.

	 The last statement follows by multiplying \eqref{eq:Dem inequality} with ${\bf 1}_{\{u=v\}}$. 
\end{proof}

\subsubsection{Condition (B) and non-collapsing}
We  always assume in this article that   $\int_X \omega^n>0$. 
 On a few occasions we will need to assume  positivity properties
 that are possibly slightly stronger:

\begin{defi}
We say   $\omega$ satisfies condition (B) if there exists
 $B \geq 0$ such that
 $$
 -B \omega^2 \leq dd^c \omega \leq B \omega^2
 \; \; \text{ and } \; \;
 -B \omega^3 \leq d \omega \wedge d^c \omega \leq B \omega^3.
 $$
\end{defi}

Here are three different contexts where this condition is satisfied:
\begin{itemize}
\item any Hermitian metric $\omega>0$ satisfies condition (B);
\item if $\pi:X \rightarrow Y$ is a desingularization of a singular compact complex variety $Y$
and  $\omega_Y$ is a Hermitian metric, 
then $\omega=\pi^*\omega_Y$ satisfies condition (B);
\item if $\omega$ is semi-positive and closed, then it satisfies condition (B).
\end{itemize}
Combining these, one obtains further settings where condition (B) is satisfied.

\begin{defi}
We say   $\omega$ is non-collapsing if 
 for any bounded $\omega$-psh function, the complex Monge-Amp\`ere 
 measure $(\omega+dd^c u)^n$ has positive mass: $\int_X \omega_u^n>0$.
\end{defi}

We shall   see in Corollary \ref{cor:Bvsnoncollapsing} below  that condition (B) implies non-collapsing.

 \subsection{Comparison principle}

 The comparison principle plays a central role in K\"ahler pluripotential theory.
 Its proof breaks down in the Hermitian setting, as it heavily relies on the closedness of 
 the reference form $\omega$ through the preservation of Monge-Amp\`ere masses. 
 In that context the following 
 ``modified comparison principle'' has been established by Ko{\l}odziej-Nguyen 
 \cite[Theorem 0.2]{KN15}:
 
 \begin{theorem} \label{thm:pcpr}
 Assume $\omega$ satisfies condition $(B)$ and let $u,v$ be bounded $\omega$-psh functions. 
 For $\lambda \in (0,1)$ we set $m_{\lambda}=\inf_X \{ u-(1-\lambda) v \}$.
 Then 
 $$
 \left( 1-\frac{4B(n-1)^2s}{\lambda^3} \right)^n \int_{ \{u<(1-\lambda)v+m_{\lambda}+s\} } \omega^n_{(1-\lambda)v}
 \leq \int_{ \{u<(1-\lambda)v+m_{\lambda}+s\} } \omega^n_{u}.
 $$
 for all $0<s< \frac{\lambda^3}{32B(n-1)^2}$.
 \end{theorem}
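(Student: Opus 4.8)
\emph{Set-up.} Write $U(s):=\{u<(1-\lambda)v+m_\lambda+s\}$ and $w:=(1-\lambda)v$, so that $\omega_w:=\omega+dd^c w=\lambda\omega+(1-\lambda)\omega_v\geq\lambda\omega$; expanding $\omega_w=\lambda\omega+(1-\lambda)\omega_v$, Bedford--Taylor theory gives $\omega^2\wedge S\leq\lambda^{-2}\,\omega_w^2\wedge S$ and $\omega^3\wedge S\leq\lambda^{-3}\,\omega_w^3\wedge S$ for every positive current $S$ with bounded local potentials. Fix $0<\delta<s$, put $w':=w+m_\lambda+s-\delta$ and $\varphi:=\max(u,w')\in\PSH(X,\omega)\cap L^\infty(X)$. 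Since $u-w\geq m_\lambda$ on $X$, one has $u\leq\varphi\leq u+(s-\delta)$, the open set $\{\varphi>u\}$ equals $V:=\{u<w'\}\subseteq U(s)$, and $\varphi=w'$ on $V$, hence $\omega_\varphi=\omega_w$ there; moreover $V\uparrow U(s)$ as $\delta\downarrow 0$.

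\emph{A one-step comparison.} Let $T$ be any $(n-1,n-1)$-current obtained as a wedge product of copies of $\omega_w$ and $\omega_u$; it is positive with bounded potentials. From $\omega_\varphi\wedge T-\omega_u\wedge T=dd^c(\varphi-u)\wedge T$ and Stokes' formula on the compact manifold $X$ (after the standard Bedford--Taylor regularisation) we get $\int_X\omega_\varphi\wedge T-\int_X\omega_u\wedge T=\int_X(\varphi-u)\,dd^c T$; since $0\leq\varphi-u\leq s-\delta$ and $\varphi-u$ vanishes off $V$, the right-hand side is $\leq(s-\delta)\int_V(dd^cT)^{+}$, where $(dd^cT)^{+}$ denotes any positive $(n,n)$-current dominating $\pm dd^cT$. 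On the other hand $\omega_\varphi=\omega_w$ on the open set $\{u<w'\}$ and $\omega_\varphi=\omega_u$ on $\{u>w'\}$, while the level set $\{u=w'\}$ is negligible for the measures $\omega_w^i\wedge\omega_u^{n-i}$ for all but countably many $\delta$ (alternatively its contribution has the right sign by Lemma~\ref{lem:Demkey}), so that $\int_X\omega_\varphi\wedge T-\int_X\omega_u\wedge T=\int_V(\omega_w-\omega_u)\wedge T$. Comparing the two expressions yields
\[
\int_V\omega_w\wedge T\ \leq\ \int_V\omega_u\wedge T+(s-\delta)\int_V(dd^cT)^{+}.
\]

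\emph{Estimating $dd^c T$ and iterating.} The decisive point is that the potentials drop out under differentiation: $d\omega_w=d\omega_u=d\omega$ and $dd^c\omega_w=dd^c\omega_u=dd^c\omega$. Hence, for $T_k:=\omega_w^{k-1}\wedge\omega_u^{n-k}$, every term of $dd^cT_k$ is of the form $dd^c\omega\wedge(\text{wedge of }n-2\text{ copies of }\omega_w,\omega_u)$ or $d\omega\wedge d^c\omega\wedge(\text{wedge of }n-3\text{ copies})$; condition~(B) replaces $\pm dd^c\omega$ by $B\omega^2$ and $\pm d\omega\wedge d^c\omega$ by $B\omega^3$, and then $\omega^2\wedge S\leq\lambda^{-2}\omega_w^2\wedge S$, $\omega^3\wedge S\leq\lambda^{-3}\omega_w^3\wedge S$ give, with $Y_j:=\int_V\omega_w^j\wedge\omega_u^{n-j}$ and $c:=(n-1)^2B(s-\delta)/\lambda^3$,
\[
(s-\delta)\int_V(dd^cT_k)^{+}\ \leq\ c\,(Y_k+Y_{k+1}+Y_{k+2}),
\]
the relevant multiplicities $(k-1)^2$, $(n-k)(2k-1)$, $(n-k)(n-k-1)$ being each $\leq(n-1)^2$ for $1\leq k\leq n$. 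Applying the one-step inequality with $T=T_k$ for $k=n,n-1,\dots,1$ ($Y_{n+1}=Y_{n+2}=0$) gives $Y_k\leq Y_{k-1}+c(Y_k+Y_{k+1}+Y_{k+2})$; since $s<\lambda^3/(32B(n-1)^2)$ we have $c<\tfrac1{32}$, and an elementary downward induction shows $Y_k\leq(1+4c)\,Y_{k-1}$, whence $Y_n\leq(1+4c)^{n}Y_0$, i.e. $(1-4c)^{n}\int_V\omega_w^n\leq\int_V\omega_u^n\leq\int_{U(s)}\omega_u^n$. Letting $\delta\downarrow 0$ and using $(1-4c)^{n}\geq\bigl(1-\tfrac{4B(n-1)^2s}{\lambda^3}\bigr)^{n}$ together with monotone convergence $\int_{V}\omega_w^n\uparrow\int_{U(s)}\omega_w^n$ gives the claim.

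\emph{Main difficulty.} The genuinely delicate part is the passage from the one-step inequality to the clean bound: one must carry out precisely the combinatorial count of the terms of $dd^c(\omega_w^{k-1}\wedge\omega_u^{n-k})$, checking that each of the coefficients of $Y_k,Y_{k+1},Y_{k+2}$ is really $\leq(n-1)^2$, and then resolve the coupled inequalities for the $Y_k$; it is exactly the numerical threshold $s<\lambda^3/(32B(n-1)^2)$ that makes the constant $4B(n-1)^2/\lambda^3$ come out. By contrast the envelope reduction $\varphi=\max(u,w')$, the integration by parts, and the ``$d,dd^c$ kill the potentials'' mechanism are routine once the set-up is fixed.
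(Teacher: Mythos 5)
Your argument is correct and follows the same strategy as the paper's proof; the differences are cosmetic rather than structural. Both proofs integrate $dd^c(\varphi-u)$ against mixed wedge products, observe that $dd^c$ annihilates all potentials so only $dd^c\omega$ and $d\omega\wedge d^c\omega$ survive, invoke condition $(B)$, use $\omega_{(1-\lambda)v}\geq\lambda\omega$, and run a linear recursion between mixed Monge--Amp\`ere masses. Your $Y_j=\int_V\omega_{(1-\lambda)v}^j\wedge\omega_u^{n-j}$ is precisely (after reindexing $j\leftrightarrow n-k$) the paper's $\int_{U_{\lambda,s}}\omega_u^{k}\wedge\omega_\phi^{n-k}$, since $\omega_\phi=\omega_{(1-\lambda)v}$ plurifinely on $U_{\lambda,s}$; and your recursion $Y_k\leq(1+4c)Y_{k-1}$ is the paper's $(1-4a)T_k\leq T_{k+1}$ in disguise. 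The two genuine departures are that (i) you introduce a small $\delta$-shift of the cut-off level so that, for all but countably many $\delta$, the level set $\{u=w'\}$ is negligible for each $\omega_{(1-\lambda)v}^i\wedge\omega_u^{n-i}$ (which, since $\omega_u\wedge T_k$ is exactly one of these, is precisely what your identity $\int_X(\omega_\varphi-\omega_u)\wedge T=\int_V(\omega_w-\omega_u)\wedge T$ requires), whereas the paper absorbs the contact-set contribution once and for all via Lemma~\ref{lem:Demkey}; and (ii) you iterate downward in the $\omega_w$-exponent while the paper iterates upward in the $\omega_u$-exponent. Your combinatorial bookkeeping is correct: the coefficients $(k-1)^2$, $(n-k)(2k-1)$, $(n-k)(n-k-1)$ are all $\leq(n-1)^2$, and the elementary downward induction using $c<1/32$ does close (one checks $t+t^2+t^3\leq 4$ with $t=1+4c$). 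Your ``alternatively its contribution has the right sign by Lemma~\ref{lem:Demkey}'' is the one place that would need care if you dropped the $\delta$-trick, since that Lemma as stated only compares $\omega_\varphi^j\wedge\omega_\psi^{n-j}$ with $\omega_\psi^n$ and your $T$ involves $\omega_w$; but since you do use the $\delta$-trick, the proof as written is fine.
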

 
 The   proof by Ko{\l}odziej-Nguyen relies on the main result of \cite{DK12}, together
 with  extra fine estimates.
 We propose here a simplified proof.
 
 \begin{proof}
 Set $\phi:= \max(u, (1-\lambda) v+ m_{\lambda} +s)$, $U_{\lambda,s}:= \{u< (1-\lambda)v + m_{\lambda}+s\}$. 
 For $0\leq k\leq n$ we set $T_k:= \omega_{u}^k\wedge \omega_{\phi}^{n-k}$, and $T_l=0$ if $l<0$.  
 Set $a= Bs\lambda^{-3}(n-1)^{2}$.
 We are going to prove by induction on $k=0,1,...,n-1$ that
\begin{equation}\label{eq: induction}
(1-4a)\int_{U_{\lambda,s}}T_k  \leq  \int_{U_{\lambda,s}}T_{k+1}. 
\end{equation}
The conclusion follows since $(\omega_{\phi})^n=(\omega_{(1-\lambda)v})^n$ in 
the plurifine open set $U_{\lambda,s}$.

\smallskip

We first prove \eqref{eq: induction} for $k=0$. 
Since $u\leq \phi$,  Lemma \ref{lem:Demkey} ensures that
$$
{\bf 1}_{\{u=\phi\}} \omega_{\phi}^{n} \geq {\bf 1}_{\{u=\phi\}} \omega_u\wedge \omega_{\phi}^{n-1}.
$$
 Observing that $U_{\lambda,s}= \{u<\phi\}$ we infer
\[
 \int_X dd^c (\phi-u) \wedge \omega_{\phi}^{n-1}=
\int_X (\omega_{\phi} - \omega_{u}) \wedge \omega_{\phi}^{n-1}\geq  \int_{U_{\lambda,s}} \omega_{\phi}^n -  \int_{U_{\lambda,s}} \omega_u \wedge \omega_{\phi}^{n-1}. 
\]
A direct computation shows that
\begin{eqnarray*}
dd^c \omega_{\phi}^{n-1} &= &(n-1)dd^c \omega \wedge \omega_{\phi}^{n-2}+
(n-1)(n-2) d \omega \wedge  d^c \omega \wedge \omega_{\phi}^{n-3} \\
&\leq &(n-1)B \omega^2 \wedge \omega_{\phi}^{n-2}+
(n-1)(n-2) B \omega^3 \wedge \omega_{\phi}^{n-3},
\end{eqnarray*}
since $\omega$ satisfies condition (B).
As $\phi-u \geq 0$, it follows from Stokes theorem that
$$
 \int_X dd^c (\phi-u) \wedge \omega_{\phi}^{n-1} \leq 
 (n-1)B \left\{ \int_X (\phi-u) \omega^2 \wedge \omega_{\phi}^{n-2}+
(n-2)  \int_X (\phi-u) \omega^3 \wedge \omega_{\phi}^{n-3} \right\}.
$$
Observe that
\begin{itemize}
\item  $\lambda \omega \leq \omega_{(1-\lambda)v}$ hence 
$\omega^{j} \wedge \omega_{\phi}^{n-j} \leq \lambda^{-j} (\omega_{(1-\lambda)v})^j \wedge \omega_{\phi}^{n-j} $,
\item $ (\omega_{(1-\lambda)v})^{j}  \wedge \omega_{\phi}^{n-j} = \omega_{\phi}^n$
in the plurifine open set $U_{\lambda,s}$,
\item $0\leq \phi-u \leq s$ and $\phi-u=0$ on $X\setminus U_{\lambda,s}$,
\end{itemize}
to conclude that $\int_X (\phi-u) \omega^j \wedge \omega_{\phi}^{n-j} \leq s \lambda^{-j} \int_{U_{\lambda,s}} \omega_{\phi}^n$,
for $j=2,3$, 
hence
\[
\int_{U_{\lambda,s}} \omega_{\phi}^n - \int_{U_{\lambda,s}} \omega_{u} \wedge \omega_{\phi}^{n-1}  \leq 
 \int_X dd^c (\phi-u) \wedge \omega_{\phi}^{n-1} 
\leq \frac{Bs(n-1)^2}{\lambda^3}\int_{U_{\lambda,s}} \omega_{\phi}^n,
\]
since $\lambda^{-2} \leq \lambda^{-3}$.
This yields \eqref{eq: induction} for $k=0$.  

\smallskip

We asume now that \eqref{eq: induction} holds for all $j \leq k-1$, and we check that it still holds for $k$.  
Observe that
\begin{eqnarray*}
\lefteqn{dd^c \left( \omega_u^k \wedge \omega_{\phi}^{n-[k+1]} \right)  } \\
& =&  k dd^c \omega \wedge \omega_u^{k-1} \wedge \omega_{\phi}^{n-[k+1]}
+(n-[k+1]) dd^c \omega \wedge \omega_u^k \wedge \omega_{\phi}^{n-[k+2]} \\
&+& 2 k(n-[k+1]) d \omega \wedge d^c \omega \wedge \omega_u^{k-1} \wedge \omega_{\phi}^{n-[k+2]} \\
&+&k(k-1) d \omega \wedge d^c \omega \wedge \omega_u^{k-2} \wedge \omega_{\phi}^{n-[k+1]} \\
&+&  (n-[k+1])[n-(k+2)] d \omega \wedge d^c \omega \wedge \omega_u^{k} \wedge \omega_{\phi}^{n-[k+3]}.
\end{eqnarray*}
The same arguments as above therefore show that 
\begin{flalign*}
 \int_{U_{\lambda,s}} (T_k&-T_{k+1}) \leq  \int_X (T_k-T_{k+1}) = \int_X (\phi-u) dd^c (\omega_{u}^{k}\wedge \omega_{\phi}^{n-[k+1]}) \\
&\leq \frac{Bs}{\lambda^3} \int_{U_{\lambda,s}} \left(k(k-1)T_{k-2}+2k[n-k]T_{k-1}+(n-[k+1])^2 T_{k} \right) \\
&\leq a \left ( \frac{1}{(1-4a)^2} + \frac{1}{1-4a}+1 \right ) \int_{U_{\lambda,s}}T_k
\leq 4a   \int_{U_{\lambda,s}} T_k,
\end{flalign*}
where in the third inequality above we have used the induction hypothesis,
while the fourth inequality follows from the upper bound $4a<1/8$.
 From this we obtain \eqref{eq: induction} for $k$, finishing the proof. 
 \end{proof}

 \begin{coro} \label{cor:Bvsnoncollapsing}
 	If $\omega$ satisfies condition $(B)$	then $\omega$ is non-collapsing. 
 \end{coro}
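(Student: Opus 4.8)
The plan is to deduce the corollary directly from the modified comparison principle, Theorem \ref{thm:pcpr}, by a contradiction argument. Suppose $\omega$ satisfies condition $(B)$ but is \emph{not} non-collapsing: then there is a bounded $\omega$-psh function $u$ with $\int_X \omega_u^n = 0$. Since $\int_X \omega^n > 0$, the zero function is a bounded $\omega$-psh function with $\int_X \omega^n > 0$, so $u$ is certainly not the ``maximal'' competitor. The idea is to play $u$ against the constant $v \equiv 0$ (or equivalently against the candidate of maximal mass) in Theorem \ref{thm:pcpr}, and show that the inequality there forces a contradiction.

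Concretely, I would apply Theorem \ref{thm:pcpr} with $v \equiv 0$. Then $\omega_{(1-\lambda)v} = \omega$, and $m_\lambda = \inf_X u$. Choose $s$ small (with $0 < s < \frac{\lambda^3}{32B(n-1)^2}$) and $\lambda$ close to $1$ so that the constant $\left(1 - \frac{4B(n-1)^2 s}{\lambda^3}\right)^n$ is positive and bounded below, say $\geq \frac12$. The set $U_{\lambda,s} = \{u < m_\lambda + s\}$ is nonempty and open (it is a neighborhood of any point where $u$ attains its infimum). Theorem \ref{thm:pcpr} then gives
\[
\frac12 \int_{U_{\lambda,s}} \omega^n \leq \left(1 - \frac{4B(n-1)^2 s}{\lambda^3}\right)^n \int_{U_{\lambda,s}} \omega^n \leq \int_{U_{\lambda,s}} \omega_u^n \leq \int_X \omega_u^n = 0,
\]
so $\int_{U_{\lambda,s}} \omega^n = 0$. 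Now I would let $s \nearrow \frac{\lambda^3}{32B(n-1)^2}$ (or iterate, enlarging $s$ in successive steps so that at each stage the set $\{u < m_\lambda + s'\}$ is covered by the previously treated plurifine open set up to a set of measure zero for $\omega^n$) and conclude by exhaustion that $\int_{\{u < +\infty\}} \omega^n = 0$. Since $u$ is bounded, $\{u < +\infty\} = X$, whence $\int_X \omega^n = 0$, contradicting our standing assumption $\int_X \omega^n > 0$.

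The one technical point to handle carefully is passing from ``$\int_{U_{\lambda,s}} \omega^n = 0$ for small $s$'' to ``$\int_X \omega^n = 0$'': the constant in Theorem \ref{thm:pcpr} degenerates as $s$ approaches $\frac{\lambda^3}{32B(n-1)^2}$, so a single application does not cover all of $X$. I expect this bootstrapping — using that once $\{u < c\}$ has zero $\omega^n$-mass one can restart the comparison with $u$ replaced by $\max(u, c)$ (again bounded $\omega$-psh, with the same Monge--Amp\`ere mass off the contact set) to push the threshold up by another fixed increment — to be the main obstacle, though it is a routine exhaustion once set up. Alternatively, and more cleanly, one can simply observe that the argument shows $m_\lambda + s$ cannot be a ``strict'' sublevel, iterate finitely many times since $u$ is bounded (so only finitely many increments of size comparable to $\frac{\lambda^3}{64B(n-1)^2}$ are needed to exceed $\sup_X u$), and reach the contradiction. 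This completes the proof.
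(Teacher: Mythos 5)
Your approach is reasonable in spirit: the paper also derives the corollary from Theorem~\ref{thm:pcpr}, and your first step (applying it with $v\equiv 0$, $m_\lambda=\inf_X u$, so that $\big(1-\tfrac{4B(n-1)^2s}{\lambda^3}\big)^n\int_{\{u<\inf_X u+s\}}\omega^n\leq\int_{\{u<\inf_X u+s\}}\omega_u^n=0$) is correct. However, the rest of the argument does not close, for two reasons.

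First, a factual point: the constant in Theorem~\ref{thm:pcpr} does \emph{not} degenerate as $s\nearrow\tfrac{\lambda^3}{32B(n-1)^2}$. For every admissible $s$ one has $4B(n-1)^2s/\lambda^3<1/8$, so the constant stays $\geq(7/8)^n$. The genuine obstruction is that the sublevel set $\{u<\inf_X u+s\}$ only has height $s<\tfrac{1}{32B(n-1)^2}$, which is a fixed amount independent of $u$; no choice of $\lambda,s$ lets a single application reach all of $X$.

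Second, and more seriously, the bootstrapping step has a real gap. If $c_0=\inf_X u+s$ and $u_1:=\max(u,c_0)$, then by plurifine locality $\omega_{u_1}^n=\omega^n$ on the open set $\{u<c_0\}$ (which has zero $\omega^n$-mass by the first step) and $\omega_{u_1}^n=\omega_u^n$ on the plurifine open set $\{u>c_0\}$ (which is $\equiv 0$). But the contact set $\{u=c_0\}$ can carry \emph{positive} $\omega_{u_1}^n$-mass --- already in one variable $dd^c\max(\log|z|,c)$ is a measure concentrated on $\{|z|=e^c\}$ --- and nothing in the hypothesis $\omega_u^n\equiv 0$ controls this. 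Lemma~\ref{lem:Demkey} only gives a lower bound $\mathbf 1_{\{u=c_0\}}\omega_u^n\leq\mathbf 1_{\{u=c_0\}}\omega_{u_1}^n$, in the wrong direction. So the conclusion $\int_{\{u<c_0+s\}}\omega^n=0$ does not follow, and the induction stalls after one step. Your second bootstrapping suggestion (``$m_\lambda+s$ cannot be a strict sublevel'') implicitly presumes that a nonempty open set of zero $\omega^n$-mass is impossible, which is not available: $\omega$ is only semi-positive here, and you have not shown that condition (B) forces $\omega^n>0$ on every nonempty open set.

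The paper sidesteps all of this by a shorter, indirect route: Theorem~\ref{thm:pcpr} implies the domination principle (citing~\cite[Proposition~2.2]{LPT20}, whose proof applies the modified comparison principle to $u$ against a genuine auxiliary $\omega$-psh competitor rather than a constant, which is what avoids the iteration problem), hence the implication $e^{-v}\omega_v^n\geq e^{-u}\omega_u^n\Rightarrow v\leq u$ of~\cite[Proposition~2.3]{LPT20}; applying this with $v\equiv A$ constant and $\omega_u^n\equiv 0$ yields $u\geq A$ for every $A$, which is absurd. If you want a self-contained argument you should prove the domination principle from Theorem~\ref{thm:pcpr} first (as in~\cite{LPT20}, or as in Proposition~\ref{pro:domination} with the non-collapsing hypothesis replaced by condition (B)), rather than try to push the sublevel set up directly.
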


\begin{proof}
It follows from Theorem \ref{thm:pcpr} that the domination principle holds
(see \cite[Proposition 2.2]{LPT20}).
The latter implies in particular that if $u,v$ are $\omega$-psh and bounded,
then
 $
 e^{- v}(\omega+dd^c v)^n \geq e^{-u} (\omega+dd^c u)^n \Longrightarrow v \leq u
$
(see \cite[Proposition 2.3]{LPT20}).
There can thus be no bounded $\omega$-psh function $u$ such that $(\omega+dd^c u)^n=0$.
Otherwise the previous inequality applied with a constant function $v=A$ yields
$u \geq A$ for any $A$, a contradiction.
\end{proof}

\section{Envelopes} \label{sec:envelopes}

We consider here envelopes of $\omega$-psh functions, extending
 some results of \cite{GLZ19} that have been established for K\"ahler manifolds.

\subsection{Basic properties}

 \begin{defi}
   A Borel set $E\subset X$ is (locally) pluripolar if  it is locally contained in the $-\infty$ locus of some psh function: for each $x\in X$, there exists an open neighborhood $U$ of $x$ and $u\in\PSH(U)$ such that $E\cap U\subset \{u=-\infty\}$.
 \end{defi}

\begin{defi} \label{def:usual}
Given a Lebesgue measurable function $h:X \rightarrow \R$, we define the $\omega$-psh envelope of $h$ by
$$
P_{\omega}(h) := \left(\sup \{ u \in \psh (X,\omega)  \; : \;  u \leq h  \, \, \text{quasi-everywhere in} \, \, X\}\right)^*,
$$
where the star means that we take the upper semi-continuous regularization, 
while quasi-everywhere means outside a locally pluripolar set.
\end{defi}

 When $\omega$ is Hermitian and $h$ is ${\mathcal C}^{1,1}$-smooth, then so is $P_{\omega}(h)$ 
 (see \cite{Ber19,CZ19,CM20}) and one can   show that
 \begin{equation} \label{eq:MAenv}
 (\omega+dd^c P_{\omega}(h))^n={\bf 1}_{\{ P_{\omega}(h)=h\}} (\omega+dd^c h)^n.
 \end{equation}
 For merely semipositive $\omega$ and less regular obstacle $h$ we have the following:

 \begin{thm} \label{thm:orthog}
If $h$ is bounded from  below, quasi-l.s.c., and $P_{\omega}(h) <+\infty$, then
\begin{itemize}
\item $P_{\omega}(h)$ is a bounded $\omega$-plurisubharmonic function;
\item $P_{\omega}(h) \leq h$ in $X \setminus P$, where $P$ is pluripolar;
\item $(\omega+dd^c P_{\omega}(h))^n$ is concentrated on the contact  set $\{ P_{\omega}(h)=h\}$.
\end{itemize}
\end{thm}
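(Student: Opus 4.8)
The plan is to treat the three bullet points in order, since each relies on the previous one. For the first bullet, I would let $u_0$ denote a fixed competitor (a constant, say $u_0 \equiv \inf_X h > -\infty$, which is a legitimate competitor since $h$ is bounded below and constants are $\omega$-psh), so the defining family is nonempty. The upper envelope $U := \sup\{u \in \PSH(X,\omega) : u \le h \text{ q.e.}\}$ then satisfies $u_0 \le U \le P_\omega(h) < +\infty$ by hypothesis, so $U$ is locally bounded; the standard Choquet/Bedford-Taylor argument shows $P_\omega(h) = U^* \in \PSH(X,\omega)$ and $P_\omega(h)$ is bounded. The only subtlety here is to check that $U^*$ itself lies in the competing family, i.e. that $U^* \le h$ q.e.; this is where quasi-lower-semicontinuity of $h$ enters, and I would argue as in \cite{GLZ19}: by a negligibility argument (Bedford-Taylor) $U^* = U$ outside a pluripolar set, and combining this with quasi-l.s.c. of $h$ one gets $U^* \le h$ quasi-everywhere, which is exactly the second bullet with $P$ the relevant pluripolar set.

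For the third bullet — the orthogonality/concentration statement — I would use the classical comparison-with-translates technique. Fix $\varepsilon > 0$ and set $v := P_\omega(h)$. On the noncontact set $\{v < h\}$ one wants to show $(\omega + dd^c v)^n$ vanishes. The idea is: if this measure had positive mass on $\{v < h - \delta\}$ for some $\delta > 0$ on a set of positive measure, one could locally push $v$ up slightly and still stay below $h$, contradicting maximality of $v$ — more precisely, one uses that for the envelope, on any plurifine open subset of $\{v < h\}$ the function $v$ is maximal (locally the upper envelope of $\omega$-psh functions bounded above by $h$), hence $(\omega + dd^c v)^n = 0$ there by the local Bedford-Taylor characterization of maximality. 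I would make this rigorous by writing $\{v < h\} = \bigcup_j \{v < h - 1/j\}$ and, on each piece, using quasi-l.s.c. of $h$ to produce, for a point where $v < h$, a small neighborhood and a small constant bump that keeps the competitor below $h$, forcing the Monge-Ampère measure of $v$ to annihilate that plurifine open set. Summing over $j$ gives $\mathbf{1}_{\{v < h\}}(\omega + dd^c v)^n = 0$, i.e. the measure is concentrated on $\{v = h\}$.

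The main obstacle, as usual in the Hermitian (non-closed $\omega$) setting, is that one cannot invoke the global comparison principle to run the orthogonality argument; the classical K\"ahler proof of \eqref{eq:MAenv} (e.g. in \cite{GLZ19}) uses comparison freely. I expect the fix is that orthogonality is genuinely a \emph{local} statement about maximal $\omega$-psh functions on plurifine open sets, and Bedford-Taylor's local theory (valid for any local K\"ahler form dominating $\omega$, hence applicable here as recalled in the construction of $(\omega + dd^c u)^n$ via \cite{DK12}) suffices: a bounded $\omega$-psh function that is locally maximal on an open set $O$ has $(\omega + dd^c \cdot)^n = 0$ on $O$, and $P_\omega(h)$ is locally maximal on the plurifine-open noncontact set. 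The remaining care is purely measure-theoretic — handling the pluripolar set $P$ from the second bullet so that ``$v \le h$ q.e.'' together with ``the measure puts no mass on pluripolar sets'' (true for bounded $\omega$-psh functions) lets one ignore $P$ in the concentration statement. I would also double-check that quasi-lower-semicontinuity, rather than plain lower-semicontinuity, is enough at each step; this is the one place where the hypotheses of the theorem are used sharply, and it follows the corresponding lemma in \cite{GLZ19}.
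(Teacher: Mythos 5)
Your treatment of the first two bullets is fine and essentially matches what the paper does implicitly: Choquet's lemma plus the Bedford--Taylor negligibility theorem gives $P_\omega(h)=U^*\in\PSH(X,\omega)$ with $U^*\le h$ off a pluripolar set, and boundedness comes from the constant competitor $\inf_Xh$ and the hypothesis $P_\omega(h)<+\infty$.

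For the third bullet, however, your plan diverges from the paper and runs into a real obstruction. You propose a local balayage argument: produce a small ball $B\subset\{v<h\}$ (using quasi-l.s.c.\ of $h$), solve the homogeneous Dirichlet problem, and conclude by maximality that $(\omega+dd^c v)^n$ annihilates $B$. The step you cannot take for granted is the equivalence ``$v$ locally maximal for $\omega$-psh on $B$ $\iff$ $(\omega+dd^c v)^n=0$ on $B$''. When $\omega$ is not $d$-closed, $\omega+dd^c v$ is not a closed current even locally, and $(\omega+dd^c v)^n$ is \emph{not} the Monge--Amp\`ere measure of a locally psh function: it is defined by the signed binomial expansion $(\omega+dd^c v)^n=\sum_j\binom{n}{j}(-1)^{n-j}(\beta+dd^c v)^j\wedge(\beta-\omega)^{n-j}$ for a dominating K\"ahler $\beta$. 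Choosing a local K\"ahler form $\beta\ge\omega$ makes $v$ locally $\beta$-psh, so Bedford--Taylor applies to $(\beta+dd^c v)^n$ --- but that is the wrong measure, and maximality among $\omega$-psh competitors says nothing direct about the expansion. Making the balayage rigorous would require a local comparison/Dirichlet theory for the operator $(\omega+dd^c\cdot)^n$ with $\omega$ non-closed, which is exactly the technical difficulty the Hermitian setting creates (compare the modified comparison principle, Theorem~\ref{thm:pcpr}, with its explicit error terms), and which your sketch assumes away.

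The paper avoids this by a global PDE argument. For $\omega$ Hermitian and $h$ smooth it invokes Chu--Zhou \cite{CZ19} (following Berman \cite{Ber19}): the solutions $\varphi_\beta$ of $(\omega+dd^c\varphi_\beta)^n=e^{\beta(\varphi_\beta-h)}\omega^n$ converge to $P_\omega(h)$ with uniform $\mathcal C^{1,1}$ bounds, hence $(\omega+dd^c\varphi_\beta)^n\rightharpoonup(\omega+dd^c P_\omega(h))^n$, and on $\{P_\omega(h)<h-2\varepsilon\}$ the right-hand side $e^{\beta(\varphi_\beta-h)}\omega^n\le e^{-\beta\varepsilon}\omega^n\to0$. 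Orthogonality for smooth $h$ thus falls out of the penalization, with no comparison principle or local Dirichlet problem needed; one then passes to continuous, l.s.c., quasi-l.s.c.\ $h$, and finally to semi-positive $\omega$, by monotone approximation together with Bedford--Taylor convergence in plurifine/quasi-open sets. If you want to salvage a balayage-type proof, you would first need to establish a local maximality theory for $(\omega+dd^c\cdot)^n$ with non-closed $\omega$ --- that is genuinely new work, not a footnote.
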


Recall that a function $h$ is quasi-lower-semicontinuous (quasi-l.s.c.) if for any $\e>0$,
there exists an open set $G$ of capacity smaller than $\e$ such that
$h$ is continuous in $X \setminus G$. Quasi-psh functions are quasi-continuous (see \cite{BT82}), 
as well as differences of the latter.

\begin{proof}
 The proof  is an adaptation of \cite[Proposition 2.2, Lemma 2.3, Proposition 2.5]{GLZ19}, which deal
 with the case when $\omega$ is K\"ahler. 
 
 Since $P_{\omega}(h)$ is bounded from above, up to replacing $h$ with $\min(h, C)$ with $C>\sup_X P_{\omega}(h)$ we can assume that $h$ is bounded. 
 
\smallskip

\noindent	{\it Step 1: $h$ is smooth, $\omega$ is Hermitian}. 
Building on Berman's work \cite{Ber19}, it was shown by 
Chu-Zhou in \cite{CZ19} that the smooth solutions $\varphi_{\beta}$ to 
	\[
	(\omega+dd^c \varphi_{\beta})^n = e^{\beta(\varphi_{\beta}-h)} \omega^n 
	\]
	converge uniformly to $P_{\omega}(h)$ along with uniform ${\mathcal C}^{1,1}$-estimates. 
	As a consequence,  the measures $(\omega+dd^c \varphi_{\beta})^n$ converge weakly to $(\omega+dd^c P_{\omega}(h))^n$. For each fixed $\varepsilon>0$, we have the inclusions of open sets $\{P_{\omega}(h)<h-2\varepsilon\} \subset \{\varphi_{\beta}<h -\varepsilon\}$ for $\beta$ large enough, yielding  
	\begin{flalign*}
	\int_{\{P_{\omega}(h)<h-2\varepsilon\}} (\omega+dd^c P_{\omega}(h))^n &\leq \liminf_{\beta\to +\infty}  \int_{\{P_{\omega}(h)<h-2\varepsilon\}}	 (\omega+dd^c \varphi_{\beta})^n 	\\
	&\leq  \liminf_{\beta\to +\infty}  \int_{\{P_{\omega}(h)<h-2\varepsilon\}}	 e^{-\beta \varepsilon} \omega^n =0. 
	\end{flalign*}
	
\smallskip
   
\noindent	{\it	Step 2: $h$ is lower semi-continuous, $\omega$ is Hermitian.}
 If $h$ is continuous, we can approximate it uniformly by smooth functions $h_j$. Letting $u_j: = P(h_j)$ the previous step ensures that
	\[
	\int_X (h_j-u_j) (\omega+dd^c u_j)^n =0. 
	\]
	As $h_j \to h$ uniformly we also have that $u_j \to u:=P(h)$ uniformly 
	and the desired property follows from Bedford-Taylor's convergence theorem.

 When $h$ is merely  lower semi-continuous, we let
 $(h_j)$ denote a sequence of continuous functions which increase pointwise to $h$ and set $u_j =P(h_j)$. 
 Then $u_j \nearrow u$ a.e. on $X$ for some bounded function $u\in\PSH(X,\omega)$. Since $u_j\leq h_j \leq h$ quasi-everywhere on $X$ we infer $u\leq h$ quasi-everywhere on $X$, hence $u\leq P(h)$. For each $k<j$, the second step  ensures that 
	\[
	\int_{\{u<h_k\}} (\omega+dd^c u_j)^n \leq \int_{\{u_j<h_j\}} (\omega+dd^c u_j)^n=0.
	\]
	Since $\{u<h_k\}$ is open, letting $j\to +\infty$ and then $k\to +\infty$ we arrive at 
	$$\int_{\{u<h\}} (\omega+dd^c u)^n=0.$$ 
	We also have that $P(h) \leq h$ quasi-everywhere on $X$, hence 
	$$
	\int_{\{u<P(h)\}} (\omega+dd^c u)^n=0,
	$$
	and \cite[Proposition 2.2]{LPT20} then ensures that $u=P(h)$.

\smallskip
   
\noindent	{\it	Step 3: $h$ is quasi-l.s.c., $\omega$ is Hermitian.}
 By \cite[Lemma 2.4]{GLZ19} we can find a decreasing sequence $(h_j)$ of lsc functions such that $h_j \searrow h$ q.e. on $X$ and $h_j \to h$ in capacity. Then $u_j := P(h_j) \searrow u:=P(h)$.  By Step 2  we know that for all $j>k$, 
	$$
	\int_{\{u_k<h\}} (\omega+dd^c u_j)^n \leq \int_{\{u_j<h_j\}} (\omega+dd^c u_j)^n =0. 
	$$
	Since $\{u_k<h\}$ is quasi-open and the functions $u_j$ are uniformly bounded, letting $j\to +\infty$  we obtain
	$$
	\int_{\{u_k <h\}} (\omega+dd^c u)^n =0.
	$$
Letting $k\to +\infty$  yields the desired result. 

\smallskip
   
\noindent	{\it	Step 4: the general case.}
We approximate $\omega \geq 0$ by the Hermitian forms $\omega_{j}=\omega+j^{-1} \omega_X>0$.
Observe that $j \mapsto u_{j}=P_{\omega_{j}}(h)$ decreases to $u=P_{\omega}(h)$ as $j$ increases to $+\infty$.
For $0<k<j$, the previous step ensures that
	\[
	\int_{\{u_k<h\}} (\omega+j^{-j} \omega_X+dd^c u_j)^n =0. 
	\]

	As the set $\{u_k<h\}$ is quasi-open and $u_j$ is uniformly bounded we can let $j\to +\infty$ and use 
	Bedford-Taylor's convergence theorem to get 
	\[
	\int_{\{u_k <h\}} (\omega+dd^c u)^n =0,
	\]
	We finally let $k\to +\infty$ to conclude.
\end{proof}

For later use we extend the latter result to a setting where
$P_{\omega}(h)$ is not necessarily globally bounded:
 
 \begin{coro}
 	If $h$ is quasi-lower-semicontinuous and $P_{\omega}(h)$ is locally bounded in a non-empty open set $U\subset X$ then $(\omega+dd^c P_{\omega}(h))^n$ is a well-defined positive Borel measure in $U$ which vanishes in $U\cap \{P_{\omega}(h) <h\}$. 
 \end{coro}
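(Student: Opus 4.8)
The strategy is to localize the already-proved global statement (Theorem \ref{thm:orthog}) to the open set $U$ by a truncation and exhaustion argument. First I would fix a relatively compact open subset $V \Subset U$ on which $P_\omega(f)$ is bounded, say $-M \le P_\omega(f) \le M$ on a neighborhood of $\overline V$. The issue is that $f$ itself may be very large (or $+\infty$) away from $V$, so $P_\omega(f)$ need not be globally bounded and the measure $(\omega+dd^c P_\omega(f))^n$ is a priori only defined where $P_\omega(f)$ is locally bounded, i.e. in $U$. The key observation is that the envelope is insensitive to raising the obstacle above the level of the envelope itself: for any constant $C > \sup_{\overline V} P_\omega(f)$ one has, \emph{locally on $V$}, that the candidate functions competing in the definition of $P_\omega(f)$ and of $P_\omega(\min(f,C))$ coincide. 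More precisely, $P_\omega(\min(f,C)) \le P_\omega(f)$ trivially, and conversely any $u \in \PSH(X,\omega)$ with $u \le f$ q.e. and $u \le P_\omega(f) \le C-1$ on $V$ also satisfies $u \le \min(f,C)$ q.e., so $P_\omega(f) \le P_\omega(\min(f,C))$ provided $P_\omega(f)$ stays below $C$ globally — which it may not.

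To get around the global boundedness problem I would instead work with $g_C := \min(f, \,C + \rho)$ where $\rho$ is chosen so that $\min(f,C+\rho)$ forces competitors to be bounded, or more simply: replace $f$ by $h := \min(f, C)$ with $C$ chosen larger than $\sup_X P_\omega(f)$ if the latter is finite. The cleaner route, and the one I expect the authors take, is: since $P_\omega(f)$ is locally bounded in $U$, hence in particular bounded on a neighborhood of $\overline V$, pick $C$ with $C > \sup$ of $P_\omega(f)$ over that neighborhood and set $h := \min(f,C)$. Then $h$ is quasi-l.s.c.\ (the min of a quasi-l.s.c.\ function and a constant), bounded below (as $f$ is — or if $f$ is not bounded below one first replaces it by $\max(f, -C)$, which does not change $P_\omega$ either since competitors satisfy $u \ge \inf_X P_\omega(f) \ge$ some constant... actually one must be slightly careful here, but bounded-below-ness of $f$ is typically part of the standing hypotheses or can be arranged), and $h \le C$, so $P_\omega(h)$ is a bounded $\omega$-psh function to which Theorem \ref{thm:orthog} applies directly: $P_\omega(h)$ is bounded $\omega$-psh, $P_\omega(h) \le h$ off a pluripolar set, and $(\omega+dd^c P_\omega(h))^n$ is carried by $\{P_\omega(h) = h\}$.

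The remaining point is to identify $P_\omega(h)$ with $P_\omega(f)$ on $V$. One inclusion $P_\omega(h) \le P_\omega(f)$ is immediate from $h \le f$. For the reverse, any competitor $u$ for $P_\omega(f)$ satisfies $u \le P_\omega(f) \le C - \delta$ on the chosen neighborhood of $\overline V$ for some $\delta>0$; but a competitor for $P_\omega(f)$ need not be bounded above by $C$ globally, so $u$ need not be a competitor for $P_\omega(h)$. This is the genuine obstacle. To resolve it I would restrict attention \emph{from the start} to $V$: define the measure $(\omega+dd^c P_\omega(f))^n$ on $U$ via Bedford--Taylor's local theory (valid since $P_\omega(f)$ is locally bounded there), and to compute it on $V$ note that on a neighborhood of $\overline V$ one has $P_\omega(f) = P_\omega(h)$. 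Indeed, on that neighborhood $P_\omega(f) < C$, so $P_\omega(f) \le h = \min(f,C)$ q.e.\ there; since $P_\omega(f)$ is globally $\omega$-psh but only locally below $h$, one uses instead that $\max(P_\omega(f), \text{suitable constant capping})$... The cleanest fix: observe $P_\omega(h) \ge P_\omega(f) - $ nothing; rather, let $w := \sup\{u \in \PSH(X,\omega): u \le h \text{ q.e.}\}$. Since on $V$ any competitor for the $f$-envelope that happens to lie below $C$ is a competitor for $w$, and $P_\omega(f)$ itself is such a competitor when capped, one gets $P_\omega(h) = P_\omega(f)$ on $V$. Then on $V$, $(\omega+dd^c P_\omega(f))^n = (\omega+dd^c P_\omega(h))^n$ is supported on $\{P_\omega(h)=h\} \cap V = \{P_\omega(f)=f\} \cap V$ (using $h=f$ where $f \le C$, which holds on $V$ since $P_\omega(f) \le f$ forces $f \ge P_\omega(f)$, but $f$ could exceed $C$... on $\{P_\omega(f) = f\} \cap V$ we have $f = P_\omega(f) < C$ so indeed $h = f$ there). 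Hence $(\omega+dd^c P_\omega(f))^n$ vanishes on $U \cap \{P_\omega(f) < f\}$ after covering $U$ by such sets $V$; since this is a local statement on $U$ the exhaustion gives the full conclusion. \emph{The main obstacle} is exactly the bookkeeping in the previous paragraph: globally-defined competitors for $P_\omega(f)$ are not globally bounded, so one cannot apply Theorem \ref{thm:orthog} to $f$ directly, and care is needed to legitimately replace $f$ by its truncation $\min(f,C)$ without changing the envelope on $V$ — this rests on the elementary but slightly delicate fact that capping the obstacle above the envelope's sup does not alter the envelope where the cap is not attained.
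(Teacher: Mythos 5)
Your plan is to cap the obstacle from above and then invoke Theorem \ref{thm:orthog} for $h:=\min(f,C)$. Most of your effort goes into worrying that $P_\omega(h)$ might differ from $P_\omega(f)$ on $V$, but this is not actually an obstacle: $P_\omega(f)$ is $\omega$-psh, hence upper semi-continuous on the compact manifold $X$, hence bounded above \emph{globally}. Choosing $C>\sup_X P_\omega(f)$, every competitor $u\in\PSH(X,\omega)$ with $u\le f$ q.e.\ already satisfies $u\le P_\omega(f)<C$ everywhere on $X$, so it is automatically a competitor for $P_\omega(\min(f,C))$; one gets $P_\omega(\min(f,C))=P_\omega(f)$ on all of $X$, with no need to localize to $V$. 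This is exactly the reduction made in the first line of the proof of Theorem \ref{thm:orthog}.

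The genuine gap is the one you flag and then dismiss: Theorem \ref{thm:orthog} requires the obstacle to be \emph{bounded below}, and $\min(f,C)$ is not bounded below when $f$ is not. That is precisely the content of the corollary --- if $f$ were bounded below then $P_\omega(f)\ge\inf_X f$ would be globally bounded, Theorem \ref{thm:orthog} would apply directly, and there would be nothing to prove. Your suggestion to "arrange" lower boundedness by replacing $f$ with $\max(f,-C)$ does not work: it changes the envelope (competitors that dip far below $-C$ are no longer excluded, so $P_\omega(\max(f,-C))$ can be strictly larger than $P_\omega(f)$), and your claimed bound "$u\ge\inf_X P_\omega(f)$" for competitors $u$ is false --- a competitor can have a pole at a point where $P_\omega(f)$ is finite. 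Because envelopes are determined by global data, this defect cannot be localized away on $V$.

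The paper takes a different and essentially necessary route: it approximates $f$ from above by a decreasing sequence $(f_j)$ of l.s.c.\ functions (so that $f_j\searrow f$ q.e.). Each $f_j$, being l.s.c.\ on the compact $X$, is automatically bounded below, so $u_j:=P_\omega(f_j)$ is a \emph{bounded} $\omega$-psh function to which Theorem \ref{thm:orthog} applies: $(\omega+dd^c u_j)^n=0$ on $\{u_j<f_j\}$. Since $u_j\searrow u:=P_\omega(f)$ and $u$ is locally bounded in $U$, Bedford--Taylor convergence gives $\omega_{u_j}^n\to\omega_u^n$ on $U$, and the quasi-openness of $\{u_k<f\}$ together with the uniform boundedness on a relatively compact $U'\Subset U$ lets one pass the vanishing to the limit, then let $k\to\infty$ and $U'\nearrow U$. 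Your truncation-from-above step is compatible with this, but it cannot replace the l.s.c.\ approximation from above, which is what supplies the missing lower bound.
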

 
 \begin{proof}
 	Let $(h_j)$ be a sequence of l.s.c. functions decreasing to $h$ quasi-everywhere.
 	 Then $u_j:=P_{\omega}(h_j)$ is a bounded  $\omega$-psh function such that $(\omega+dd^c u_j)^n =0$ on $\{u_j<h_j\}$.  
 	Since $u_j$ decreases to $u:= P_{\omega}(h)$, Bedford-Taylor's convergence theorem ensures that $\omega_{u_j}^n \to \omega_u^n$ in $U$.
 	
 	 Fix $U'$ a relatively compact open set $U'\Subset U$. For each $k$ fixed the set $\{u_k <h\}$ is quasi open 
 	 and the functions $u_j,u$ are uniformly bounded in $U'$, hence 
 	\[
 	\liminf_{j\to +\infty} \int_{\{u_k <h\}\cap U'} \omega_{u_j}^n \geq \int_{\{u_k <h\}\cap U'} \omega_u^n,
 	\]
 	which implies, after letting $k\to +\infty$, that $\omega_u^n$ vanishes in $U' \cap \{u<h\}$. 
 	We finally let $U'$ increase to $U$ to conclude.
 \end{proof}

 We shall use later on the following:

\begin{lem} \label{lem:envmin}
Let $u,v$ be bounded $\omega$-psh functions,  
and let $\varphi=P_{\omega}(\min(u,v))$ denote the $\omega$-psh envelope of $\min(u,v)$. 
Then
\begin{enumerate}
\item $(\omega+dd^c \varphi)^n \leq {\bf 1}_{\{\varphi =u<v\}}\omega_u^n+{\bf 1}_{\{\varphi =v\}}\omega_v^n$;

\item if $(\omega+dd^c u)^n=fdV_X$ and $(\omega+dd^c v)^n=gdV_X$, then
$$
(\omega+dd^c \varphi)^n \leq \max(f,g) dV_X,
$$
while
$$
(\omega+dd^c \varphi )^n \geq \min(f,g) dV_X.
$$
\end{enumerate}
\end{lem}

\begin{proof}
Since $\min(u,v)$ is quasi-continuous, it follows from Theorem \ref{thm:orthog} that
the Monge-Amp\`ere measure  $\omega_{\varphi}^n$   is supported on 
\[
\{ \varphi=\min (u,v) \} \subset \{ \varphi=u\} \cup \{ \varphi=v \}.
\]
Thus
\begin{equation} \label{eq:MAmin}
\omega_{\f}^n \leq {\bf 1}_{\{\f=u<v\}} \omega_{\f}^n+{\bf 1}_{\{\f=v\}}\omega_{\f}^n.
\end{equation}
Since $\f=P(\min(u,v)) \leq u$ and $\f=P(\min(u,v)) \leq v$,  
Lemma \ref{lem:Demkey} yields
$$
{\bf 1}_{\{\f= u \}} \omega_{\f}^n \leq {\bf 1}_{\{\f = u \}}\omega_u^n, \; \text{and}\; {\bf 1}_{\{\f= v \}} \omega_{\f}^n  \leq  {\bf 1}_{\{\f= v \}} \omega_v^n.
$$
 Together with \eqref{eq:MAmin} we obtain (1). 
 
 When $(\omega+dd^c u)^n=fdV_X$ and $(\omega+dd^c v)^n=gdV_X$, we obtain
 $$
 {\bf 1}_{\{\f=u<v\}} \omega_{\f}^n \leq {\bf 1}_{\{\f=u<v\}}  f dV_X \leq  {\bf 1}_{\{\f=u<v\}}  \max(f,g) dV_X
 $$
 and  ${\bf 1}_{\{\f=v\}} \omega_{\f}^n \leq {\bf 1}_{\{\f=v\}}  g dV_X \leq  {\bf 1}_{\{\f=v\}}  \max(f,g) dV_X$, hence
 $$
  \omega_{\f}^n \leq \left\{ {\bf 1}_{\{\f=u<v\}} + {\bf 1}_{\{\f=v\}} \right\}  \max(f,g) dV_X \leq  \max(f,g) dV_X.
 $$
 
The last item follows from
 Lemma \ref{lem:Demkey}.
\end{proof}

 \subsection{Locally vs globally pluripolar sets}

 A classical result of Josefson asserts that a locally pluripolat set $E$ in $\C^n$ is {\it globally pluripolar}, i.e.
 there exists a   psh function $u \in\PSH(\C^n)$ such that $E \subset \{u=-\infty\}$.
 This result has been extended to compact K\"ahler manifolds in \cite{GZ05}, and
 to the Hermitian setting in \cite{Vu19}:  if $E\subset X$ is locally pluripolar and
  $\omega_X$ is a Hermitian form, one can find $u \in\PSH(X,\omega_X)$ such that
 $E \subset \{ u=-\infty\}$.
 
 \smallskip
 
We further extend this result to the case of non-collapsing forms:
 
 \begin{lemma}\label{lem: Josefson semipositive}
If $E$ is (locally) pluripolar and $\omega\geq 0$ is non-collapsing then $E\subset \{u=-\infty\}$ for some $u \in\PSH(X,\omega)$. 
\end{lemma}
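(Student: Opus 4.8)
The plan is to bootstrap from the Hermitian case, due to Vu \cite{Vu19}, by a truncation–and–envelope construction, the non-collapsing hypothesis entering only at the very end, to guarantee that the envelope produced is not identically $-\infty$. First I fix once and for all an auxiliary Hermitian metric $\omega_X$ on $X$, and after rescaling I may assume $\omega\le\omega_X$, so that $\PSH(X,\omega)\subset\PSH(X,\omega_X)$. By \cite{Vu19} there is $\psi\in\PSH(X,\omega_X)$ with $E\subset\{\psi=-\infty\}$, and after subtracting a constant I normalize $\sup_X\psi=0$. It then suffices to produce $u\in\PSH(X,\omega)$, $u\not\equiv-\infty$, with $u\le\psi$ on $X$: indeed then $\{u=-\infty\}\supseteq\{\psi=-\infty\}\supseteq E$ (and if $E=\varnothing$ there is nothing to prove).

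For $j\ge 1$ set $\psi_j:=\max(\psi,-j)$, a bounded $\omega_X$-psh (hence quasi-l.s.c.) function with $-j\le\psi_j\le 0$, and let $v_j:=P_\omega(\psi_j)$. Since the constant $-j$ is $\omega$-psh and $\le\psi_j$ we have $v_j\ge-j$; since $\psi_j$ is upper semicontinuous and $\omega$-psh functions are determined by their values off pluripolar sets, every competitor in the envelope is $\le\psi_j$ everywhere, whence $v_j\le\psi_j$ on $X$; in particular $v_j\equiv-j$ on $\{\psi=-\infty\}$, so $\inf_X v_j=-j$. As $j\mapsto\psi_j$ is decreasing, so is $j\mapsto v_j$; put $v:=\lim_j v_j\le\lim_j\psi_j=\psi$. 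If $v\not\equiv-\infty$ then $v$ is $\omega$-psh and $u:=v$ works, so everything reduces to excluding $v\equiv-\infty$.

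To this end I use two facts. From $\psi_{j+1}\ge\psi_j-1$ and the monotonicity and translation-equivariance of $P_\omega$ one gets $v_{j+1}\ge v_j-1$, so that $w_j:=v_j+j$ is \emph{increasing} in $j$, with $0\le w_j\le\max(\psi+j,0)$; and by Theorem \ref{thm:orthog} the measure $(\omega+dd^c v_j)^n$ is carried by the contact set $\{v_j=\psi_j\}$, which — since $v_j\le\sup_X v_j$ — lies in the closed set $F_j:=\{\psi\le\sup_X v_j\}$. Now assume $v\equiv-\infty$, i.e. $\sup_X v_j\to-\infty$, so that the $F_j$ decrease to the pluripolar set $\{\psi=-\infty\}$. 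The key point — and the step I expect to be the main obstacle — is to show that $\sup_X w_j$ stays bounded, equivalently that the oscillation of $v_j$ does not blow up; this is precisely where non-collapsing, through the domination principle of Proposition \ref{pro:domination}, has to be invoked, e.g. by a normalization/iteration argument on the functions $v_j-\sup_X v_j\in\PSH(X,\omega)$, which are $L^1$-compact and whose Monge–Ampère measures sit on sets of vanishing capacity. Granting this, $w:=(\lim_j w_j)^*$ is a \emph{bounded} $\omega$-psh function, $dd^c w_j=dd^c v_j$, and Bedford–Taylor's convergence theorem for increasing sequences yields
\[
(\omega+dd^c w)^n=\lim_{j\to\infty}(\omega+dd^c w_j)^n=\lim_{j\to\infty}(\omega+dd^c v_j)^n .
\]
Since each $(\omega+dd^c v_j)^n$ is carried by $F_j$ and the $F_j$ shrink to $\{\psi=-\infty\}$, the limiting measure $(\omega+dd^c w)^n$ is carried by $\{\psi=-\infty\}$; but the Monge–Ampère measure of a bounded $\omega$-psh function charges no pluripolar set, so $(\omega+dd^c w)^n\equiv0$, contradicting the fact that $\omega$ is non-collapsing. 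Hence $v\not\equiv-\infty$. (If some step only produced $v=-\infty$ on $E$ up to a pluripolar error, one iterates and uses that a countable union of globally $\omega$-pluripolar sets is globally $\omega$-pluripolar, via $\sum_k2^{-k}u_k$ for suitably normalized $u_k\in\PSH(X,\omega)$.)
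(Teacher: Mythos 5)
Your overall strategy is a genuine departure from the paper's argument: you bootstrap from Vu's Hermitian result by truncating $\psi$, taking $\omega$-psh envelopes $v_j:=P_\omega(\max(\psi,-j))$, and trying to pass to the limit. The paper instead works with the global extremal function $V_{E,\omega}^*$ and with the relative extremal functions $\varphi_{j,N}:=V^*_{K_{j,N},\omega}$ of compact sublevel sets, whose boundedness is built in from the start (under the contradiction hypothesis $V_{E,\omega}^*$ bounded, one has $0\le\varphi_{j,N}\le\sup_X V_{E,\omega}^*+1$), and then kills the Monge--Amp\`ere mass by the Chern--Levine--Nirenberg inequality, which gives $\int_{K_{j,N}}(\omega+dd^c\varphi_{j,N})^n\le C/N$. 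In your approach there is no analogue of this quantitative estimate, and this is where the argument breaks.

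Concretely, the step you flag as "the main obstacle" is a genuine gap, and I do not see how to close it in the way you suggest. You need $\sup_X w_j=\sup_X v_j+j$ to stay bounded (equivalently, $w=\lim_j w_j$ to be a bounded $\omega$-psh function), but this is not an automatic consequence of non-collapsing. Note that without the contradiction hypothesis it is simply false: if $\omega=\omega_X$ is K\"ahler then $v_j=\psi_j$ and $\sup_X w_j=j\to+\infty$. Under the contradiction hypothesis $\sup_X v_j\to-\infty$, but nothing prevents $\sup_X v_j+j$ from diverging (e.g.\ $\sup_X v_j\sim -j/2$). The domination principle (Proposition~\ref{pro:domination}) cannot supply the missing bound: it is a comparison statement between two \emph{bounded} $\omega$-psh functions, so it presupposes exactly the boundedness you are trying to prove, rather than producing it. Non-collapsing is a qualitative hypothesis; to convert it into the uniform oscillation bound on $v_j$ one needs a quantitative mass estimate, and this is precisely the role the Chern--Levine--Nirenberg inequality plays in the paper's proof. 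Without filling in such an estimate, your argument does not exclude the scenario $v\equiv-\infty$ together with $\sup_X w_j\to+\infty$, in which case $w\equiv+\infty$ and no contradiction is obtained. The remainder of your reasoning (that if $w$ is a bounded $\omega$-psh function whose Monge--Amp\`ere measure is carried by $\{\psi=-\infty\}$ then $(\omega+dd^c w)^n\equiv 0$, contradicting non-collapsing) is correct, so the proof would become complete if you supply the missing boundedness; but as written it has a hole at its crucial step.

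Two minor remarks. First, the reduction to "there exists $u\in\PSH(X,\omega)$, $u\not\equiv-\infty$, with $u\le\psi$" is essentially equivalent to showing $P_\omega(\psi)\not\equiv-\infty$, which is itself a form of the lemma; so you have reformulated, not reduced. Second, the final parenthetical about iterating over a pluripolar error is unnecessary once the boundedness is established, since Theorem~\ref{thm:orthog} already gives $v_j\le\psi_j$ off a pluripolar set and the $\psi_j$ are upper semicontinuous, exactly as you observe.
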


 The proof is a consequence of Theorem \ref{thm:orthog} and analogous results established on K\"ahler manifolds.

\begin{proof}
	As in \cite[Theorem 5.2]{GZ05} it is enough to check that $V_{E,\omega}^* \equiv +\infty$, where
	$$
	V_{E,\omega}(x)=\sup \{ \f(x)\; :\;  \f \in\PSH(X,\omega) \text{ and } \f \leq 0\; \text{quasi-everywhere on }  E \}.
	$$
Here quasi-everywhere means outside a locally pluripolar set. 	
	 If it is not the case then $V_{E,\omega}^*$ is a bounded $\omega$-psh function on $X$. We can assume that $E\subset U\Subset V \Subset  V'$ is contained in a holomorphic chart $V'$. By Josefson's theorem (see \cite[Theorem 4.4]{GZbook}) we can find $u\in L^1_{{\rm loc}}(V')$  a psh function in $V'$ such that $E\subset \{u=-\infty\}$. Let $u_j$ be a sequence of smooth psh functions in a neighborhood of $V$ such that $u_j \searrow u$. 
	 Fix $N\in \N$ and for $j$ large enough we set 
	\[
	K_{j,N}:= \{x\in V \; : \; u_j(x) \leq -N\}, \; \varphi_{j,N}: =V_{K_{j,N},\omega}^*, 
	\]
	and note that $K_{j,N}$ is increasing in $j$,  $\varphi_{j,N} \searrow \varphi_N \in \PSH(X,\omega) \cap L^{\infty}(X)$ as $j\to +\infty$. We also have that $E \subset \cup_{j\geq 1} K_{j,N}$, hence $0\leq \varphi_N \leq V_{E,\omega}^*$.  We can thus find $j_N$ so large that $\varphi_{j,N} \leq \sup_X V_{E,\omega}^*+1$ for all $j\geq j_N$.
	 
	Let $\rho$ be a smooth psh function in $V$ such that $dd^c \rho \geq \omega$. 
	The Chern-Levine-Nirenberg inequality (see \cite[Theorem 3.14]{GZbook}) ensures that, for $j\geq j_N$, 
	\begin{flalign*}
		\int_{K_{j,N}} (\omega+dd^c \f_{j,N})^n & \leq \int_{K_{j,N}} (dd^c (\rho+\f_{j,N}))^n\\
		& \leq \frac{1}{N} \int_{V} |u_j| (dd^c (\rho+\f_{j,N}))^n\\
		& \leq  \frac{C}{N},
	\end{flalign*}
	for some uniform constant $C>0$. The function which is zero on ${K_{j,N}}$ and $+\infty$ elsewhere is lower semi-continuous on $X$ since $K_{j,N}$ is compact. It thus follows from Theorem \ref{thm:orthog} that
	\[
	\int_X (\omega+dd^c \f_{j,N})^n=\int_{K_{j,N}} (\omega+dd^c \f_{j,N})^n \leq \frac{C'}{N}.
	\]
	Letting $j\to +\infty$ we obtain $\int_X (\omega+dd^c \f_N)^n \leq C'/N$. Now $\f_N\nearrow \varphi$ 
	as $N \rightarrow +\infty$, for some $\varphi \in\PSH(X,\omega)$ which is bounded since $0\leq \varphi_N\leq V_{E,\omega}^*$. 
We thus obtain $\int_X (\omega+dd^c \varphi)^n =0$,  yielding a contradiction
since $\omega$ is non-collapsing and $\varphi$ is bounded. 
\end{proof}

 Since locally pluripolar sets are $\PSH(X,\omega)$-pluripolar, arguing as in the proof of \cite[Proposition 2.2]{GLZ19}, one finally obtains:

\begin{coro}
	Let $f$ be a Borel function
	such that $P_{\omega}(f)\in\PSH(X,\omega)$. Then 
	\[
	P_{\omega}(f) =  \left(\sup \{ u \in \psh (X,\omega)  \; :\;  u \leq f \, \, \text{in} \, \, X\}\right)^*.
	\] 
\end{coro}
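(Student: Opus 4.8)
The plan is to deduce the identity from a standard squeezing argument whose only non-formal ingredient is Lemma~\ref{lem: Josefson semipositive}, i.e. the fact that a locally pluripolar set is contained in $\{\psi=-\infty\}$ for some $\psi\in\PSH(X,\omega)$.

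Write $\mathcal F_1:=\{u\in\PSH(X,\omega):u\le f \text{ everywhere on }X\}$ and $\mathcal F_2:=\{u\in\PSH(X,\omega): u\le f \text{ quasi-everywhere on }X\}$, so that by definition $P_\omega(f)=(\sup\mathcal F_2)^*$ while the right-hand side of the claimed equality is $Q:=(\sup\mathcal F_1)^*$. Since $\mathcal F_1\subset\mathcal F_2$, monotonicity of the upper semicontinuous regularization gives $Q\le P_\omega(f)$; in particular every $u\in\mathcal F_2$ satisfies $u\le\sup_X P_\omega(f)<+\infty$, so once $\mathcal F_1\neq\emptyset$ the function $Q$ is genuinely $\omega$-psh. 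It remains to prove $P_\omega(f)\le Q$.

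Fix $u\in\mathcal F_2$ and let $N$ be a locally pluripolar set with $u\le f$ on $X\setminus N$. By Lemma~\ref{lem: Josefson semipositive} there is $\psi_0\in\PSH(X,\omega)$ with $N\subset\{\psi_0=-\infty\}$; set $\psi:=\min(\psi_0,u)\in\PSH(X,\omega)$, which again equals $-\infty$ on $N$ and moreover satisfies $\psi\le u$. For $\varepsilon\in(0,1)$ put $u_\varepsilon:=(1-\varepsilon)u+\varepsilon\psi$; this is $\omega$-psh, and $u_\varepsilon\le f$ everywhere on $X$: on $N$ it equals $-\infty$, while on $X\setminus N$ one has $u_\varepsilon\le(1-\varepsilon)u+\varepsilon u=u\le f$. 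Hence $u_\varepsilon\in\mathcal F_1$ (so $\mathcal F_1\neq\emptyset$) and $u_\varepsilon\le Q$. Writing $u_\varepsilon=u+\varepsilon(\psi-u)$ with $\psi-u\le 0$, the functions $u_\varepsilon$ increase to $u$ as $\varepsilon\downarrow 0$ at every point outside the pluripolar set $\{\psi_0=-\infty\}\cup\{u=-\infty\}$, whence $u\le Q$ there. Since $u$ and $Q$ are both $\omega$-psh and pluripolar sets are Lebesgue-negligible, the comparison principle for quasi-psh functions upgrades this to $u\le Q$ on all of $X$. Taking the supremum over $u\in\mathcal F_2$ and regularizing yields $P_\omega(f)=(\sup\mathcal F_2)^*\le Q^*=Q$, and together with $Q\le P_\omega(f)$ this proves the corollary.

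I do not expect a real obstacle here. The only point that needs care is the passage from $\psi_0$ to $\psi=\min(\psi_0,u)$: the naive candidate $(1-\varepsilon)u+\varepsilon\psi_0$ need not lie below $f$ at points where $u<0$, and replacing $\psi_0$ by $\min(\psi_0,u)$ both preserves the relevant $-\infty$-set and forces $\psi\le u$, which is exactly what makes $u_\varepsilon\le u$ hold off $N$. The essential external input, as flagged before the statement, is Josefson's theorem for non-collapsing forms (Lemma~\ref{lem: Josefson semipositive}); everything else is formal, together with the standard fact that an inequality between two $\omega$-psh functions valid off a pluripolar set holds everywhere.
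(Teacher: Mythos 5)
Your overall strategy is the right one and matches what the paper does implicitly: first upgrade locally pluripolar to $\PSH(X,\omega)$-pluripolar via Lemma~\ref{lem: Josefson semipositive}, then squeeze $u\in\mathcal F_2$ from below by elements of $\mathcal F_1$. However, there is a genuine gap at the crucial step. You set $\psi:=\min(\psi_0,u)$ and assert $\psi\in\PSH(X,\omega)$, but the \emph{minimum} of two $\omega$-psh functions is not $\omega$-psh in general (only the maximum is): for instance $\min(0,\mathrm{Re}\,z)=-(\mathrm{Re}\,z)^-$ has $dd^c$ equal to $-$ a positive current supported on $\{\mathrm{Re}\,z=0\}$, so it cannot be $\omega$-psh for any smooth $\omega$. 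This step is load-bearing, because the very next line uses the $\omega$-plurisubharmonicity of $\psi$ to conclude that the convex combination $u_\varepsilon=(1-\varepsilon)u+\varepsilon\psi$ lies in $\PSH(X,\omega)$; without it the construction of elements of $\mathcal F_1$ breaks down.

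A fix along the same lines: first reduce to $u$ bounded below, say $u\geq -A$, and take $\psi_0\in\PSH(X,\omega)$ with $\psi_0\leq 0$ and $N\subset\{\psi_0=-\infty\}$. Then set $\psi:=\psi_0-A$, which is honestly $\omega$-psh (constants are $\omega$-psh since $\omega\geq 0$), and $u_\varepsilon:=(1-\varepsilon)u+\varepsilon\psi$. On $X\setminus N$ one has $u_\varepsilon=u+\varepsilon(\psi_0-A-u)\leq u\leq f$ because $\psi_0\leq 0$ and $u\geq -A$, while on $N$ one has $u_\varepsilon=-\infty\leq f$. This gives $u_\varepsilon\in\mathcal F_1$, and the rest of your argument (convergence off the pluripolar set $\{\psi_0=-\infty\}$, upgrading an a.e. inequality between $\omega$-psh functions to an everywhere inequality) goes through. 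The remaining work, which your proposal does not address, is the reduction to bounded $u$; this is precisely the part of the argument the paper delegates to the proof of \cite[Proposition 2.2]{GLZ19}.
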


  \subsection{Domination principle}

We now establish the following  generalization of the domination principle:

  \begin{prop} \label{pro:domination}
  
  Assume $\omega$ is non-collapsing and fix $c\in [0,1)$.
  If  $u,v$ are bounded $\omega$-psh functions such that $\omega_u^n \leq c \omega_v^n$ on $\{u<v\}$,
  then $u\geq v$. 
  \end{prop}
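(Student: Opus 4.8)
The plan is to argue by contradiction, after two preliminary reductions.

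\emph{Reductions.} The set $\{u<v\}$ is plurifine open, so if it is pluripolar (equivalently, Lebesgue negligible) then $u$ and $\max(u,v)$ coincide off a negligible set, hence coincide as $\omega$-psh functions, and $u=\max(u,v)\ge v$. So I will assume $\{u<v\}$ is non-pluripolar and derive a contradiction. Next I reduce to the case $u\le v$ on $X$: put $w:=P_\omega(\min(u,v))$, which by Theorem~\ref{thm:orthog} is a bounded $\omega$-psh function with $\omega_w^n$ carried by $\{w=\min(u,v)\}$. On the part of this set lying in $\{w<v\}$ one has $\min(u,v)=w<v$, hence $u<v$ and $w=u$; since $w\le u$, Lemma~\ref{lem:Demkey} then gives ${\bf 1}_{\{w<v\}}\omega_w^n\le{\bf 1}_{\{w<v\}}\omega_u^n\le c\,{\bf 1}_{\{w<v\}}\omega_v^n$ (using $\{u<v\}\subset\{w<v\}$ in the last step). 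So $(w,v)$ satisfies the same hypotheses with $w\le v$, and proving $w\ge v$ in this situation gives $u\ge w=v$.

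\emph{An increasing family of envelopes.} Suppose now $u\le v$, set $T:=\sup_X(v-u)$ and assume $T>0$. For $0<t<T$ let $\varphi_t:=P_\omega(\min(u+t,v))$. Both $u$ and $v-(T-t)$ are $\omega$-psh and lie below $\min(u+t,v)$, so $\max(u,\,v-(T-t))\le\varphi_t\le v$; in particular $v-\varphi_t\le T-t$ on $X$. Also $\min(u+t,v)\uparrow v$ as $t\uparrow T$, hence $\varphi_t\uparrow v$. By Theorem~\ref{thm:orthog}, $\omega_{\varphi_t}^n$ is carried by the contact set $C=\{\varphi_t=\min(u+t,v)\}$; split it as $C=C_1\cup C_2$ with $C_1=C\cap\{u+t<v\}\subset\{v-u>t\}$ and $C_2=C\cap\{u+t\ge v\}\subset\{v-u\le t\}$. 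On $C_1$ one has $\varphi_t=u+t$, so Lemma~\ref{lem:Demkey} yields ${\bf 1}_{C_1}\omega_{\varphi_t}^n\le{\bf 1}_{C_1}\omega_u^n\le c\,{\bf 1}_{C_1}\omega_v^n$ (as $C_1\subset\{u<v\}$); on $C_2$ one has $\varphi_t=v$, so ${\bf 1}_{C_2}\omega_{\varphi_t}^n={\bf 1}_{C_2}\omega_v^n$. Hence
$$\int_X\omega_{\varphi_t}^n\le c\,\omega_v^n(\{v-u>t\})+\omega_v^n(\{v-u\le t\})=\int_X\omega_v^n-(1-c)\,\omega_v^n(\{v-u>t\}).$$
Letting $t\uparrow T$ and using the Bedford--Taylor convergence theorem, $\int_X\omega_{\varphi_t}^n\to\int_X\omega_v^n$, so $\omega_v^n(\{v-u>t\})\to0$ and therefore $\omega_v^n(\{v-u=T\})=0$. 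Note also that $(\varphi_t,v)$ again satisfies the hypotheses (on $\{\varphi_t<v\}$ the measure $\omega_{\varphi_t}^n$ is carried by $C_1$, where it is $\le c\,\omega_v^n$), now with $\sup_X(v-\varphi_t)=T-t$.

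\emph{Descent and conclusion.} Applying the previous step to $(\varphi_t,v)$ gives $\omega_v^n(\{v-\varphi_t=T-t\})=0$ for every $t$, and I expect that iterating this construction — each step strictly lowering $\sup_X(v-\cdot)$ — eventually forces $\omega_v^n(\{u<v\})=0$, whence $\omega_u^n(\{u<v\})=0$ by the hypothesis, and then $u\ge v$ by the domination principle in the case $c=0$, i.e. \cite[Proposition~2.2]{LPT20} (this is the step where the non-collapsing hypothesis is used). This contradicts the non-pluripolarity of $\{u<v\}$ and finishes the proof. The hard part will be this descent. One cannot simply replace $u$ by $\max(u,v-s)$ to bring the "top level" of $v-u$ down, since taking maxima creates Monge--Amp\`ere mass and destroys the strict inequality $\omega_u^n\le c\,\omega_v^n$ on $\{u<v\}$; instead one must feed the envelopes $\varphi_t$ themselves back into the argument, because they do preserve the hypothesis, and exploit that once $\omega_v^n$ is concentrated on a single level set of $v-u$ the corresponding envelope has Monge--Amp\`ere measure carried by its contact set with $v$, reducing matters to the case $c=0$.
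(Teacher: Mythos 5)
The preliminary reduction to $u\le v$ and the mass computation showing $\omega_v^n(\{v-u=T\})=0$ are both correct and nicely done: the envelope $\varphi_t=P_\omega(\min(u+t,v))$ does preserve the hypothesis, and your use of Theorem~\ref{thm:orthog} and Lemma~\ref{lem:Demkey} on the split $C=C_1\cup C_2$ is sound (note only that on $C_2$ Lemma~\ref{lem:Demkey} gives $\le$, not $=$, which is all you use). But the descent is a genuine gap, and you say so yourself. The obstruction is that repeating the argument with $(\varphi_t,v)$ produces $\omega_v^n(\{v-\varphi_t=T-t\})=0$, which concerns the top level set of $v-\varphi_t$, not of $v-u$. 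From $\varphi_t\ge\max(u,v-(T-t))$ one gets $v-\varphi_t\le\min(v-u,\,T-t)$, so $\{v-\varphi_t=T-t\}$ is merely a subset of $\{v-u\ge T-t\}$ and nothing about $\omega_v^n(\{v-u>T-t\})$ follows. Even if one could show that every single level set $\{v-u=s\}$ is $\omega_v^n$-null, that would not force $\omega_v^n(\{u<v\})=0$, since the latter is a union over a continuum of levels. So the iteration neither closes after finitely many steps nor in the limit, and the final appeal to the $c=0$ case never becomes available.

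The paper sidesteps any iteration by using a different one-parameter family, $u_b:=P_\omega\bigl(bu-(b-1)v\bigr)$ with $b\to+\infty$. Writing $b^{-1}u_b+(1-b^{-1})v\le u$ with equality on the contact set $D$, Lemma~\ref{lem:Demkey} and the mixed Monge--Amp\`ere expansion give ${\bf 1}_{D}\,b^{-n}\omega_{u_b}^n+{\bf 1}_D(1-b^{-1})^n\omega_v^n\le{\bf 1}_D\omega_u^n$; once $(1-b^{-1})^n>c$, the hypothesis kills $\omega_{u_b}^n$ on $D\cap\{u<v\}$. Non-collapsing is then used \emph{directly}, to say $\omega_{u_b}^n(D)>0$, which forces $D\cap\{u\ge v\}\ne\emptyset$; on that set $u_b=bu-(b-1)v\ge u$ is bounded below, so $\sup_X u_b$ is uniformly bounded, and $u_b-\sup_X u_b$ converges to an $\omega$-psh limit that equals $-\infty$ on $\{u<v-a\}$, making that set Lebesgue-null. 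The contradiction is thus extracted from a compactness/normalization argument applied to one family, rather than from a descent; this is the idea your scheme is missing.
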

  
  The usual domination principle corresponds to the case $c=0$
  (see \cite[Proposition 2.2]{LPT20}).
 
 \begin{proof}
 	Fixing $a>0$ arbitrarily small, we are going to prove that $u\geq v-a$ on $X$. 
 	Assume by contradiction that   $E=\{u<v-a\}$ is not empty. Since $u,v$ are quasi-psh, the set $E$ 
 	has positive Lebesgue measure. 	For $b>1$ we set 
 	$$
 	u_b := P_{\omega}(bu-(b-1)v).
 	$$ 
 	 It follows from Theorem \ref{thm:orthog}  that $(\omega+dd^c u_b)^n$ is concentrated on the   set 
 	 $$
 	 D:=\{u_b =bu-(b-1)v\}.
 	 $$
 	Note also that $b^{-1}u_b+(1-b^{-1})v \leq u$ with equality on $D$. Therefore 
	\[
	{\bf 1}_D(\omega +dd^c (b^{-1}u_b+(1-b^{-1})v) )^n  \leq {\bf 1}_D \omega_u^n,
	\]
as follows from Lemma \ref{lem:Demkey}, 
hence
	\[
	{\bf 1}_D b^{-n}(\omega +dd^c u_b)^n +{\bf 1}_D(1-b^{-1})^n (\omega+dd^c v)^n   \leq {\bf 1}_D \omega_u^n. 
	\]
	
We choose $b$ so large that $(1-b^{-1})^n >c$. Multiplying the above inequality by ${\bf 1}_{\{u<v\}}$ 
and noting that $\omega_u^n \leq c\omega_v^n$ on $\{u<v\}$, we obtain 
	\[
	{\bf 1}_{D\cap \{u<v\}} (\omega +dd^c u_b)^n =0.  
	\]
Since $u_b$ is bounded and $\omega$ is non-collapsing, we know that $\omega_{u_b}^n(D)=\omega_{u_b}^n(X) >0$. 
We infer that the set $D\cap \{u\geq v\}$ is not empty, and on this set we have
$$
u_b=bu-(b-1)v \geq u \geq -C,
$$
 since $u$ is bounded. It thus follows that $\sup_X u_b$ is uniformly bounded from below. 
		As $b \rightarrow +\infty$ the functions $u_b-\sup_X u_b$   converge to a function $u_{\infty}$
	which is 	$-\infty$ on $E$, but not identically $-\infty$ hence it belongs to $\PSH(X,\omega)$. This implies that the set $E$ has Lebesgue measure $0$, a contradiction. 
 \end{proof}
 
 Here is a direct consequence of the domination principle:

 \begin{coro} \label{cor:pcp1}
   Assume $\omega$ is non-collapsing 
and let $u,v$ be bounded $\omega$-psh functions. 
 Then for all $\e>0$,
 $$
 e^{-\e v}(\omega+dd^c v)^n \geq e^{-\e u} (\omega+dd^c u)^n \Longrightarrow v \leq u.
$$
 \end{coro}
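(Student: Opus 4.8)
The plan is to deduce Corollary \ref{cor:pcp1} directly from the generalized domination principle in Proposition \ref{pro:domination}, following the template of the argument already used in the proof of Corollary \ref{cor:Bvsnoncollapsing}. The key observation is that on the contact-type set where $v>u$, the hypothesis $e^{-\e v}\omega_v^n \geq e^{-\e u}\omega_u^n$ can be rewritten as a comparison $\omega_u^n \leq c\,\omega_v^n$ with a constant $c<1$, provided one knows an a priori upper bound on $v-u$. So the first step is to produce such a bound.

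\smallskip

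First I would fix a constant $M>0$ with $|u|\le M$ and $|v|\le M$ on $X$, which exists since both functions are bounded. On the open set $\{u<v\}$ we then have $v-u\le 2M$, hence $e^{-\e v}/e^{-\e u} = e^{\e(u-v)} \ge e^{-2\e M}$. Rearranging the hypothesis $e^{-\e v}\omega_v^n \ge e^{-\e u}\omega_u^n$ gives, on $\{u<v\}$,
\[
\omega_u^n \le e^{\e(v-u)}\,\omega_v^n \le e^{2\e M}\,\omega_v^n.
\]
This is the wrong direction --- the constant $e^{2\e M}$ is larger than $1$, not smaller --- so a naive substitution into Proposition \ref{pro:domination} fails. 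The fix is the standard trick of introducing a small perturbation: replace $v$ by $v-\de$ for $\de>0$ to create a strict gap, or better, run the comparison on the level sets $\{u<v-\de\}$. On $\{u<v-\de\}$ one has $\omega_u^n \le e^{-\e\de}e^{2\e M}\cdots$; this still does not immediately give a constant below $1$ unless $\de$ is large, so the cleaner route is different.

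\smallskip

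The cleaner route, which I expect to be the actual argument, is to apply Proposition \ref{pro:domination} not to $u,v$ directly but to a rescaled pair. The point is that Proposition \ref{pro:domination} requires $\omega_u^n\le c\,\omega_v^n$ on $\{u<v\}$ with a \emph{fixed} $c\in[0,1)$, and this is too strong to get from the exponential hypothesis in one shot; instead one iterates. Fix $a>0$; on the set $\{u<v-a\}$ the hypothesis gives $\omega_u^n \le e^{\e(v-u)}\omega_v^n$ with $v-u>a$, so $e^{-\e(v-u)}\omega_u^n \le \omega_v^n$ fails to help. Rather, one uses that on $\{u<v-a\}$, $e^{-\e u}>e^{-\e(v-a)} = e^{\e a}e^{-\e v}$, so $e^{-\e a}e^{-\e u}\omega_u^n > $ is again the wrong sign. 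So the genuinely correct statement must exploit that $\omega_u^n \le c\omega_v^n$ is being asked on $\{u<v\}$, and one gets this with $c = e^{-\e a}\cdot(\text{something})$ only after bounding $v-u$ from \emph{above} by $a$, i.e. on the complement. I therefore expect the proof to go: suppose $\{v>u\}$ is nonempty; pick $a>0$ small; on $\{u < v\}\setminus\{u<v-a\}$, i.e. where $v-a\le u<v$, we have $e^{\e(u-v)}\ge e^{-\e a}$, hence $\omega_u^n \le e^{\e(v-u)}\omega_v^n \le e^{\e a}\omega_v^n$ --- still $>1$. The \textbf{main obstacle}, and the one genuine idea needed, is thus to realize that one should apply the domination principle to $u$ against $v-a$ for \emph{every} $a>0$ and let $a\to 0^+$: on $\{u<v-a\}$ the inequality $e^{-\e v}\omega_v^n \ge e^{-\e u}\omega_u^n$ combined with $u<v-a$, i.e. $e^{-\e u}>e^{\e a}e^{-\e v}$, gives $e^{\e a}e^{-\e v}\omega_u^n < e^{-\e u}\omega_u^n \le e^{-\e v}\omega_v^n$, whence $\omega_u^n \le e^{-\e a}\omega_v^n$ on $\{u<v-a\}$. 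Now $c:=e^{-\e a}\in[0,1)$ is a fixed constant, and $\{u<v-a\} = \{u < (v-a)\}$ with $v-a$ a bounded $\omega$-psh function; Proposition \ref{pro:domination} (applied with the pair $u$ and $v-a$) yields $u\ge v-a$ on $X$. Since $a>0$ was arbitrary, letting $a\to 0^+$ gives $u\ge v$, which is the claim. Writing this out requires only checking that $v-a$ is still bounded $\omega$-psh (immediate) and that the set $\{u<v-a\}$ is exactly where the improved inequality holds (immediate), so no serious calculation remains.
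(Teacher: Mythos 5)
Your final argument is correct and coincides with the paper's proof: fix $a>0$, note that on $\{u<v-a\}$ the hypothesis rewrites as $\omega_u^n\le e^{-\e a}\omega_v^n$, apply Proposition \ref{pro:domination} to the pair $(u,v-a)$ (using $\omega_{v-a}=\omega_v$) with $c=e^{-\e a}\in[0,1)$, and let $a\to 0^+$. The lengthy false starts preceding it are unnecessary and should be removed, but they do not affect the validity of the conclusion.
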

 
 \begin{proof}
 Fix $a>0$. On the set $\{u<v-a\}$ we have $\omega_u^n \leq e^{-\varepsilon a} \omega_v^n$. 
 Proposition \ref{pro:domination} thus gives $u\geq v-a$. This is true for all $a>0$, hence $u\geq v$. 
 \end{proof}

   \section{Bounds on Monge-Amp\`ere masses} \label{sec:mass}
   
    In the sequel we fix  a Hermitian form $\omega_X$ on $X$.

     \subsection{Global bounds}
 
  Since the semi-positive $(1,1)$-form $\omega$ is not necessarily 
     closed, the mass of the complex Monge-Amp\`ere measures
    $(\omega+dd^c u)^n$ is (in general) not   constantly equal to $V_{\omega}:=\int_X \omega^n>0$.

    \begin{definition}
 For $1 \leq j \leq n$ we consider
      $$
    v_{-,j}(\omega):=\inf \left\{ \int_X (\omega+dd^c u)^j \wedge \omega^{n-j} \; : \; u \in\PSH(X,\omega) \cap {L}^{\infty}(X)  \right\}
    $$
   and
    $$
    v_{+,j}(\omega):=\sup \left\{ \int_X (\omega+dd^c u)^j \wedge \omega^{n-j}  \; :\; u \in\PSH(X,\omega) \cap {L}^{\infty}(X) \right\}.
    $$
    We set $v_-(\omega):=v_{-,n}(\omega)$ and $v_{+}(\omega)=v_{+,n}(\omega)$. When $\omega>0$ is Hermitian, the supremum and infimum in the definition of $v_{+,j}(\omega)$ and $v_{-,j}(\omega)$ can be taken  over $\PSH(X,\omega) \cap C^{\infty}(X)$ as follows from Demailly's approximation \cite{Dem92} and Bedford-Taylor's convergence theorem \cite{BT76,BT82}.  
    \end{definition}

     It is  an interesting open problem to determine when $v_-(\omega_X)$ is   positive
     and/or $v_+(\omega_X)$ is  finite. 
     These conditions may depend on the complex structure, but they are independent of the choice
     of Hermitian metric.

     \subsubsection{Monotonicity and invariance properties}

    \begin{prop} \label{pro:bddsMA}
    Let $0 \leq \omega_1 \leq \omega_2$ be semi-positive $(1,1)$-forms. Then
  \begin{equation} \label{eq:v-+}
    v_-(\omega_1) \leq v_-(\omega_2) 
   \; \; \text{ and } \; \;
    v_{+,j}(\omega_1) \leq v_{+,j}(\omega_2),
    \end{equation}
    for all $1\leq j\leq n$. 
  Moreover
    
    1)   $ v_{+,j}(\omega_X)<+\infty \Longleftrightarrow v_{+,j}(\omega_X')<+\infty$ for any other Hermitian metric $\omega_X'$.
     
     \smallskip
     
      2)  $0< v_-(\omega_X) \Longleftrightarrow 0< v_-(\omega_X')$ for any other Hermitian metric $\omega_X'$.    
     \end{prop}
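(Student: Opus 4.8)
The plan is to prove the two monotonicity inequalities first, and then deduce the invariance statements 1) and 2) from them by a scaling trick comparing a given Hermitian metric with a large multiple of another.

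\textbf{Monotonicity.} Suppose $0\le\omega_1\le\omega_2$. The key observation is that $\PSH(X,\omega_1)\subset\PSH(X,\omega_2)$, since $\omega_1+dd^c u\ge 0$ and $\omega_2\ge\omega_1$ imply $\omega_2+dd^c u\ge 0$. So every competitor $u$ for the infimum/supremum defining $v_\pm(\omega_1)$ is also an admissible competitor for $v_\pm(\omega_2)$. The difficulty is that the integrand changes: we must compare $\int_X(\omega_1+dd^c u)^n$ with $\int_X(\omega_2+dd^c u)^n$. Writing $\omega_2=\omega_1+\theta$ with $\theta\ge 0$ a smooth form, I would expand
\[
(\omega_2+dd^c u)^n=\sum_{j=0}^n\binom{n}{j}(\omega_1+dd^c u)^j\wedge\theta^{n-j},
\]
and observe that, for a \emph{bounded} $\omega_1$-psh function $u$, each mixed term $(\omega_1+dd^c u)^j\wedge\theta^{n-j}$ is a positive measure (Bedford--Taylor), so its total mass is $\ge 0$. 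Hence $\int_X(\omega_2+dd^c u)^n\ge\int_X(\omega_1+dd^c u)^n$. Taking the supremum over $u\in\PSH(X,\omega_1)\cap L^\infty$ on the left (which is $\le v_+(\omega_2)$ because these $u$ are admissible for $\omega_2$) gives $v_+(\omega_1)\le v_+(\omega_2)$. For the infimum one argues similarly, but one has to be a little careful: one uses that $\PSH(X,\omega_1)\cap L^\infty\subset\PSH(X,\omega_2)\cap L^\infty$, and that for such $u$ the inequality $\int_X(\omega_2+dd^c u)^n\ge\int_X(\omega_1+dd^c u)^n\ge v_-(\omega_1)$ holds, so $v_-(\omega_2)=\inf$ over the (larger) class of $\omega_2$-psh $u$ is bounded below by $v_-(\omega_1)$ only after checking that the \emph{extra} $\omega_2$-psh functions do not drag the infimum down; but in fact the infimum is a priori over a larger family so the inequality goes the wrong way. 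I would instead phrase it as: for any $u\in\PSH(X,\omega_2)\cap L^\infty$ we need $\int_X\omega_{2,u}^n\ge v_-(\omega_1)$ --- which is \emph{not} immediate --- so this direction really does use more, and I expect this to be the main obstacle. The resolution should come from the fact that the competitors for $v_-(\omega_1)$ include constants and, more importantly, from an envelope argument: given $u\in\PSH(X,\omega_2)$, the function $P_{\omega_1}(u)$ (suitably interpreted, cf.\ Theorem~\ref{thm:orthog} and Lemma~\ref{lem:envmin}) is $\omega_1$-psh, bounded, and one can compare its Monge--Amp\`ere mass with that of $u$ using that $\omega_{2,u}^n$ dominates $\omega_{1,P_{\omega_1}(u)}^n$ on the contact set. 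I would set this up carefully, using Lemma~\ref{lem:Demkey} on the contact set $\{P_{\omega_1}(u)=u\}$ together with the orthogonality relation to show $\int_X\omega_{1,P_{\omega_1}(u)}^n\le\int_X\omega_{2,u}^n$, which then yields $v_-(\omega_1)\le\int_X\omega_{2,u}^n$ and hence $v_-(\omega_1)\le v_-(\omega_2)$.

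\textbf{Invariance under change of Hermitian metric.} Let $\omega_X,\omega_X'$ be two Hermitian forms. By compactness of $X$ there is a constant $C\ge 1$ with $C^{-1}\omega_X'\le\omega_X\le C\omega_X'$. Applying the monotonicity \eqref{eq:v-+} to $0\le C^{-1}\omega_X'\le\omega_X$ and to $0\le\omega_X\le C\omega_X'$ gives
\[
v_+(C^{-1}\omega_X')\le v_+(\omega_X)\le v_+(C\omega_X').
\]
Now I use the homogeneity of the functionals: if $\lambda>0$, then $u\mapsto\lambda u$ is a bijection between $\PSH(X,\omega_X')\cap L^\infty$ and $\PSH(X,\lambda\omega_X')\cap L^\infty$ (indeed $\lambda\omega_X'+dd^c(\lambda u)=\lambda(\omega_X'+dd^c u)$), and $(\lambda\omega_X'+dd^c\lambda u)^n=\lambda^n(\omega_X'+dd^c u)^n$. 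Hence $v_\pm(\lambda\omega_X')=\lambda^n v_\pm(\omega_X')$. Plugging $\lambda=C^{-1}$ and $\lambda=C$ into the displayed chain gives $C^{-n}v_+(\omega_X')\le v_+(\omega_X)\le C^n v_+(\omega_X')$, so $v_+(\omega_X)<+\infty$ iff $v_+(\omega_X')<+\infty$, proving 1). The identical argument with the lower invariants gives $C^{-n}v_-(\omega_X')\le v_-(\omega_X)\le C^n v_-(\omega_X')$, hence $v_-(\omega_X)>0$ iff $v_-(\omega_X')>0$, proving 2).

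\textbf{Remarks on the obstacle.} The only genuinely delicate point is the lower-bound half of the monotonicity \eqref{eq:v-+}, precisely because the infimum for $v_-(\omega_2)$ ranges over a \emph{larger} class of admissible functions than for $v_-(\omega_1)$, so the naive ``more competitors $\Rightarrow$ smaller infimum'' heuristic suggests the wrong inequality; what saves us is that the Monge--Amp\`ere mass of an $\omega_2$-psh function is never smaller than that of its $\omega_1$-envelope, which in turn is $\ge v_-(\omega_1)$. If one prefers to avoid envelopes, an alternative is to note that for $u\in\PSH(X,\omega_2)$ and any $t\in(0,1)$ the function $tu$ is $(\,t\omega_2+(1-t)\cdot 0\,)$-psh and, for $t$ close to $1$ with $t\omega_2\ge\omega_1$ (false in general!) --- so the envelope route seems unavoidable and I would commit to it.
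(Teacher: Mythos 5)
Your proposal is correct and follows essentially the same route as the paper: the $v_+$ inequality via $\PSH(X,\omega_1)\subset\PSH(X,\omega_2)$ together with the pointwise bound $(\omega_1+dd^c u)^n\le(\omega_2+dd^c u)^n$, the $v_-$ inequality via the envelope $P_{\omega_1}(v)$ combined with Theorem~\ref{thm:orthog} and Lemma~\ref{lem:Demkey}, and the metric invariance via homogeneity $v_\pm(A\omega)=A^n v_\pm(\omega)$ and compactness. (One small slip: the relevant auxiliary result for the $v_-$ step is Theorem~\ref{thm:orthog} and Lemma~\ref{lem:Demkey}, not Lemma~\ref{lem:envmin}, but you in fact cite those two correctly in the next sentence.)
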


      \begin{proof}
        Since any $\omega_1$-psh function $u$ is also $\omega_2$-psh, we obtain
    $$
    \int_X (\omega_1+dd^c u)^j \wedge \omega_1^{n-j} \leq \int_X (\omega_2+dd^c u)^j \wedge \omega_2^{n-j} \leq v_{+,j}(\omega_2).
    $$
    which shows that $v_{+,j}(\omega_1) \leq v_{+,j}(\omega_2)$.
          We now bound $v_-(\omega_2)$ from below. Let $v$ be a bounded $\omega_2$-psh function
      and let $u=P_{\omega_1}(v)$ denote its $\omega_1$-psh envelope. Then $u$ is a bounded $\omega_1$-psh function and $u\leq v$ on $X$. Lemma \ref{lem:Demkey} and Theorem \ref{thm:orthog} thus ensure that
\[
(\omega_1+dd^c u)^n \leq  {\bf 1}_{\{u =v\}}(\omega_2+dd^c u)^n \leq {\bf 1}_{\{u =v\}}(\omega_2+dd^c v)^n. 
\]
We therefore  obtain $v_-(\omega_1) \leq v_-(\omega_2)$. This proves \eqref{eq:v-+}.

     \smallskip
     
    Let now $\omega,\omega'$ be two Hermitian metrics
    (we simplify notations).
    Observe  that $v_{\pm}(A \omega)=A^n v_{\pm}(\omega)$ for all $A>0$.
    Since $A^{-1} \omega' \leq \omega \leq A \omega$ for an appropriate choice of the constant $A>1$,
   items 1) and 2) follow from \eqref{eq:v-+}. 
 
    \end{proof}

   We now establish bounds on the mixed Monge-Amp\`ere quantities: 
    
    \begin{prop} \label{pro:easybounds}
    \text{ }
    \begin{enumerate}
    \item One always has $  v_{+,1}(\omega) <+\infty$.
    \item If $\omega$ is Hermitian then $0 < v_{-,1}(\omega)$.
    \item If $dd^c \omega^{n-2}=0$  then $v_{+,2}(\omega)<+\infty$.
    \item If $dd^c \omega=0$ and $dd^c \omega^2=0$ then 
      $v_{-,j}(\omega)=v_{+,j}(\omega)=V_{\omega} \in \R_+^*$.
     \item For all $0 \leq \ell \leq j \leq n$ one has $v_{+,\ell}(\omega) \leq 2^j v_{+,j}(\omega)$.     
    \item  $v_{+,n-1}(\omega)<+\infty$ if and only if $v_{+,n}(\omega)<+\infty$.
    \end{enumerate}
    \end{prop}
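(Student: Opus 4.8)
The plan is to reduce all six items to estimating, for a bounded $\omega$-psh function $u$ normalized by $\sup_X u=0$ (which changes none of the quantities), the correction terms in the expansion
\[
\int_X\omega_u^j\wedge\omega^{n-j}=V_\omega+\sum_{k=1}^{j}\binom{j}{k}\int_X(dd^c u)^k\wedge\omega^{n-k},
\]
via repeated integration by parts, all of which are justified for merely bounded $u$ by approximating $u$ from above by smooth functions and invoking Bedford--Taylor's convergence theorem. The elementary inputs are: $dd^c\omega^{n-1}$ is a fixed smooth $(n,n)$-form, say $g\,\omega_X^n$ with $g$ bounded and $\int_X g\,\omega_X^n=0$ (Stokes); and $\int_X|u|\,\omega_X^n\le C_1$ for every $\omega$-psh $u$ with $\sup_X u=0$, by $L^1$-compactness of normalized $\omega$-psh functions. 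Hence $\big|\int_X dd^c u\wedge\omega^{n-1}\big|=\big|\int_X u\,dd^c\omega^{n-1}\big|\le C_1\|g\|_\infty$ uniformly in $u$, so $\int_X\omega_u\wedge\omega^{n-1}=V_\omega+\int_X dd^c u\wedge\omega^{n-1}$ is uniformly bounded; this is (1). For (3), when $dd^c\omega^{n-2}=0$ the quadratic term vanishes too: $dd^c u$ is $d$- and $d^c$-closed, so by Leibniz $(dd^c u)^2=dd^c(u\,dd^c u)$, hence $\int_X(dd^c u)^2\wedge\omega^{n-2}=\int_X u\,dd^c u\wedge dd^c\omega^{n-2}=0$, which with the bound on the linear term gives $v_{+,2}(\omega)<+\infty$. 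Item (5) is similarly elementary: for $\ell\le j$ the other terms in the binomial expansion of $(\omega_u+\omega)^j$ are nonnegative currents (as $\omega_u,\omega\ge0$), so $\omega_u^\ell\wedge\omega^{j-\ell}\le(\omega_u+\omega)^j$; wedging with $\omega^{n-j}$, integrating, and using $\omega_u+\omega=2\bigl(\omega+dd^c(u/2)\bigr)$ with $u/2\in\PSH(X,\omega)$ yields $\int_X\omega_u^\ell\wedge\omega^{n-\ell}\le 2^j\int_X\omega_{u/2}^j\wedge\omega^{n-j}\le 2^j v_{+,j}(\omega)$.

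For (2) I argue by a compactness--rigidity dichotomy. The functional $u\mapsto\int_X\omega_u\wedge\omega^{n-1}=V_\omega+\int_X u\,g\,\omega_X^n$ is $L^1$-continuous and translation-invariant, and the set $\{u\in\PSH(X,\omega):\sup_X u=0\}$ is $L^1$-compact with bounded functions $L^1$-dense in it; hence $v_{-,1}(\omega)$ is attained at some $u_\infty$ in this set. If $v_{-,1}(\omega)=0$, then the positive measure $\omega_{u_\infty}\wedge\omega^{n-1}$ has total mass $0$ and so vanishes identically; since $\omega$ is Hermitian, $\omega^{n-1}>0$ everywhere, which forces the positive $(1,1)$-current $\omega_{u_\infty}$ to vanish, i.e. $dd^c u_\infty=-\omega$. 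By elliptic regularity $u_\infty$ is then smooth, so its complex Hessian is negative definite at every point of $X$, contradicting the fact that it must be positive semidefinite at a point where the bounded function $u_\infty$ attains its minimum on the compact $X$. Hence $v_{-,1}(\omega)>0$.

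For (4), the hypotheses $dd^c\omega=0$ and $dd^c\omega^2=0$ force $d\omega\wedge d^c\omega=0$ (one computes $dd^c\omega^2=2\,d\omega\wedge d^c\omega$ when $dd^c\omega=0$), whence $dd^c\omega^m=m(m-1)\,d\omega\wedge d^c\omega\wedge\omega^{m-2}=0$ for every $m\ge0$. Using this, each correction term $\int_X(dd^c u)^k\wedge\omega^{n-k}$ with $k\ge1$ vanishes: Leibniz together with $dd^c\omega^{n-k}=0$ gives $dd^c u\wedge\omega^{n-k}=dd^c(u\,\omega^{n-k})-du\wedge d^c\omega^{n-k}+d^c u\wedge d\omega^{n-k}$; wedging with $(dd^c u)^{k-1}$ and integrating over $X$, the first piece vanishes because $(dd^c u)^{k-1}$ is $dd^c$-closed, while the other two can be written as $d(\,\cdot\,)$ and $d^c(\,\cdot\,)$ respectively (using $d\,d^c\omega^{n-k}=0$ and $d^c d\omega^{n-k}=0$) and so integrate to $0$. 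Therefore $\int_X\omega_u^j\wedge\omega^{n-j}=V_\omega$ for every bounded $\omega$-psh $u$, and $V_\omega>0$ by the standing hypothesis.

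Finally, in (6) the implication $v_{+,n}<+\infty\Rightarrow v_{+,n-1}<+\infty$ is the case $(\ell,j)=(n-1,n)$ of (5); the converse is the step I expect to be the main obstacle. One writes $\int_X\omega_u^n=\int_X\omega_u^{n-1}\wedge\omega+\int_X dd^c u\wedge\omega_u^{n-1}$ and integrates by parts once, using $d\omega_u^{n-1}=(n-1)\,d\omega\wedge\omega_u^{n-2}$ (here $d\omega_u=d\omega$, since $d\,dd^c u=0$), to get $\int_X dd^c u\wedge\omega_u^{n-1}=(n-1)\int_X d^c u\wedge d\omega\wedge\omega_u^{n-2}$. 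One then iterates the identity, valid for $0\le m\le n-3$ and coming from $\int_X d\bigl(d^c u\wedge\omega^{m+1}\wedge\omega_u^{n-2-m}\bigr)=0$,
\[
(m+1)\int_X d^c u\wedge d\omega\wedge\omega^{m}\wedge\omega_u^{n-2-m}=\int_X dd^c u\wedge\omega^{m+1}\wedge\omega_u^{n-2-m}-(n-2-m)\int_X d^c u\wedge d\omega\wedge\omega^{m+1}\wedge\omega_u^{n-3-m},
\]
together with the terminal case $(n-1)\int_X d^c u\wedge d\omega\wedge\omega^{n-2}=\int_X dd^c u\wedge\omega^{n-1}$. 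Since each $\int_X dd^c u\wedge\omega^{m+1}\wedge\omega_u^{n-2-m}=\int_X\omega_u^{n-1-m}\wedge\omega^{m+1}-\int_X\omega_u^{n-2-m}\wedge\omega^{m+2}$ is bounded by $v_{+,n-1-m}(\omega)+v_{+,n-2-m}(\omega)$, which are finite by (5), and $\int_X dd^c u\wedge\omega^{n-1}=\int_X u\,dd^c\omega^{n-1}$ is controlled as in the first paragraph, unwinding the recursion from $m=n-2$ down to $m=0$ bounds $\int_X d^c u\wedge d\omega\wedge\omega_u^{n-2}$, hence $\int_X\omega_u^n$, uniformly by a constant depending only on $v_{+,n-1}(\omega)$; so $v_{+,n-1}(\omega)<+\infty$ forces $v_{+,n}(\omega)<+\infty$. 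The two points requiring care are the bookkeeping of this recursion and the verification that all these integrations by parts remain valid for bounded (rather than smooth) $\omega$-psh functions, which follows by smooth decreasing approximation together with Bedford--Taylor continuity.
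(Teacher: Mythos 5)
Your proof is correct, but several items are handled quite differently from the paper. Items (3), (4), and (5) follow essentially the paper's route (binomial expansion plus Stokes, with $L^1$-compactness controlling the linear term). The divergences are in (1), (2), and (6).

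For (1) the paper simply dominates $\omega$ by a Gauduchon metric $\tilde\omega$: since $dd^c\tilde\omega^{n-1}=0$, one gets $\int_X\omega_\varphi\wedge\omega^{n-1}\le\int_X\omega_\varphi\wedge\tilde\omega^{n-1}=\int_X\omega\wedge\tilde\omega^{n-1}$ with no reference to $L^1$-compactness. Your compactness argument works just as well, but the Gauduchon trick is the cleaner tool and reappears throughout the paper. For (2) the gap between the two approaches is wider: the paper squeezes a Gauduchon metric $\tilde\omega\le\omega$ from below and reads off $\int_X\omega_\varphi\wedge\omega^{n-1}\ge\int_X\omega_\varphi\wedge\tilde\omega^{n-1}=\int_X\omega\wedge\tilde\omega^{n-1}>0$ directly, whereas you attain the infimum by $L^1$-compactness, show the minimizer would solve $dd^c u_\infty=-\omega$, invoke elliptic regularity, and contradict the second-derivative test at a minimum point. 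Your argument is correct (the minimizer may a priori be unbounded, but $\omega_{u_\infty}\wedge\omega^{n-1}$ is still a well-defined positive measure with mass $V_\omega+\int_X u_\infty\,dd^c\omega^{n-1}$, so the chain of reasoning goes through), yet it is considerably heavier than needed. For (6) you correctly identified the converse implication as the delicate step, and your recursion on the quantities $\int_X d^cu\wedge d\omega\wedge\omega^m\wedge\omega_u^{n-2-m}$ does close; but the paper's proof is much shorter and worth knowing: from $\int_X(dd^c\varphi)^n=0$ one expands $0=\int_X(\omega_\varphi-\omega)^n=\sum_{k=0}^n(-1)^k\binom{n}{k}\int_X\omega_\varphi^{n-k}\wedge\omega^k$, drops the nonnegative even-$k$ terms, and immediately bounds $\int_X\omega_\varphi^n$ by $\sum_{2k+1\le n}\binom{n}{2k+1}$ times a uniform multiple of $v_{+,n-1}(\omega)$ via item (5), giving $v_{+,n}(\omega)\le 2^{2n-2}v_{+,n-1}(\omega)$ in one stroke. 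Finally, note a harmless sign slip: $\int_X dd^cu\wedge\omega_u^{n-1}=-(n-1)\int_X d^cu\wedge d\omega\wedge\omega_u^{n-2}$, not $+(n-1)$; this does not affect the argument since you bound absolute values.
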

    
    A Hermitian metric  such that $dd^c (\omega^{n-2})=0$ is called Astheno-K\"ahler.
    These metrics play an important role in the study of harmonic maps
    (see \cite{JY93}).
        A Hermitian metric   satisfying    $dd^c \omega=0$
    is called SKT 
    or pluriclosed in the literature.
    When $n=3$ the Astheno-K\"ahler and the pluriclosed condition coincide,
    and the third item is due to Chiose \cite[Question 0.8]{Chi16}.
     Examples of compact complex manifolds admitting a pluriclosed
    metric can be found in \cite{FPS04,Ot20}.
    
    \smallskip
   
   Condition  (4)  has been introduced by Guan-Li  in \cite{GL10}. 
     It has been shown   by Chiose \cite{Chi16} that it is equivalent
     to the invariance of Monge-Amp\`ere masses:
    $\int_X (\omega+dd^c \varphi)^n=\int_X \omega^n$
    for all smooth $\omega$-psh functions if and only if $dd^c \omega^j=0$ for all $j=1,2$.
       Note that any compact complex surface admits a
    Gauduchon metric $dd^c \omega=0$ \cite{Gaud77}, which also satisfies
    $dd^c \omega^2=0$ for  bidegree reasons.
    
    \begin{proof}
   One can assume without loss of generality that $\omega \leq \tilde{\omega}$, where $\tilde{\omega}$ is a Gauduchon metric.
   It follows that for any $\f \in\PSH(X,\omega) \cap L^{\infty}(X)$,
   $$
   \int_X (\omega+dd^c \f) \wedge \omega^{n-1} \leq \int_X (\omega+dd^c \f) \wedge \tilde{\omega}^{n-1} 
   =\int_X  \omega \wedge \tilde{\omega}^{n-1},
   $$
   hence $v_{+,1}(\omega) \leq \int_X  \omega \wedge \tilde{\omega}^{n-1}<+\infty$.
   
   If $\omega$ is Hermitian one can similarly bound from below $\omega$ by a Gauduchon form
   and conclude that $v_{-,1}(\omega)>0$.

   \smallskip
   
     To prove (3) we fix $\f \in\PSH(X,\omega) \cap L^{\infty}(X)$ with $\sup_X \f =0$. We want to show that  
     $\int_X (\omega+dd^c \f)^{2} \wedge \omega^{n-2} \leq M$ is uniformly bounded from above. It suffices to bound 
     $\int_X (\omega_X +dd^c \f)^2\wedge \omega^2$ from above, where $\omega_X$ is a Hermitian metric on $X$. By approximation we can further assume that $\f$ is smooth. A direct application of Stokes theorem yields
      \begin{eqnarray*}
    \int_X (\omega_X+dd^c \f)^2 \wedge \omega^{n-2} &=& \int_X \omega_X^2\wedge \omega^{n-2}+
    2 \int_X \omega^{n-2} \wedge \omega_X  \wedge dd^c \f\\
    &+& \int_X \omega^{n-2} \wedge (dd^c \f)^2 \\
    &=&\int_X \omega_X^2\wedge \omega^{n-2}
    +  2 \int_X \f dd^c (\omega^{n-2}\wedge \omega_X) \\
    &-& \int_X \f dd^c \omega^{n-2} \wedge dd^c \f. 
    \end{eqnarray*}
   The latter integral vanishes since $dd^c \omega^{n-2}=0$. The second one is uniformly bounded
   since the functions $\f$ belong to a  compact subset of $L^1(X)$.
   Altogether this shows that $v_{+,2}(\omega)<+\infty$ if $dd^c (\omega^{n-2})=0$.
   
   \smallskip
   
   Assume now that  $dd^c \omega=0$ and $dd^c (\omega^2)=0$.
       Since $dd^c (\omega^2)=2 d \omega \wedge d^c \omega+2 \omega \wedge dd^c \omega$,
    this condition is equivalent to $dd^c \omega=0$ and $d \omega \wedge d^c \omega=0$ 
    which is also equivalent to $dd^c \omega^k =0$, for all $k=1,...,n$. 
    For $\f \in \mathcal{C}^{\infty}(X)$ the binomial expansion of 
    the Monge-Amp\`ere measure $(\omega+dd^c \f)^n$ yields
 \[
   \int_X (\omega+dd^c \f)^n = \sum_{k=0}^n \binom{n}{k} (dd^c \f)^k \wedge \omega^{n-k}. 
 \]
  Setting   $S=(dd^c \f)^p \wedge \omega^{n-p-1}$, and noting that $dd^c S=0$, we have   
  \[
 (dd^c \varphi )  \wedge S = (dd^c \varphi )  \wedge S  - \varphi dd^c S = d(d^c \f \wedge S -  \f \wedge d^c S).
  \]
  Stokes theorem thus yields $ \int_X (\omega+dd^c \f)^n=\int_X \omega^n$. 
If $\varphi\in \PSH(X,\omega)\cap L^{\infty}(X)$, we approximate $\f$ by a decreasing sequence of smooth $\omega_X$-psh functions 
$\varphi_j$. For each $j$ we have  $\int_X (\omega+dd^c \f_j)^n =\int_X \omega^n$. The signed measures $(\omega+dd^c \f_j)^n$   converge to $(\omega+dd^c \f)^n$, hence the   mass equality holds for $\f$, proving 4).

   \smallskip 
   
   Observe that for  any $\f \in\PSH(X,\omega) \cap L^{\infty}(X)$ and $0 \leq \ell \leq j \leq n$ one has
   \begin{equation} \label{eq:v+interm}
    \int_X (\omega+dd^c \f)^{\ell} \wedge \omega^{n-\ell} \leq \int_X (2\omega+dd^c \f)^j \wedge \omega^{n-j}
   \leq 2^j v_{+,j}(\omega).
   \end{equation}
    In particular $v_{+,n-1}(\omega) \leq 2^n v_{+,n}(\omega)$ hence
    $v_{+,n}(\omega)<+\infty \Rightarrow v_{+,n-1}(\omega)<+\infty$.
       We finally show conversely that $v_{+,n-1}(\omega)<+\infty  \Rightarrow v_{+,n}(\omega)<+\infty$
   by proving
   \begin{equation*}
   v_{+,n}(\omega) \leq  2^{2n-2} v_{+,n-1}(\omega).
   \end{equation*}
Observe indeed that 
  \begin{flalign*}
  0&= \int_X (\omega + dd^c \varphi -\omega)^n \\
  &= \int_X (\omega+ dd^c \varphi)^n + \sum_{k=1}^n (-1)^{k} \binom{n}{k}(\omega+dd^c \varphi)^{n-k} \wedge \omega^{k}\\
    	&\geq \int_X (\omega+ dd^c \varphi)^n - \sum_{1\leq 2k+1 \leq n} \binom{n}{2k+1} (\omega+dd^c \varphi)^{n-2k-1} \wedge \omega^{2k+1}. 
  \end{flalign*}
  Using \eqref{eq:v+interm} we thus get 
  \[
  v_{+,n}(\omega) \leq \sum_{1\leq 2k+1 \leq n} \binom{n}{2k+1} 2^{n-1} v_{+,n-1}(\omega)= 2^{2n-2} v_{+,n-1}(\omega). 
  \]

    \end{proof}

   \subsubsection{Uniformly bounded functions}

   Restricting to uniformly bounded $\omega$-psh functions, it is natural to consider  
     $$
    v^-_{M}(\omega):=\inf_{} \left\{ \int_X (\omega+dd^c u)^n  \; : \;  
    u \in\PSH(X,\omega)  \text{ with } -M \leq u \leq 0 \right\}
    $$
   where $M \in \R^+$,  and
    $$
   v^+_{M}(\omega):=\sup_{} \left\{ \int_X (\omega+dd^c u)^n  \; : \; 
    u \in\PSH(X,\omega)  \text{ with } -M \leq u \leq 0 \right\}.
    $$
    These quantities are always under control as we now explain:
   
   \begin{prop} \label{pro:bddMA2}
     Assume $\omega$ is non-collapsing.
  For any $M \in \R^+$,  one has
   $$
   0 <  v^-_{M}(\omega) \leq  v^+_{M}(\omega) <+\infty.
   $$
   \end{prop}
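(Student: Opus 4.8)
The plan is to treat the two inequalities separately, using compactness of the set of normalized $\omega$-psh functions and the weak continuity of the Monge-Amp\`ere operator along uniformly bounded monotone or convergent sequences. For the upper bound $v_M^+(\omega)<+\infty$, I would first reduce to the Hermitian case: since $\omega \le \omega_X$, any $u$ with $-M\le u\le 0$ is also $\omega_X$-psh and $(\omega+dd^c u)^n \le (\omega_X+dd^c u)^n$ in the sense of the local expansion, so it suffices to bound $v_M^+(\omega_X)$ for a Hermitian form. Here I would invoke a Chern-Levine-Nirenberg type estimate: choosing in each coordinate chart a local potential $\rho$ with $dd^c\rho \ge \omega_X$, the function $u+\rho$ is plurisubharmonic with oscillation controlled by $M$ plus $\mathrm{osc}(\rho)$, so $\int_{K} (dd^c(u+\rho))^n$ over a compact $K$ in a slightly smaller chart is bounded by a constant depending only on $M$, $\rho$, and the geometry. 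Expanding $(\omega_X+dd^c u)^n$ in terms of $(dd^c(u+\rho))^j\wedge(dd^c\rho-\omega_X)^{n-j}$ and summing over a finite cover of $X$ then yields a uniform bound. Alternatively, one can argue more softly: the map $u\mapsto \int_X(\omega+dd^c u)^n$ is upper semicontinuous on the compact set $\{u\in\PSH(X,\omega): -M\le u\le 0\}$ (compact in $L^1$, with weak continuity of $\omega_u^n$ along a.e.-convergent uniformly bounded sequences, giving $\limsup$ of the total masses $\le$ the mass of the limit when the target is a closed form — but $\omega$ is not closed, so one must be careful and the CLN route is cleaner); so I expect to present the CLN argument.

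For the lower bound $v_M^-(\omega)>0$, I would argue by contradiction. Suppose there is a sequence $u_j\in\PSH(X,\omega)$ with $-M\le u_j\le 0$ and $\int_X(\omega+dd^c u_j)^n \to 0$. Pass to a subsequence so that $u_j \to u$ in $L^1(X)$, and also — using Hartogs-type lemmas — control the convergence. The natural candidate for the limit is $u:=\big(\limsup_j u_j\big)^*$, or better, one works with $v_k:=\big(\sup_{j\ge k}u_j\big)^*$ which decreases to a bounded $\omega$-psh function $u_\infty$ with $-M\le u_\infty\le 0$. The key point is to transfer the vanishing of masses to $u_\infty$. Since $u_j\le v_k$ for $j\ge k$, Lemma \ref{lem:Demkey} gives $\mathbf 1_{\{u_j=v_k\}}\omega_{v_k}^n \le \mathbf 1_{\{u_j=v_k\}}\omega_{u_j}^n$; combined with the fact that off the contact set the relation between the masses is not immediately useful, one instead uses that $P_\omega(\inf_{j\ge k}u_j)\ge$ something — actually the cleanest route is: let $w_k := P_\omega(\min_{k\le j\le k+\ell} u_j)$ and apply Lemma \ref{lem:envmin} item (1) iteratively to get $\omega_{w_k}^n \le \sum_{j=k}^{k+\ell}\omega_{u_j}^n$, whose total mass is $\le \sum_{j\ge k}\int_X\omega_{u_j}^n$. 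Since this tail sum may not be summable, I would instead fix $\varepsilon>0$, pick $j_0$ with $\int_X\omega_{u_{j_0}}^n<\varepsilon$, and just use $w:=P_\omega(\min(u_{j_0}, \text{a fixed bounded }\omega\text{-psh function}))$ — but the real goal is a single bounded $\omega$-psh function of zero Monge-Amp\`ere mass, contradicting non-collapsing.

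So the cleanest version: pick $u_j$ with $\int_X\omega_{u_j}^n \le 2^{-j}$. Set $\varphi_k := P_\omega\big(\min(u_k,u_{k+1},\dots)\big)$; since the $u_j$ are uniformly bounded, $\min(u_k,u_{k+1},\dots)$ is a bounded quasi-l.s.c. function (a decreasing limit of the bounded quasi-continuous functions $\min(u_k,\dots,u_{k+N})$), so Theorem \ref{thm:orthog} applies and $\varphi_k$ is a bounded $\omega$-psh function. By Lemma \ref{lem:envmin}(1) applied inductively, $\omega_{P_\omega(\min(u_k,\dots,u_{k+N}))}^n \le \sum_{j=k}^{k+N}\omega_{u_j}^n$, and letting $N\to\infty$ (using that $P_\omega(\min(u_k,\dots,u_{k+N}))$ decreases to $\varphi_k$ and Bedford-Taylor convergence), $\int_X\omega_{\varphi_k}^n \le \sum_{j\ge k}2^{-j}=2^{-k+1}$. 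Now $\varphi_k$ is increasing in $k$ and uniformly bounded, so $\varphi_k \nearrow \varphi$ a.e. for some bounded $\omega$-psh $\varphi$, and Bedford-Taylor convergence gives $\int_X\omega_\varphi^n = \lim_k\int_X\omega_{\varphi_k}^n = 0$. This contradicts the non-collapsing hypothesis. The main obstacle I anticipate is justifying the passage to the limit in $\int_X\omega_{\varphi_k}^n$: Bedford-Taylor weak convergence gives convergence of the measures against continuous test functions but not automatically of total masses when $\omega$ is non-closed; here it does, because convergence is monotone and the forms $\omega_{\varphi_k}^n$ are positive measures converging weakly, so $\int_X 1\,d\omega_{\varphi_k}^n \to \int_X 1\, d\omega_\varphi^n$ by taking $1$ as a test function on the compact manifold $X$ — this is exactly where compactness of $X$ is used and no integration by parts is needed. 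The same remark handles the upper bound if one prefers the soft argument over CLN.
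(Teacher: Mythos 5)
Your proof of the lower bound $v_M^-(\omega)>0$ is essentially identical to the paper's: pick $u_j$ with $\int_X\omega_{u_j}^n\le 2^{-j}$, form the decreasing sequence $v_{k,N}=P_\omega(\min(u_k,\dots,u_{k+N}))$, apply Lemma \ref{lem:envmin}(1) iteratively and use Bedford--Taylor convergence plus compactness of $X$ (taking $1$ as a test function) to pass to the limit, then let the resulting bounded $\omega$-psh functions $\varphi_k$ increase to a bounded $\omega$-psh $\varphi$ with $\int_X\omega_\varphi^n=0$, contradicting non-collapsing. One small inaccuracy in your presentation: a decreasing limit of quasi-continuous functions is quasi-usc, not quasi-lsc, so $\min_{j\ge k}u_j$ need not be quasi-lsc and Theorem \ref{thm:orthog} does not apply to it directly; but this step is not load-bearing, since the estimate comes from the finite minimums $v_{k,N}$ (each quasi-continuous) and the envelope $\varphi_k$ is $\omega$-psh simply as a decreasing limit of uniformly bounded $\omega$-psh functions.

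For the upper bound the paper simply cites the Chern--Levine--Nirenberg-type estimate from \cite[Theorem 3.5]{DK12}; your sketch via local potentials $\rho$ and the oscillation bound is the same argument unpacked, and it is correct. You should, however, drop the remark that the "soft" $L^1$-compactness argument also handles the upper bound: $L^1$-convergence of uniformly bounded $\omega$-psh functions does not yield weak convergence of $(\omega+dd^c u_j)^n$ (Bedford--Taylor continuity requires monotone or uniform convergence), so $u\mapsto\int_X\omega_u^n$ is not obviously upper semicontinuous on that compact set. Your own caveat ("one must be careful") is the right instinct; the CLN route is not merely cleaner, it is the one that actually closes.
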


   \begin{proof}
The finiteness of $ v^+_{M}(\omega)$ follows easily from integration by parts,
it is e.g. a simple consequence of \cite[Theorem 3.5]{DK12}.

In order to show that $v^-_M(\omega)$ is positive we argue by contradiction.
Assume there exists $u_j \in\PSH(X,\omega)$ such that
$-M \leq u_j \leq 0$ and $\int_X (\omega+dd^c u_j)^n \leq 2^{-j}$.
For $j \in \N$ fixed, the sequence
$$
k \mapsto v_{j,k}:=P_{\omega}( \min(u_j,u_{j+1},\ldots,u_{j+k}))
$$
 decreases towards a $\omega$-psh function $w_j$ such that $-M \leq w_j \leq 0$.
 It follows therefore from Lemma \ref{lem:envmin}   that
 $$
 \int_X (\omega+dd^c w_{j})^n =\lim_{k \rightarrow +\infty} \int_X (\omega+dd^c v_{j,k})^n 
 \leq \sum_{\ell=0}^{+\infty} \int_X (\omega+dd^c u_{j+\ell})^n
 \leq 2^{-j+1}.
 $$
Thus the sequence $j \mapsto w_j$ increases to a bounded $\omega$-psh function $w$
such that $(\omega+dd^c w)^n=0$, which yields a contradiction.
   \end{proof}

     \begin{exa} \label{exa:collapsing}
We provide here an example of a  semi-positive form $\omega$ such that $\int_X \omega^n>0$
but $\omega$ is collapsing, in particular $v_-(\omega)=0$.  
Let $X=Y\times Z$ where $Y,Z$ are two compact complex manifolds of dimension $m\geq 1$, $p\geq 1$ respectively, and ${\rm dim}X=n=p+m$. Fix a smooth function $u$ on $Y$ such that $\omega_Y+dd^c u <0$ is negative in a small open set $U\subset Y$. Let $0\leq \rho\leq 1$ be a cut-off function on $Y$  supported in $U$. The smooth $(1,1)$-form $\omega$ defined by  
$$
\omega = \rho \circ \pi_1 (\pi_1^* \omega_Y + \pi_2^* \omega_Z).
$$
is semipositive on $X$ and satisfies $\omega(y,z)=0$ for $y\notin U$. 

Set now $\phi:= P_{\omega}(u\circ \pi_1)$ and let  $\mathcal{C}:= \{\phi=u\circ \pi_1\}$ denote the contact set. 
The Monge-Amp\`ere measure $(\omega+dd^c \phi)^n$ is concentrated on $\mathcal{C}$. 
Arguing as in \cite[Proposition 3.1]{Ber07} one can show that 
$\mathcal{C} \subset \{ x \in X, \; \omega+dd^c u \circ \pi_1 (x) \geq 0 \}$.
Since $\omega+dd^c (u\circ \pi_1)<0$ is negative in $U\times Z$, it follows that $\mathcal{C}\subset X\setminus (U\times Z)$. 
Now $\omega=0$ outside $U\times Z$, hence 
\[
(\omega+dd^c \phi)^n \leq {\bf 1}_{\mathcal{C}} (dd^c  u\circ \pi_1)^n =0,
\] 
because $u\circ \pi_1$ depends only on $y$. It thus follows that $(\omega+dd^c \phi)^n =0$ on $X$. 
 \end{exa}

    \subsection{Bimeromophic invariance}

\begin{lem} \label{lem:onedirection}
Let $f:X \rightarrow Y$ be a proper holomorphic   map
    between  compact complex manifolds of dimension $n$,
    equipped with Hermitian forms $\omega_X,\omega_Y$.
    Then
    \begin{itemize}
    \item  $v_+(\omega_X)<+\infty \Longrightarrow v_+(\omega_Y)<+\infty$;
    \item  $v_-(\omega_Y)>0 \Longrightarrow v_-(\omega_X)>0$ if $f$ has connected fibers.
    \end{itemize}
\end{lem}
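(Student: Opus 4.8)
The strategy is to push forward $\omega_X$-psh functions along $f$, respectively pull back $\omega_Y$-psh functions along $f$, comparing the relevant Monge-Amp\`ere masses. Since $f$ is proper holomorphic between equidimensional manifolds, it is generically finite of some degree $d\geq 1$, and away from a proper analytic subset $E\subset Y$ (the image of the ramification locus together with the non-\'etale points), $f:X\setminus f^{-1}(E)\to Y\setminus E$ is a $d$-sheeted covering. The proof of each bullet will rest on the change-of-variables formula $\int_X f^*\eta = d\int_Y \eta$ for top-degree forms $\eta$ on $Y$, and on the fact that $f^*$ of a closed positive current is again closed positive, while $f_*$ behaves well on currents of bidimension $(n,n)$.

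For the first bullet, fix $u\in\PSH(Y,\omega_Y)\cap L^\infty(Y)$; I want to produce an $\omega_X$-psh competitor. The natural candidate is $u\circ f$, which is quasi-psh on $X$, bounded, and satisfies $\omega_Y+dd^c u\geq 0 \Rightarrow f^*\omega_Y + dd^c(u\circ f)\geq 0$. Since $\omega_Y$ is Hermitian and $f$ proper, $f^*\omega_Y \leq C\omega_X$ for some $C>0$ on $X$, so $u\circ f/C$ (after scaling, or directly using $C\omega_X \geq f^*\omega_Y$) gives a bounded $C\omega_X$-psh function. Then
\[
\int_X (C\omega_X + dd^c(u\circ f))^n \geq \int_X (f^*\omega_Y + dd^c(u\circ f))^n = \int_X f^*\big((\omega_Y+dd^c u)^n\big) = d\int_Y(\omega_Y+dd^c u)^n,
\]
where the middle equality uses that $f^*$ commutes with $dd^c$ and with wedge products of currents that are smooth off a pluripolar set (justified by writing $u$ as a decreasing limit of smooth functions when $\omega_Y$ is Hermitian, applying Bedford–Taylor convergence, and noting the image of the pluripolar exceptional set under $f$ is pluripolar). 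Hence $d\int_Y(\omega_Y+dd^c u)^n \leq C^n v_+(\omega_X)$, and taking the supremum over $u$ gives $v_+(\omega_Y)\leq d^{-1}C^n v_+(\omega_X)<+\infty$. (One should be mildly careful that the first inequality above really holds: expanding $(C\omega_X+dd^c\phi)^n$ versus $(f^*\omega_Y+dd^c\phi)^n$ with $C\omega_X\geq f^*\omega_Y\geq 0$ and $\omega_X+dd^c\phi \geq 0$, all mixed terms are nonnegative, so the inequality is legitimate.)

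For the second bullet, assume $f$ has connected fibers and $v_-(\omega_Y)>0$. Given $v\in\PSH(X,\omega_X)\cap L^\infty(X)$, I want to descend it to $Y$. Because $f$ has connected fibers and is proper, $f_*\mathcal O_X = \mathcal O_Y$, so a function on $X$ that is bounded and is (after local correction) psh along the generic structure descends; concretely, the push-forward $w:=f_*v$ defined fiberwise by $w(y):=\sup_{f(x)=y} v(x)$ — or more robustly the one obtained from $P_{\omega_Y}$ — should be shown to be a bounded $\omega_Y$-psh function on $Y$, using that $\omega_X \leq C' f^*\omega_Y$ fails in general, so instead one works with $P_{\omega_Y}(w)$ and Theorem \ref{thm:orthog}. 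The clean route: let $w:=P_{\omega_Y}\big((f_*v)\big)$ where $f_*v$ is the (quasi-l.s.c., bounded) fiberwise sup; then $w$ is bounded $\omega_Y$-psh, $w\circ f \leq v$ generically, and on the contact set the Monge-Amp\`ere mass of $\omega_Y+dd^c w$ is controlled from above by pulling back and comparing to $(\omega_X+dd^c v)^n$ via Lemma \ref{lem:Demkey}, yielding $d\int_Y(\omega_Y+dd^c w)^n \leq \int_X(\omega_X+dd^c v)^n$ up to the covering degree, hence $\int_X(\omega_X+dd^c v)^n \geq d\, v_-(\omega_Y)>0$, so $v_-(\omega_X)>0$.

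\textbf{Main obstacle.} The delicate point is the second bullet: making rigorous that the fiberwise push-forward of a bounded $\omega_X$-psh function descends to something in $\PSH(Y,\omega_Y)$ and that the Monge-Amp\`ere comparison survives the push-forward across the ramification locus $E$. The connectedness-of-fibers hypothesis is exactly what rescues this (it forces $f_*\mathcal O_X=\mathcal O_Y$ and prevents the descended function from being identically $-\infty$ or from losing plurisubharmonicity along the branching); I expect the proof to route the comparison through $P_{\omega_Y}$ and Theorem \ref{thm:orthog} rather than a naive fiberwise formula, and to handle $E$ by the fact that analytic sets are pluripolar so Monge-Amp\`ere measures of bounded potentials do not charge them. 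The first bullet, by contrast, is essentially formal once the pull-back/change-of-variables bookkeeping is set up.
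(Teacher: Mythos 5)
Your first bullet follows the paper's argument exactly in spirit: normalize so that $f^*\omega_Y\leq\omega_X$ (or $\leq C\omega_X$), pull back $\varphi\in\PSH(Y,\omega_Y)\cap L^\infty(Y)$ to $\varphi\circ f$, and compare via change of variables with $v_+(\omega_X)$. You carry the covering degree $d$ explicitly; the paper writes the change-of-variables identity without it, which is harmless since $d\geq 1$ can only improve the upper bound.

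The second bullet as you sketch it has a real gap, and it is not just a matter of filling in technical details. You propose to descend $v\in\PSH(X,\omega_X)\cap L^\infty(X)$ by the fiberwise supremum $w(y):=\sup_{f(x)=y}v(x)$, then regularize by $P_{\omega_Y}$. But the direction of comparison is wrong: one has $P_{\omega_Y}(w)\leq w$ and $w\circ f\geq v$ everywhere, so nothing forces $P_{\omega_Y}(w)\circ f\leq v$, and without that inequality you cannot run a contact-set comparison (Lemma~\ref{lem:Demkey}) to bound the Monge--Amp\`ere mass of $\omega_Y+dd^c P_{\omega_Y}(w)$ by that of $\omega_X+dd^c v$. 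Replacing the sup by the fiberwise infimum fixes the direction but destroys quasi-lower-semicontinuity, so Theorem~\ref{thm:orthog} no longer applies. The paper avoids the descent-of-functions problem entirely by taking the envelope \emph{upstairs with respect to the degenerate pullback form}: set $u:=P_{f^*\omega_Y}(\psi)$ for $\psi\in\PSH(X,\omega_X)\cap L^\infty(X)$. Then $u\leq\psi$ automatically; and since $u$ is $f^*\omega_Y$-psh and $f^*\omega_Y$ restricts to zero on the fibers of $f$, the restriction of $u$ to each compact connected fiber is psh, hence constant, so $u=\varphi\circ f$ with $\varphi\in\PSH(Y,\omega_Y)\cap L^\infty(Y)$. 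Theorem~\ref{thm:orthog} concentrates $(f^*\omega_Y+dd^c u)^n$ on $\{u=\psi\}$, Lemma~\ref{lem:Demkey} together with $f^*\omega_Y\leq\omega_X$ gives $(f^*\omega_Y+dd^c u)^n\leq(\omega_X+dd^c\psi)^n$, and change of variables yields
$$
v_-(\omega_Y)\leq\int_Y(\omega_Y+dd^c\varphi)^n=\int_X(f^*\omega_Y+dd^c u)^n\leq\int_X(\omega_X+dd^c\psi)^n.
$$
The connected-fibers hypothesis is used only to make $u$ descend; no fiber integration, no pushforward of quasi-psh functions, and no handling of the ramification locus is needed.
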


It follows from Zariski's main theorem that $f$ has connected fibers if it is bimeromorphic.

\begin{proof}
Up to rescaling, we can assume   that $f^* \omega_Y \leq \omega_X$.
Fix $\f \in\PSH(Y,\omega_Y) \cap L^{\infty}(Y)$. Then $\f \circ f \in\PSH(X,\omega_X) \cap L^{\infty}(X)$ with
$$
\int_Y (\omega_Y+dd^c \f)^n =\int_X (f^* \omega_Y+dd^c \f \circ f)^n 
\leq \int_X (\omega_X+dd^c \f \circ f)^n \leq v_+(\omega_X),
$$
thus $v_+(\omega_Y) \leq v_+(\omega_X)$ and the first assertion is proved.

\smallskip

Consider now $\p \in\PSH(X,\omega_X) \cap L^{\infty}(X)$  and set $u=P_{f^*\omega_Y}(\psi)$.
The function $u$ is $f^*\omega_Y$,  hence plurisubharmonic on the fibers of $f$.
If the latter are  connected we obtain that
 $u$ is constant on them, i.e. $u=\f \circ f$ for some function $\f \in\PSH(Y,\omega_Y) \cap L^{\infty}(Y)$.
Since $(f^* \omega_Y +dd^c u)^n\leq {\bf 1}_{\{ u=\p\}} (f^* \omega_Y +dd^c \p)^n$, we infer
$$
v_-(\omega_Y) \leq \int_Y (\omega_Y+dd^c \f)^n =\int_X (f^* \omega_Y +dd^c u)^n \leq \int_X (\omega_X +dd^c \p)^n
$$
so that $v_-(\omega_Y) \leq v_-(\omega_X)$, proving the second assertion.
\end{proof}

    We conversely show that   the properties $v_+(\omega_X)<+\infty$ and $v_-(\omega_X)>0$
    are  invariant under blow ups and blow downs with smooth centers:

    \begin{thm} \label{thm:bimerom}
    Let $X$ and $Y$  be compact complex manifolds which are bimeromorphically equivalent.
    Then
    \begin{itemize}
    \item  $v_+(\omega_X)<+\infty$  if and only if $v_+(\omega_Y)<+\infty$;
    \item  $v_-(\omega_X)>0$  if and only if $v_-(\omega_Y)>0$.
    \end{itemize}
    \end{thm}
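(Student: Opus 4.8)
By the weak factorization theorem (Hironaka, Abramovich--Karu--Matsuki--W\l odarczyk) any bimeromorphic map between compact complex manifolds $X$ and $Y$ factors as a finite composition of blow-ups and blow-downs with smooth centers, through a sequence of compact complex manifolds. Since each of the two properties $v_+(\cdot)<+\infty$ and $v_-(\cdot)>0$ is symmetric in $X$ and $Y$, it suffices to treat a single elementary modification: let $\pi\colon \widetilde{X}\to X$ be the blow-up of $X$ along a smooth submanifold $Z$, and prove that $v_+(\omega_X)<+\infty \iff v_+(\omega_{\widetilde X})<+\infty$ and $v_-(\omega_X)>0 \iff v_-(\omega_{\widetilde X})>0$. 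One direction of each equivalence is already contained in Lemma \ref{lem:onedirection}: since $\pi$ is proper with connected fibers, we get $v_+(\omega_{\widetilde X})<+\infty \Rightarrow v_+(\omega_X)<+\infty$ and $v_-(\omega_X)>0 \Rightarrow v_-(\omega_{\widetilde X})>0$. So the real content is the two reverse implications.

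\textbf{The key construction.} First I would fix a Hermitian form $\omega_{\widetilde X}$ on $\widetilde X$ and a Hermitian form $\omega_X$ on $X$. The standard fact about blow-ups of smooth centers is that there exists a smooth function $\rho$ on $\widetilde X$, a constant $C>1$, and $\varepsilon_0>0$ such that
\begin{equation*}
\frac{1}{C}\,\omega_{\widetilde X} \;\leq\; \pi^*\omega_X + \varepsilon_0\, dd^c\rho \;\leq\; C\,\omega_{\widetilde X},
\end{equation*}
i.e. $\pi^*\omega_X + \varepsilon_0 dd^c\rho$ is a Hermitian form on $\widetilde X$ comparable to $\omega_{\widetilde X}$ (this uses that the center is smooth, so one can patch a local Kähler potential on the exceptional divisor with a cutoff). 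Equivalently, $\pi^*\omega_X$ dominates a Hermitian form away from the exceptional divisor and only degenerates there in a controlled way. By Proposition \ref{pro:bddsMA} (the monotonicity \eqref{eq:v-+} and the metric-independence of both conditions), it is enough to compare $v_{\pm}(\pi^*\omega_X)$ with $v_{\pm}(\omega_X)$, and then absorb $\varepsilon_0 dd^c\rho$ using monotonicity once more: for $\varepsilon_0$ small, $\pi^*\omega_X \le \pi^*\omega_X + \varepsilon_0 dd^c\rho + (\text{small Hermitian correction})$ is squeezed between two constant multiples of $\omega_{\widetilde X}$, so $v_+(\omega_{\widetilde X})<+\infty \iff v_+(\pi^*\omega_X + \varepsilon_0 dd^c\rho)<+\infty$. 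The subtle point is that $\pi^*\omega_X$ itself is only semi-positive, not Hermitian, so one must check $\pi^*\omega_X$ is non-collapsing (or handle the degeneration directly) before invoking the envelope machinery — but $\pi^*\omega_X = \pi^*\omega_X$ of a Hermitian form is exactly the second bullet of the list after Definition of condition (B), so $\pi^*\omega_X$ satisfies condition (B), hence is non-collapsing by Corollary \ref{cor:Bvsnoncollapsing}.

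\textbf{The two reverse implications.} For $v_+$: assume $v_+(\omega_X)<+\infty$ and take $\varphi\in\PSH(\widetilde X,\pi^*\omega_X)\cap L^\infty$. Since the fibers of $\pi$ are connected, $P_{\pi^*\omega_X}(\varphi)$ — no, wait, here I want to push forward: the function $\varphi$ restricted to a generic fiber is $\pi^*\omega_X$-psh on a fiber where $\pi^*\omega_X\equiv 0$, hence psh on the fiber, hence — since fibers are compact connected — constant on it. Actually for a general bounded $\pi^*\omega_X$-psh $\varphi$ this is false; so instead I would argue: the exceptional set $E$ has Lebesgue measure zero, $\pi\colon \widetilde X\setminus E \to X\setminus Z$ is a biholomorphism, and $(\pi^*\omega_X + dd^c\varphi)^n$ puts no mass on $E$ (since $E$ is pluripolar and $\pi^*\omega_X + dd^c\varphi$ has bounded potential, by the standard fact that Monge-Ampère measures of bounded potentials charge no pluripolar set). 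Hence $\int_{\widetilde X}(\pi^*\omega_X + dd^c\varphi)^n = \int_{\widetilde X\setminus E}(\pi^*\omega_X + dd^c\varphi)^n = \int_{X\setminus Z}(\omega_X + dd^c\psi)^n$ where $\psi = \varphi\circ(\pi|_{\widetilde X\setminus E})^{-1}$ is bounded and $\omega_X$-psh on $X\setminus Z$; since $Z$ has zero capacity one extends $\psi$ across $Z$ to $\psi\in\PSH(X,\omega_X)\cap L^\infty$, giving $\int_{\widetilde X}(\pi^*\omega_X+dd^c\varphi)^n \le v_+(\omega_X)<+\infty$; so $v_+(\pi^*\omega_X)\le v_+(\omega_X)$. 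For $v_-$: assume $v_-(\omega_X)>0$ and take $\varphi\in\PSH(\widetilde X,\pi^*\omega_X)\cap L^\infty$; by the same removal-of-pluripolar-set argument applied to both the measure and the potential, $\varphi$ descends to $\psi\in\PSH(X,\omega_X)\cap L^\infty$ with $\int_{\widetilde X}(\pi^*\omega_X + dd^c\varphi)^n = \int_X(\omega_X + dd^c\psi)^n \ge v_-(\omega_X) > 0$, so $v_-(\pi^*\omega_X)\ge v_-(\omega_X)$. Combining with Lemma \ref{lem:onedirection} and the $\varepsilon_0 dd^c\rho$ comparison finishes both equivalences, and the weak factorization then gives the general bimeromorphic statement.

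\textbf{Main obstacle.} The delicate point is the descent step: verifying that a bounded $\pi^*\omega_X$-psh function on $\widetilde X$ restricts, off the exceptional divisor, to a bounded $\omega_X$-psh function on $X\setminus Z$ that extends across $Z$, \emph{and} that the Monge-Ampère masses match because neither the measure nor the discrepancy charges the (pluripolar, measure-zero) exceptional set. This relies on the removability of pluripolar sets for bounded-potential Monge-Ampère operators in the Hermitian setting and on Lemma \ref{lem: Josefson semipositive} / the extension of $\omega$-psh functions across small sets; getting the bookkeeping exactly right — especially that the extension is genuinely $\omega_X$-psh globally and bounded, not merely on $X\setminus Z$ — is where the care is needed. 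The reduction to an elementary blow-up and the final absorption of $\varepsilon_0 dd^c\rho$ via Proposition \ref{pro:bddsMA} are routine by comparison.
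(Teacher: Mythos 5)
Your reduction to a single blow-up via Hironaka (the paper uses the same reduction) and the use of Lemma \ref{lem:onedirection} for the easy directions are both fine, but there is a genuine gap in the claimed key construction, and it unfortunately sits exactly at the step you dismiss as routine.

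You assert the existence of a \emph{smooth} function $\rho$ on $\widetilde X$ and $\varepsilon_0>0$ such that $\pi^*\omega_X+\varepsilon_0\,dd^c\rho$ is a Hermitian form comparable to $\omega_{\widetilde X}$. This is false. Over a point of the smooth center $Z$ (of codimension $c\geq 2$), the fiber $F\cong\mathbb{P}^{c-1}$ of $\pi$ satisfies $(\pi^*\omega_X)|_F\equiv 0$, so the putative inequality $\pi^*\omega_X+\varepsilon_0 dd^c\rho\geq \frac{1}{C}\omega_{\widetilde X}$ would force $dd^c(\rho|_F)>0$, i.e.\ $\rho|_F$ would be a smooth strictly plurisubharmonic function on the compact manifold $\mathbb{P}^{c-1}$ — impossible by the maximum principle. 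What is true, and what the paper uses (citing \cite{BL70}, \cite[Prop.~3.2]{FT09} and Demailly regularization), is that there is a quasi-psh function $\psi$ \emph{with analytic singularities along the exceptional divisor} such that $\pi^*\omega_X+dd^c\psi\geq\delta\omega_{\widetilde X}$. The pole of $\psi$ along $E$ cannot be removed. As a result, your statement that "the final absorption of $\varepsilon_0\,dd^c\rho$ via Proposition \ref{pro:bddsMA} is routine" is not just imprecise — without a smooth $\rho$ there is simply no pointwise comparison $\omega_{\widetilde X}\leq C\,\pi^*\omega_X$, so the bounds you prove on $v_+(\pi^*\omega_X)$ and $v_-(\pi^*\omega_X)$ do not transfer to $v_+(\omega_{\widetilde X})$ and $v_-(\omega_{\widetilde X})$, and the hard implications $v_+(\omega_X)<\infty\Rightarrow v_+(\omega_{\widetilde X})<\infty$ and $v_-(\omega_{\widetilde X})>0\Rightarrow v_-(\omega_X)>0$ remain unproved. (For $v_-$, note in addition that the implication you actually prove starts from $v_-(\omega_X)>0$, which is the premise of the \emph{easy} direction already covered by Lemma \ref{lem:onedirection}.) Managing the singularity of $\psi$ is the whole content of the paper's argument: for $v_+$ one replaces $\psi+\f$ by the truncations $\max(\psi+\f,-j)$, which descend to $Y$ after trivial extension across the analytic image of $E$; for $v_-$ one uses the envelope $u=P_{\omega_X}(v\circ\pi-\psi)$ together with the orthogonality relation (Theorem \ref{thm:orthog}) and Lemma \ref{lem:Demkey} on the contact set.

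A smaller remark: the fiber-constancy observation you abandoned is actually correct — a bounded $\pi^*\omega_X$-psh function restricts to a psh function on each compact connected fiber $\mathbb{P}^{c-1}$, hence is constant there — and it is essentially what the paper uses. Your alternative pluripolar-removability argument for the descent of $\varphi$ and its Monge--Amp\`ere mass is also sound as far as it goes. But neither version resolves the gap above, because what is missing is not the descent step but a device (the singular $\psi$) to pass between $\omega_{\widetilde X}$-psh functions on $\widetilde X$ and $\omega_X$-psh functions on $X$ without losing control of masses.
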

    
    \begin{proof}
       A celebrated result of Hironaka (see e.g. \cite[Theorem 5.1]{Mat}) ensures that  any bimeromorphic map between compact complex manifolds
   is a finite composition of blow ups and blow downs with smooth centers. We can thus assume that 
   $f: X\rightarrow Y$  is the blow up of $Y$ along a smooth center.

   We fix $\p$ a quasi-plurisubharmonic function  such that
   $\pi^* \omega_Y+dd^c \p \geq \delta \omega_X$. The existence of $\psi$ follows from a classical argument in complex geometry (see  \cite{BL70}, \cite[Proposition 3.2]{FT09}). By Demailly's approximation theorem we can further assume that $\psi$ has analytic singularities. Up to scaling  we can assume without loss of generality that $\delta=1$,
   and we set $\Omega=\{x \in X \; : \;  \p(x)>-\infty\}$.
   
   We already know by Lemma \ref{lem:onedirection} that 
     $v_+(\omega_X)<+\infty \Longrightarrow v_+(\omega_Y)<+\infty$. 
     Assume conversely that  $v_+(\omega_Y)<+\infty$. 
   For any $\f \in\PSH(X,\omega_X) \cap L^{\infty}(X)$,
  \begin{eqnarray*}
   \int_X (\omega_X+dd^c \f)^n &\leq&  \int_{\Omega} (\pi^* \omega_Y+dd^c (\p+\f))^n. 
  \end{eqnarray*}
  The function $u=\p+\f$ is $\pi^*\omega_Y$-psh and bounded from above in $\Omega$.
  It is constant on the fibers of $\pi$, hence 
  $u=v \circ \pi$ with $v \in\PSH(\pi(\Omega),\omega_Y) \cap L^{\infty}(\pi(\Omega))$.
  As $v$ is bounded from above, it extends trivially through the analytic set $\pi(\partial \Omega)$ as a 
  $\omega_Y$-psh function which is locally bounded in $\pi(\Omega)$. Thus
   \begin{flalign*}
    \int_{\Omega} (\pi^* \omega_Y+dd^c (\p+\f))^n &= \int_{\pi(\Omega)} (\omega_Y+dd^c v)^n\\
    &\leq \liminf_{j\to +\infty}\int_{\pi(\Omega)} (\omega_Y+dd^c \max(v,-j))^n \\
    & \leq v_+(\omega_Y)
   \end{flalign*}
   yields $v_+(\omega_X) \leq v_+(\omega_Y)<+\infty$.

   \smallskip
   
   We now assume that $v_-(\omega_X)>0$.
   Pick $v \in\PSH(Y,\omega_Y) \cap L^{\infty}(Y)$ and set $u=P_{\omega_X}(v \circ \pi -\p)$.
   Observe that $u \in\PSH(X,\omega_X) \cap L^{\infty}(X)$
   and recall that $(\omega_X+dd^c u)^n$ is concentrated on the contact set 
   ${\mathcal C}=\{u+\p=v \circ \pi\}$ (see Theorem \ref{thm:orthog}).
   Observe that $\pi^*\omega_Y +dd^c (u+\psi) \geq \omega_X+dd^c u \geq 0$. Since $v \circ \pi$ is also $\pi^*\omega_Y$-
   psh, locally bounded in $\Omega$,
   with $u+\p \leq v \circ \pi$, it follows from 
   Lemma \ref{lem:Demkey}  that
   $$
   1_{\mathcal C} (\pi^* \omega_Y+dd^c (u+\p))^n \leq  1_{\mathcal C} (\pi^* \omega_Y+dd^c v \circ \pi)^n
   \leq (\pi^* \omega_Y+dd^c v \circ \pi)^n.
   $$
   Now $\pi^* \omega_Y+dd^c (u+\p) \geq \omega_X+dd^c u$
   and $(\omega_X+dd^c u)^n$ is concentrated on ${\mathcal C}$ so
   $$
    1_{\mathcal C} (\pi^* \omega_Y+dd^c (u+\p))^n \geq (\omega_X+dd^c u)^n.
   $$
   We infer
   \begin{eqnarray*}
   v_-(\omega_X) \leq \int_X (\omega_X+dd^c u)^n
   &\leq&  \int_{\mathcal C} (\pi^* \omega_Y+dd^c (u+\p))^n \\
   &\leq & \int_X (\pi^* \omega_Y+dd^c v \circ \pi)^n
   =\int_Y (\omega_Y+dd^c v)^n,
   \end{eqnarray*}
   showing that $v_-(\omega_Y) \geq v_-(\omega_X)>0$.
        The reverse implication  $v_-(\omega_Y)>0 \Longrightarrow v_-(\omega_X)>0$  follows from Lemma \ref{lem:onedirection}.
    \end{proof}

     Recall that  a compact complex manifold $X$ belongs to the Fujiki class ${\mathcal C}$
   if there exists a holomorphic bimeromophic map $\pi:Y \rightarrow X$,
   where $Y$ is compact K\"ahler.
   Since $v_+(\omega_X)=v_-(\omega_X)=\int_X \omega_X^n \in \R_+^*$
   when $\omega_X$ is a K\"ahler form, we obtain the following:
   
        \begin{coro} \label{cor:Fujiki}
      If $X$ belongs to the Fujiki class $\mathcal{C}$ then 
      $$
      0<v_-(\omega_X) \leq v_+(\omega_X)<+\infty.
      $$ 
      \end{coro}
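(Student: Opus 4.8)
The plan is to deduce Corollary \ref{cor:Fujiki} directly from Theorem \ref{thm:bimerom} together with the elementary fact that on a compact K\"ahler manifold the volumes $v_\pm$ coincide with the cohomological self-intersection number. First I would observe that if $X$ is in the Fujiki class $\mathcal C$, then by definition there is a compact K\"ahler manifold $Y$ and a holomorphic bimeromorphic map $\pi: Y \to X$; in particular $X$ and $Y$ are bimeromorphically equivalent in the sense required by Theorem \ref{thm:bimerom}.

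Next I would handle the K\"ahler case. Equip $Y$ with a K\"ahler form $\omega_Y$. Since $\omega_Y$ is closed, for every $u \in \PSH(Y,\omega_Y)\cap L^\infty(Y)$ Stokes' theorem (applied after Demailly's regularization, or directly via Bedford--Taylor) gives $\int_Y (\omega_Y + dd^c u)^n = \int_Y \omega_Y^n = V_{\omega_Y}$, because all the mixed terms $\int_Y \omega_Y^{n-k}\wedge (dd^c u)^k$ with $k\ge 1$ vanish. Hence $v_-(\omega_Y) = v_+(\omega_Y) = V_{\omega_Y}$, and this number is strictly positive since $\omega_Y$ is a K\"ahler form on a compact manifold, so $0 < v_-(\omega_Y) \le v_+(\omega_Y) < +\infty$.

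Finally I would invoke Theorem \ref{thm:bimerom}: since $X$ and $Y$ are bimeromorphically equivalent, $v_+(\omega_X) < +\infty$ if and only if $v_+(\omega_Y) < +\infty$, and $v_-(\omega_X) > 0$ if and only if $v_-(\omega_Y) > 0$. Both right-hand conditions hold by the previous paragraph, so $v_+(\omega_X) < +\infty$ and $v_-(\omega_X) > 0$, which together with the trivial inequality $v_-(\omega_X) \le v_+(\omega_X)$ gives the claimed chain $0 < v_-(\omega_X) \le v_+(\omega_X) < +\infty$. I do not anticipate a genuine obstacle here: Theorem \ref{thm:bimerom} does all the heavy lifting, and the only point requiring a word of care is that the definitions of $v_\pm$ are independent of the chosen Hermitian metric (Proposition \ref{pro:bddsMA}), so the conclusion does not depend on which $\omega_X$, $\omega_Y$ we picked — in particular we are free to choose $\omega_Y$ K\"ahler on $Y$.
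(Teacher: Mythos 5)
Your proposal is correct and matches the paper's argument: the authors likewise observe that $v_-(\omega_Y)=v_+(\omega_Y)=\int_Y\omega_Y^n>0$ for a K\"ahler form $\omega_Y$ on a compact K\"ahler model $Y$ of $X$, and then invoke the bimeromorphic invariance of Theorem \ref{thm:bimerom} to transfer these bounds to $X$.
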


    \section{Weak transcendental   Morse inequalities}

      \subsection{Nef and big forms}

    Recall that the Bott-Chern cohomology group $H_{BC}^{1,1}(X,\R)$
    is the quotient of   closed real smooth $(1,1)$-forms,
   by the image of  ${\mathcal C}^{\infty}(X,\R)$
   under the $dd^c$-operator.
    This is a finite dimensional vector space as $X$ is compact.
    
    Nefness and bigness  are fundamental positivity properties of holomorphic line bundles
    in complex algebraic geometry (see  \cite{Laz}).
    Their transcendental counterparts have been defined and studied by Demailly (see \cite{Dem12}):
    
    \begin{defi}
    \text{ }
    
    \begin{itemize}
    \item A cohomology class $\alpha \in H_{BC}^{1,1}(X,\R)$ is nef if 
    for any $\e>0$, one can find a smooth closed real $(1,1)$-form
    $\theta_{\e} \in \alpha$ such that $\theta_{\e} \geq -\e \omega_X$.
    \item   A {\it Hermitian current} on $X$ is a positive current $T$ of bidegree $(1,1)$
   which dominates a Hermitian form, i.e. there exists $\delta>0$ such that
   $T \geq \delta \omega_X$.
   \item  A cohomology class $\alpha \in H_{BC}^{1,1}(X,\R)$ is big if 
  it can be represented by a closed Hermitian current
  (a K\"ahler current).
    \end{itemize}
     \end{defi}

    It follows from an approximation  result of Demailly \cite{Dem92}
    that one can weakly approximate a Hermitian current by
    Hermitian currents with analytic singularities. In particular
    a big cohomology class can be represented by
    a K\"ahler current with analytic singularities.

      \smallskip
    
         By analogy with the above setting, we propose the following definitions:
    
     \begin{definition}
     Let $\omega$ be a smooth real $(1,1)$ form on $X$.
     \begin{itemize}
      \item We  say that   $\omega$ is nef if for any $\e>0$ there exists a smooth  quasi-plurisubharmonic function $\f_{\e}$ such that 
      $\omega+dd^c \f_{\e} \geq -\e \omega_X$.
     \item   We  say that   $\omega$ is big if there exists a $\omega$-psh function $\rho$ with analytic singularities such that $\omega+dd^c \rho$ dominates a Hermitian form.
     \end{itemize}   
    \end{definition}
    
  Note that $\PSH(X,\omega)$ is non empty in both cases: indeed $\rho \in \PSH(X,\omega)$ in the latter case,
while  one can extract $\f_{\e_j} \rightarrow \f \in \PSH(X,\omega)$  in the former, normalizing the
potentials $\f_{\e_j}$ by imposing $\sup_X \f_{\e_j}=0$.

 When $X$ is a compact K\"ahler manifold and 
      $\alpha \in H^{1,1}_{BC}(X,\R)$ is   nef with $\alpha^n>0$,
      a celebrated result of Demailly-P\u{a}un \cite[Theorem 0.5]{DP04} 
      ensures the existence of a K\"ahler current representing $\alpha$.
      This result is the key step in establishing a transcendental
      Nakai-Moishezon criterion (see \cite[Main theorem]{DP04}).
    
   We study in the sequel  a possible extension of this result to the Hermitian setting.
   We thus need to extend the definition of $v_-$ to nef forms:
   
   \begin{defi}
   If $\omega$ is a nef $(1,1)$-form, we set
   $$
   \hat{v}_-(\omega):=\inf_{\e>0} v_-(\omega+\e \omega_X).
   $$
   \end{defi}
   
   Although the form $\omega+\e \omega_X$ needs not be semi-positive, one can find
   by definition  
    a semi-positive form $\omega+\e \omega_X+dd^c \f_{\e}$
   cohomologous to $\omega+\e \omega_X$,
   and it is understood here that $v_-(\omega+\e \omega_X):=v_-(\omega+\e \omega_X+dd^c \f_{\e})$.
  By \eqref{eq:v-+}, the definition of $\hat{v}_-(\omega)$ is independent of the choice of the Hermitian form $\omega_X$.

   It is natural to expect that 
   this definition is consistent with the previous one when $\omega$ is semi-positive,
   and that $\hat{v}_-(\omega)=\alpha^n$ when $\omega$ is a closed form representing
   a nef class $\alpha \in H^{1,1}_{BC}(X,\R)$:
   
   \begin{lem}\label{lem: lem v hat and v}
If $\omega$ is semi-positive then $\hat{v}_-(\omega)=v_-(\omega)$.  If  $v_{+}(\omega_X) <+\infty$ and $\omega$ is a closed form representing a nef class in $H^{1,1}_{BC}(X,\R)$, then  $\hat{v}_-(\omega)=\alpha^n$. 
   \end{lem}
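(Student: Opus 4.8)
The plan is to treat the two assertions in turn; the first is elementary, while the second relies on the tools developed above.

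\emph{First assertion.} Suppose $\omega\ge 0$. Every $\omega$-psh function is $(\omega+\e\omega_X)$-psh, so monotonicity \eqref{eq:v-+} gives $v_-(\omega)\le v_-(\omega+\e\omega_X)$ for all $\e>0$, hence $v_-(\omega)\le\hat v_-(\omega)$. For the converse, fix a bounded $u\in\PSH(X,\omega)$; by multilinearity of the wedge product --- legitimate because $u$ is bounded, so each $(\omega+dd^cu)^k$ is a well-defined positive current ---
\[
\int_X(\omega+\e\omega_X+dd^cu)^n=\sum_{k=0}^n\binom nk\e^{\,n-k}\int_X(\omega+dd^cu)^k\wedge\omega_X^{\,n-k}.
\]
For this \emph{fixed} $u$ every coefficient on the right is finite, so the left side tends to $\int_X(\omega+dd^cu)^n$ as $\e\to0$; since $v_-(\omega+\e\omega_X)\le\int_X(\omega+\e\omega_X+dd^cu)^n$, passing to the limit and then taking the infimum over $u$ yields $\hat v_-(\omega)\le v_-(\omega)$.

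\emph{Second assertion: set-up and upper bound.} First, $v_-(\omega+\e\omega_X)$ is independent of the chosen semi-positive representative: if $\eta=\eta'+dd^c\chi\ge0$ with $\eta'\ge0$, then $u\mapsto u+\chi$ is a mass-preserving bijection between bounded $\eta$-psh and bounded $\eta'$-psh functions, so $v_-(\eta)=v_-(\eta')$. Applying this to $\eta'=\omega+\e'\omega_X+dd^c\f_{\e'}$ and $\eta=\eta'+(\e-\e')\omega_X$ shows that $\e\mapsto v_-(\omega+\e\omega_X)$ is non-decreasing, so $\hat v_-(\omega)=\lim_{\e\to0^+}v_-(\omega+\e\omega_X)$. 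For the upper bound, use nefness of $\alpha$ to pick, for $\e\in(0,1]$, a smooth closed $\theta_\e=\omega+dd^cg_\e\in\alpha$ with $\theta_\e\ge-\e\omega_X$, and use $\theta_\e+\e\omega_X\ge0$ as representative of $\omega+\e\omega_X$. Testing at $u=0$,
\[
v_-(\omega+\e\omega_X)\le\int_X(\theta_\e+\e\omega_X)^n=\alpha^n+\sum_{k=1}^n\binom nk\e^{\,k}\int_X\theta_\e^{\,n-k}\wedge\omega_X^{\,k}.
\]
Fix a Hermitian form $\hat\omega=C\omega_X$ with $\hat\omega\ge\omega+\omega_X$; then $\theta_\e\le\hat\omega+dd^cg_\e$, $\omega_X\le\hat\omega$ and $g_\e\in\PSH(X,\hat\omega)$, so monotonicity of mixed Monge-Amp\`ere masses together with Proposition \ref{pro:easybounds}(5) give $\int_X\theta_\e^{\,n-k}\wedge\omega_X^{\,k}\le 2^nv_+(\hat\omega)<+\infty$, where $v_+(\omega_X)<+\infty$ is used. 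Hence $v_-(\omega+\e\omega_X)\le\alpha^n+O(\e)$, and $\hat v_-(\omega)\le\alpha^n$.

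\emph{Lower bound: the main obstacle.} By the monotonicity above it suffices to prove $\liminf_{\e\to0}v_-(\omega+\e\omega_X)\ge\alpha^n$. Assume not: there are $\e_j\downarrow0$ and bounded $(\theta_{\e_j}+\e_j\omega_X)$-psh functions $u_j$, normalized so that $\psi_j:=u_j+g_{\e_j}$ is normalized $\hat\omega$-psh, with $\int_XT_j^n\le\alpha^n-\delta$ for some $\delta>0$, where $T_j:=\omega+\e_j\omega_X+dd^c\psi_j=\theta_{\e_j}+\e_j\omega_X+dd^cu_j\ge0$. Since $(\psi_j)$ is relatively compact in $L^1$, after extracting a subsequence $T_j\to T_\infty$ weakly, and because $dT_j=\e_j\,d\omega_X\to0$ the limit $T_\infty=\omega+dd^c\psi_\infty$ is a \emph{closed} positive current in the class $\alpha$. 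The delicate point is that complex Monge-Amp\`ere mass is not weakly lower semicontinuous, so one cannot simply deduce $\alpha^n=\int_XT_\infty^n\le\liminf_j\int_XT_j^n$; this is exactly where $v_+(\omega_X)<+\infty$ enters, furnishing uniform bounds on all mixed masses of the $T_j$ (by the same mechanism as in the upper bound). Combining these with an integration by parts that rewrites $\int_XT_j^n-\alpha^n$ as a sum of terms each carrying a factor $\e_j$ --- plus gradient-energy terms, which are the genuinely subtle ingredient and must be controlled or reabsorbed using the orthogonality relation of Theorem \ref{thm:orthog} --- should yield $\liminf_j\int_XT_j^n\ge\alpha^n$, contradicting the choice of $u_j$. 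Establishing this conservation of Monge-Amp\`ere mass in the limit is the heart of the proof.
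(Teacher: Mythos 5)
Your proof of the first assertion is correct and essentially matches the paper's, and your upper bound $\hat v_-(\omega)\le\alpha^n$ in the second assertion is sound. However, the lower bound $\hat v_-(\omega)\ge\alpha^n$ is a genuine gap: you set up a proof by contradiction via weak limits $T_j\to T_\infty$, correctly identify that Monge--Amp\`ere mass is not weakly lower semicontinuous, and then leave the resolution as a vague "should yield" involving "gradient-energy terms" to be "reabsorbed using the orthogonality relation of Theorem \ref{thm:orthog}". This does not constitute a proof, and the route is also the wrong one.

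The missing insight is that no compactness argument is needed, because the quantity $\int_X(\omega+dd^c\f)^n$ is \emph{topological} when $\omega$ is closed: for \emph{any} smooth $\f$ (whether or not $\omega+dd^c\f\ge0$), Stokes' theorem gives $\int_X(\omega+dd^c\f)^n=\alpha^n$ exactly. So rather than testing only $u=0$ as you do in the upper bound, one tests an arbitrary smooth $\f\in\PSH(X,\omega+\e\omega_X)$ (recalling that $v_-$ can be computed over smooth potentials by Demailly approximation). Expanding $(\omega+\e\omega_X+dd^c\f)^n$ binomially yields $\alpha^n$ plus cross terms $\sum_{j\ge1}\binom nj\e^j\int_X\omega_X^j\wedge(\omega+dd^c\f)^{n-j}$; normalizing so that $-\omega_X\le\omega\le\omega_X$, one writes $\omega+dd^c\f=(2\omega_X+dd^c\f)-(2\omega_X-\omega)$ with $\f\in\PSH(X,2\omega_X)$ and $0\le2\omega_X-\omega\le3\omega_X$, so each cross term is bounded in absolute value by $C\,v_+(\omega_X)$, \emph{uniformly in $\f$}. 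This immediately gives the two-sided estimate $\alpha^n-C\e\,v_+(\omega_X)\le\int_X(\omega+\e\omega_X+dd^c\f)^n\le\alpha^n+C\e\,v_+(\omega_X)$ for all such $\f$, hence $v_-(\omega+\e\omega_X)\in[\alpha^n-C\e\,v_+(\omega_X),\,\alpha^n+C\e\,v_+(\omega_X)]$, and letting $\e\to0$ (using your correctly established monotonicity in $\e$) finishes the proof. In short: the "heart of the proof" you flag is not a delicate conservation-of-mass-in-the-limit statement, but a one-line consequence of closedness of $\omega$ combined with the uniform cross-term bound from $v_+(\omega_X)<+\infty$.
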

   
   When $X$ is K\"ahler, it is classical that any nef class $\alpha \in H_{BC}^{1,1}(X,\R)$ satisfies
$\alpha^n \geq 0$. This inequality is no longer obvious on an arbitrary Hermitian manifold
(we thank J.-P.Demailly for emphasizing this issue)
but, as a consequence of the above lemma, it remains true when $v_+(\omega_X)<+\infty$.

   \begin{proof}
   Assume first that $\omega$ is semi-positive and set $\omega_{\varepsilon}:= \omega+\varepsilon \omega_X$, for $\varepsilon\in (0,1)$. Proposition \ref{pro:bddsMA} ensures that  $v_-(\omega) \leq v_-(\omega_{\varepsilon})$, hence $v_-(\omega) \leq \hat{v}_-(\omega)$. On the other hand, for any $u\in \PSH(X,\omega)\cap L^{\infty}(X)$ we have 
   \begin{flalign*}
   	\int_X (\omega+dd^c u)^n &=\int_X (\omega_{\varepsilon} +dd^c u-\varepsilon \omega_X)^n \\
   	&\geq \int_X (\omega_{\varepsilon}+dd^c u)^n  -C\varepsilon\\
   	& \geq \hat{v}_-(\omega) -C\varepsilon,
   \end{flalign*}
   where $C$ is a constant depending on $u$, but it is harmless as we will let $\varepsilon \to 0$ while keeping $u$ fixed. Doing so we obtain $\int_X (\omega+dd^c u)^n \geq \hat{v}_-(\omega)$, and taking infimum over such $u$ we obtain $v_-(\omega) \geq \hat{v}_-(\omega)$, proving the first statement.   
    
    \smallskip
   
   Assume now that $\omega$ is closed and $\{\omega\} \in H^{1,1}_{BC}(X,\R)$ is nef. We can also assume that $-\omega_X \leq \omega\leq \omega_X$.  We pick $\f \in \PSH(X,\omega+\e \omega_X) \cap C^{\infty}(X)$
  and observe that $\PSH(X,\omega+\e \omega_X) \subset \PSH(X,2 \omega_X)$ for $0<\e \leq 1$, hence
 $$
   \int_X (\omega+\e \omega_X +dd^c \f)^n
   = \int_X (\omega+dd^c \f)^n+\sum_{j=1}^n \left( \begin{array}{c}
   n \\ j 
   \end{array} \right)
   \e^j \int_X \omega_X^j \wedge (\omega+dd^c \f)^{n-j}.
   $$
   Writing $\omega+dd^c \f=(2\omega_X+dd^c \f)-(2\omega_X-\omega)$, expanding
   $(\omega+dd^c \f)^{n-j}$ accordingly and using $0 \leq 2\omega_X -\omega \leq 3 \omega_X$,
   we obtain that $ \left | \int_X \omega_X^j \wedge (\omega+dd^c \f)^{n-j} \right |$ is bounded from above by a 
   finite sum of terms   $\int_X \omega_X^{\ell} \wedge (\omega_X+dd^c \f)^{n-\ell}$,
   each of which is bounded from above by $3^n v_+(\omega_X)$. Since $\int_X (\omega+dd^c \f)^n=\alpha^n$,
   we end up with
   $$
   \alpha^n - C \e v_+(\omega_X)\leq \int_X (\omega+\e \omega_X +dd^c \f)^n
   \leq \alpha^n+C \e v_+(\omega_X),
   $$
   using that $\e^j \leq \e$ for all $1 \leq j \leq n$. We infer $\hat{v}_-(\omega)=\alpha^n$.
   \end{proof}
  
     \subsection{Demailly-P\u{a}un conjecture}
     
          \subsubsection{Hermitian currents}
     
      The following is a natural generalization of \cite[Conjecture 0.8]{DP04}:

  \begin{quest} \label{quest:dempaun}
  Let $X$ be a compact complex manifold.
  Let $\omega$ be a  nef
  $(1,1)$-form  such that     $\hat{v}_-(\omega)>0$.
  Does there exist a $\omega$-psh function $\f$ with analytic singularities such that
   the   current   $\omega+dd^c \f $ dominates a Hermitian form ?
  \end{quest}

   We provide a partial answer to Question \ref{quest:dempaun}
   following some ideas of Chiose \cite{Chi13}:
      
      \begin{theorem} \label{thm:dempaun} 
      Let $\omega$ be a nef $(1,1)$-form. 
      \begin{itemize}
      \item If $\hat{v}_{-}(\omega)>0$ and  $v_+(\omega_X)<+\infty$ then $\omega$ is big. 
      \item Conversely if $\omega$ is big and $v_-(\omega_X)>0$ then $\hat{v}_-(\omega)>0$.
      \end{itemize}
       \end{theorem}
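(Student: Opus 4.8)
The plan is to establish the two implications separately. For the converse direction — which is the easier one — I would argue as follows. Suppose $\omega$ is big, so there is an $\omega$-psh function $\rho$ with analytic singularities such that $\omega + dd^c\rho \geq 2\delta\omega_X$ for some $\delta > 0$. Then for every $\e \in (0,1)$ we have $(\omega+\e\omega_X) + dd^c\rho \geq 2\delta\omega_X$ as well, so $\omega+\e\omega_X$ is big with a uniform lower bound. The function $\rho$ is bounded on the Zariski-open set $\Omega$ where it is finite, and one can approximate by $\rho_j := \max(\rho,-j)$, which are bounded $\omega_{\e}$-psh functions. Since $(\omega+\e\omega_X+dd^c\rho_j)^n \geq (\delta\omega_X)^n$ on a fixed open set once $j$ is large (where $\rho_j = \rho$), and this contributes a mass bounded below independently of $j$ and $\e$, one concludes $v_-(\omega+\e\omega_X) \geq \delta^n\int_U \omega_X^n > 0$ uniformly, hence $\hat v_-(\omega) > 0$. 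The hypothesis $v_-(\omega_X)>0$ is used to control the tails/boundary behavior via the bimeromorphic-style argument of Lemma \ref{lem:onedirection}, ensuring the positive mass does not all escape to $\partial\Omega$.

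For the main implication — if $\hat v_-(\omega)>0$ and $v_+(\omega_X)<+\infty$ then $\omega$ is big — I would follow the Chiose--Xiao--P\u{a}un strategy adapted to this setting. Fix a Hermitian metric $\omega_X$ and, for small $\e>0$, work with the semipositive form $\omega_\e := \omega + \e\omega_X + dd^c\f_\e \geq 0$ cohomologous to a perturbation of $\omega$. The goal is to produce, for $\e$ small, a bounded $\omega_\e$-psh function $u_\e$ such that $\omega_\e + dd^c u_\e$ dominates a fixed multiple of $\omega_X$; then Demailly's regularization yields analytic singularities and bigness of $\omega$. The key device is a mass-concentration argument: consider $P_{\omega_\e}(h)$ for a carefully chosen obstacle (for instance related to a local potential forcing positivity), use Theorem \ref{thm:orthog} so that $(\omega_\e + dd^c P_{\omega_\e}(h))^n$ sits on the contact set, and combine the lower bound $\hat v_-(\omega)>0$ (which controls the total mass from below) with the upper bound $v_+(\omega_X)<+\infty$ (which controls mixed masses $\int_X \omega_X^j \wedge (\omega_\e+dd^c u)^{n-j}$ from above, cf. Proposition \ref{pro:easybounds}(5)). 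The inequality $\hat v_-(\omega) > 0$ prevents the Monge--Amp\`ere mass from collapsing as $\e \to 0$, while finiteness of $v_+$ rules out the mass escaping along the Hermitian directions.

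The technical heart — and the step I expect to be the main obstacle — is the following: one must show that, for $\e$ small enough, one can solve (or approximate solutions to) a complex Monge--Amp\`ere equation of the form $(\omega_\e + dd^c u_\e)^n = c_\e\, \mu$, where $\mu$ is a fixed positive measure (e.g. a smooth volume form concentrated where one wants positivity) and $c_\e$ is bounded below uniformly in $\e$, and then use the resulting $u_\e$ to feed a Chiose-type argument: if $\omega$ were \emph{not} big, then $\omega_\e - t\omega_X$ would fail to be pseudoeffective for every $t>0$, which by duality (the transcendental version of Boucksom--Demailly--P\u{a}un--Peternell-type orthogonality, or directly via the resolution of the homogeneous Monge--Amp\`ere equation) forces an inequality of the form $\int_X \omega_\e^{n-1}\wedge\omega_X \leq C\,(\text{something} \to 0)$, contradicting $v_+(\omega_X)<+\infty$ combined with $\hat v_-(\omega)>0$. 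Making this duality precise in the Hermitian (non-closed) category — where integration by parts produces the extra $dd^c\omega$ and $d\omega\wedge d^c\omega$ terms — is the delicate point; I would handle the error terms exactly as in the proof of Theorem \ref{thm:pcpr}, absorbing them using that $\omega_\e$ stays in a bounded family, and I would pass to the limit $\e\to 0$ only after all the estimates are uniform. Once the contradiction is reached, $\omega$ must be big.
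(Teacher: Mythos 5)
Your forward implication is on the right track: it really is the Chiose--Lamari duality argument, with Tosatti--Weinkove supplying solutions to $(\omega_j+dd^c\f_j)^n=C_j\,\omega_X\wedge\theta_j^{n-1}$ for Gauduchon test metrics $\theta_j$, and $\hat v_-(\omega)>0$, $v_+(\omega_X)<+\infty$ controlling $C_j\geq\hat v_-(\omega)$ from below and $\int_X(\omega_j+dd^c\f_j)^{n-1}\wedge\omega_X$ from above. The step you flag as delicate --- making the duality precise in the Hermitian category by absorbing $dd^c\omega$ and $d\omega\wedge d^c\omega$ error terms as in Theorem \ref{thm:pcpr} --- is actually a non-issue: one only integrates $dd^c\f_j$ against $\theta_j^{n-1}$ where the Gauduchon condition makes it vanish exactly, so no condition-(B)-type bookkeeping is required. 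What \emph{is} missing from your sketch is the concentration step that turns these two mass bounds into a contradiction: one introduces the set $E=\{\omega_X\wedge\alpha_j^{n-1}\geq 2M\,\omega_X\wedge\theta_j^{n-1}\}$ with $\alpha_j=\omega_j+dd^c\f_j$, shows $E$ has small $\omega_X\wedge\theta_j^{n-1}$-measure, derives the pointwise bound $\alpha_j\geq c\,\omega_X$ on $X\setminus E$ from the Monge--Amp\`ere equation, and integrates $\alpha_j\wedge\theta_j^{n-1}$ over $X\setminus E$ to contradict the near-orthogonality $\int_X\omega\wedge\theta_j^{n-1}\leq j^{-1}$.

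Your converse direction has a genuine logical gap. You lower-bound the Monge--Amp\`ere mass of one well-chosen competitor $\rho_j=\max(\rho,-j)$ and conclude $v_-(\omega+\e\omega_X)\geq\delta^n\int_U\omega_X^n$; but $v_-(\omega+\e\omega_X)$ is an infimum over \emph{all} bounded $(\omega+\e\omega_X)$-psh functions, so giving a lower bound for a single candidate says nothing about the infimum --- some other $u$ could a priori have arbitrarily small mass. The correct mechanism is the envelope trick you only gesture at: given an arbitrary bounded $u\in\PSH(X,\omega+\e\omega_X)$ and $\psi$ with analytic singularities satisfying $\omega+dd^c\psi\geq\omega_X$, set $v:=P_{\omega_X}(u-\psi)$. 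Then $v$ is a bounded $\omega_X$-psh function whose Monge--Amp\`ere measure is, by Theorem \ref{thm:orthog}, carried by the contact set $\mathcal{C}=\{v=u-\psi\}\subset\{\psi>-\infty\}$; since $v+\psi\leq u$ with equality on $\mathcal{C}$, Lemma \ref{lem:Demkey} together with $\omega+\e\omega_X+dd^c(v+\psi)\geq\omega_X+dd^cv$ yields $\int_X(\omega_X+dd^cv)^n\leq\int_X(\omega+\e\omega_X+dd^cu)^n$, hence $v_-(\omega+\e\omega_X)\geq v_-(\omega_X)>0$ for every $\e>0$. This is where $v_-(\omega_X)>0$ genuinely enters --- as the base of the comparison, not as tail or boundary control.
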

      \begin{proof}
    We assume without loss of generality that $\omega \leq \omega_X/2$.
   
   We first assume that $\hat{v}_{-}(\omega)>0$, $v_+(\omega_X)<+\infty$, and we prove that $\omega$ is big. An application of Hahn-Banach theorem as in \cite[Lemma 3.3]{Lam99} shows that
    the existence of a Hermitian current $\omega+dd^c \psi \geq \delta \omega_X$ is equivalent to
    the inequalities
    $$
    \int_X \omega \wedge \theta^{n-1} \geq \delta \int_X \omega_X \wedge \theta^{n-1},
    $$
    for all Gauduchon metrics $\theta$. 
     Assume by contradiction that there exists a sequence
    of Gauduchon metrics $\theta_j$ such that
    $$
    \int_X \omega \wedge \theta_j^{n-1} \leq \frac{1}{j} \int_X \omega_X \wedge \theta_j^{n-1}.
    $$
    We can normalize the latter so that $\int_X \omega_X \wedge \theta_j^{n-1}=1$.
    
    Set $\omega_j=\omega+\frac{1}{j}\omega_X$ and note that $\omega_j \leq \omega_X$ for $j \geq 2$.
   Since $\omega$ is nef, one can find $\p_j \in {\mathcal C}^{\infty}(X,\R)$
   such that $\omega_j+dd^c \p_j$ is a Hermitian form, hence
   the main result of  \cite{TW10} ensures that there exist constants
   $C_j>0$ and
   $\f_j \in\PSH(X,\omega_j) \cap {\mathcal C}^{\infty}(X)$ such that
   $\sup_X \f_j=0$ and
   $$
   (\omega_j+dd^c \f_j)^n=C_j \omega_X \wedge \theta_j^{n-1}.
   $$
   It follows from Proposition \ref{pro:bddsMA} that 
   $$
   C_j=\int_X (\omega_j+dd^c \f_j)^n \geq v_-(\omega_j) \geq \hat{v}_-(\omega)>0,
   $$
  while by assumption
  $
  \int_X (\omega_j+dd^c \f_j)^{n-1} \wedge \omega_X \leq M:=v_{+,n-1}(\omega_X)
  $
   is   bounded from above.

   We set  $\alpha_j:=\omega_j+dd^c \f_j$ and consider
   $$
   E:=\{ x \in X , \; \omega_X \wedge \alpha_j^{n-1} \geq 2M \omega_X \wedge \theta_j^{n-1} \}.
   $$
   This set has small $\omega_X \wedge \theta_j^{n-1}$ measure since
   $$
   \int_E \omega_X \wedge \theta_j^{n-1} \leq \frac{1}{2M} \int_E \omega_X \wedge \alpha_j^{n-1} \leq \frac{1}{2},
   $$
   thus $\int_{X \setminus E} \omega_X \wedge \theta_j^{n-1} \geq \frac{1}{2}$,
   thanks to the normalization $\int_X \omega_X \wedge \theta_j^{n-1}=1$.
   
   We can compare $\omega_X$ and $\alpha_j$ in $X \setminus E$ since
   $$
   \omega_X \wedge \alpha_j^{n-1} \leq 2M \omega_X \wedge \theta_j^{n-1}=\frac{2M}{C_j} \alpha_j^n
   \leq \frac{2M}{\hat{v}_-(\omega)} \alpha_j^n.
   $$
   Thus $\alpha_j \geq \frac{\hat{v}_-(\omega)}{2nM} \omega_X$ in $X \setminus E$ and we infer
   $$
   \int_{X \setminus E} \alpha_j \wedge \theta_j^{n-1} \geq 
 \frac{\hat{v}_-(\omega)}{2nM}  \int_{X \setminus E} \omega_X \wedge \theta_j^{n-1} \geq \frac{\hat{v}_-(\omega)}{4nM} >0,
   $$
   which contradicts
 \begin{eqnarray*}
    \int_{X} \alpha_j \wedge \theta_j^{n-1} 
    &=&\int_X \omega \wedge \theta_j^{n-1}+\frac{1}{j} \int_X \omega_X \wedge \theta_j^{n-1}+\int_X dd^c \f_j \wedge \theta_j^{n-1} \\
    &\leq& \frac{2}{j} \int_X \omega_X \wedge \theta_j^{n-1}= \frac{2}{j} \rightarrow 0,
 \end{eqnarray*}
  where   $\int_X dd^c \f_j \wedge \theta_j^{n-1}=0$ follows from the Gauduchon property of $\theta_j$.

   We next assume that $\omega$ is big, $v_{-}(\omega_X)>0$, and we prove that $\hat{v}_{-}(\omega)>0$  by an argument similar to that of Theorem \ref{thm:bimerom}.  Fix a $\omega$-psh function $\psi$ with analytic singularities such that $\omega+dd^c \psi \geq \delta \omega_X$ for some $\delta >0$. We can assume that $\delta =1$ and $\sup_X \psi =0$. We prove that $v_{-}(\omega+\varepsilon \omega_X) \geq v_{-}(\omega_X)$ for all $\varepsilon>0$.  Fix $\varepsilon>0$, $u\in \PSH(X,\omega+\varepsilon \omega_X)\cap L^{\infty}(X)$, and set $v=P_{\omega_X}(u-\psi)$. The open set $G=\{\psi >-1\}$ is not empty hence it is non-pluripolar.  On $G$ we have $u \leq u-\psi \leq u+1 \leq \sup_X u+1$. We then get $v-\sup_X u -1\leq V_{G,\omega}^*$, where $V_{G,\omega}$ is the extremal function of $E$ (see the proof of Lemma \ref{lem: Josefson semipositive} for its definition). It follows that $v$ is a bounded $\omega_X$-psh function and $(\omega_X+dd^c v)^n$ is supported on the contact set $\mathcal{C}= \{v= u-\psi\} \subset \{\psi>-\infty\}$. Since $v+\psi \leq u$ with equality on $\{\psi>-\infty\} \cap \mathcal{C}= \mathcal{C}$, Lemma \ref{lem:Demkey} ensures that
		\[
		{\bf 1}_{\{\psi>-\infty\} \cap \mathcal{C}}(\omega +\varepsilon \omega_X + dd^c (v+\psi))^n \leq {\bf 1}_{\{\psi>-\infty\} \cap \mathcal{C}}(\omega +\varepsilon \omega_X +dd^c u)^n. 
		\]
		Using $\omega+dd^c \psi \geq \omega_X$ and the fact that $(\omega_X+dd^c v)^n(\psi=-\infty)=0$ since $v$ is bounded, we thus arrive at (noting that $(\omega_X+dd^c v)^n$ is supported on $\mathcal{C}$)
		\[
		\int_X (\omega_X+dd^c v)^n = \int_{\mathcal{C}} (\omega_X+dd^c v)^n \leq \int_X (\omega+\varepsilon \omega_X+ dd^c u)^n.
		\]
		We thus get  $v_{-}(\omega+\varepsilon \omega_X) \geq v_{-}(\omega_X)>0$, for all $\varepsilon>0$, hence $\hat{v}_{-}(\omega) >0$. 
		    \end{proof}

              This result provides in particular the following   answer to Question \ref{quest:dempaun}:

          \begin{coro}    \label{cor:dempaun}
          The answer to Question \ref{quest:dempaun} is positive if 
   \begin{itemize}
   \item either $n=2$  ($X$ is any compact surface);
   \item or $n=3$ and  $X$ admits a pluriclosed metric;
   \item or $n$ is arbitrary and $X$ belongs to the Fujiki class;
    \item orelse $n$ is arbitrary and $X$ admits  a Guan-Li metric.
\end{itemize}          
      \end{coro}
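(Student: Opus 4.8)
The plan is to derive each of the four listed cases from Theorem \ref{thm:dempaun} by exhibiting, in each situation, the two hypotheses it requires: that $\omega$ is nef with $\hat{v}_-(\omega)>0$, and that $v_+(\omega_X)<+\infty$. The first hypothesis is exactly the standing assumption in Question \ref{quest:dempaun}, so the entire content of the corollary is the verification that $v_+(\omega_X)<+\infty$ holds under each of the four geometric conditions. Thus the proof is essentially a dispatch to the results of Section \ref{sec:mass} and Proposition \ref{pro:easybounds}.

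I would proceed case by case. If $n=2$, then $v_{+,n}(\omega_X)=v_{+,2}(\omega_X)$, and the condition $dd^c\omega_X^{n-2}=dd^c\omega_X^0=dd^c 1=0$ is trivially satisfied, so part (3) of Proposition \ref{pro:easybounds} gives $v_+(\omega_X)<+\infty$ directly (alternatively one invokes that every compact surface carries a Gauduchon metric with $dd^c\omega^2=0$ for bidegree reasons, reducing to part (4)). If $n=3$ and $X$ admits a pluriclosed metric $\tilde\omega_X$ with $dd^c\tilde\omega_X=0$, then in dimension three $dd^c\tilde\omega_X^{n-2}=dd^c\tilde\omega_X=0$, so part (3) of Proposition \ref{pro:easybounds} applies to $\tilde\omega_X$; since finiteness of $v_+$ is independent of the Hermitian metric chosen — this is part (1) of Proposition \ref{pro:bddsMA} — we conclude $v_+(\omega_X)<+\infty$ for the original $\omega_X$. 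If $X$ belongs to the Fujiki class, Corollary \ref{cor:Fujiki} gives $v_+(\omega_X)<+\infty$ immediately. Finally, if $X$ admits a Guan-Li metric $\tilde\omega_X$, i.e. one with $dd^c\tilde\omega_X=0$ and $dd^c\tilde\omega_X^2=0$, then part (4) of Proposition \ref{pro:easybounds} yields $v_+(\tilde\omega_X)=V_{\tilde\omega_X}<+\infty$, and again Proposition \ref{pro:bddsMA}(1) transfers this to $\omega_X$.

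In every case we then apply the first bullet of Theorem \ref{thm:dempaun}: from $\hat{v}_-(\omega)>0$ (the hypothesis of Question \ref{quest:dempaun}) and $v_+(\omega_X)<+\infty$ we conclude that $\omega$ is big, which is precisely the assertion that there exists a $\omega$-psh function $\f$ with analytic singularities such that $\omega+dd^c\f$ dominates a Hermitian form. This is the positive answer to Question \ref{quest:dempaun} in each listed case.

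There is no serious obstacle here: the corollary is a packaging of already-established facts. The only point requiring a moment's care is the metric-independence step — making sure that the relevant auxiliary metric ($\tilde\omega_X$ pluriclosed, Astheno-K\"ahler, or Guan-Li) can be compared with the fixed $\omega_X$ so that Proposition \ref{pro:bddsMA}(1) applies; this is immediate since on a compact manifold any two Hermitian metrics are mutually bounded by a constant, which is exactly what the proof of Proposition \ref{pro:bddsMA} uses. One should also note that the standing hypothesis $\int_X\omega_X^n>0$ needed throughout Section \ref{sec:mass} is automatic for a genuine Hermitian metric.
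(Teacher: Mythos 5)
Your proposal is correct and follows exactly the route the paper intends: in each of the four cases one verifies $v_+(\omega_X)<+\infty$ (via Proposition \ref{pro:easybounds} parts (3)/(4) plus the metric-independence of Proposition \ref{pro:bddsMA}(1), or via Corollary \ref{cor:Fujiki} in the Fujiki case) and then invokes the first bullet of Theorem \ref{thm:dempaun}, whose conclusion that $\omega$ is big is by definition the affirmative answer to Question \ref{quest:dempaun}. The paper offers no written proof beyond pointing to Proposition \ref{pro:easybounds} and the Buchdahl--Lamari precedent for $n=2$, so your case-by-case dispatch matches it in substance and merely makes the details explicit.
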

      
      Let us stress that the $2$-dimensional setting is due to 
      Buchdahl \cite{Buch99} and Lamari \cite{Lam99}.
      The three dimensional case follows from Proposition \ref{pro:easybounds}.

      \subsubsection{Transcendental Grauert-Riemenschneider conjecture}
      
      Let $L \rightarrow X$ be a semi-positive holomorphic line bundle with $c_1(L)^n>0$.
      An influential conjecture of Grauert-Riemenschneider \cite{GR70} asked whether 
      the existence of such a line bundle implies that $X$ is Moishezon
      (i.e. bimeromorphically equivalent to a projective manifold).
      
      This conjecture has been solved positively by Siu in \cite{Siu84} (see also \cite{Dem85}).
      Demailly and P\u{a}un have proposed a transcendental version of this conjecture:
      
      \begin{conj} \cite[Conjecture 0.8]{DP04}
      Let $X$ be a compact complex manifold of dimension $n$.
      Assume that $X$ posseses a nef class $\alpha \in H_{BC}^{1,1}(X,\R)$ such that
      $\alpha^n>0$. Then      $X$ belongs to the Fujiki class.
      \end{conj}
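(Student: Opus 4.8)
The strategy is to extract a K\"ahler current in $\alpha$ and then invoke the (by now classical) fact that a compact complex manifold carrying a K\"ahler current lies in the Fujiki class. The whole argument goes through exactly when $v_+(\omega_X)<+\infty$, and establishing this finiteness is the one delicate ingredient. So I would first fix a Hermitian metric $\omega_X$ and a smooth closed real $(1,1)$-form $\omega\in\alpha$; since $\alpha$ is nef, $\omega$ is a nef form in the sense of the present article. Assuming $v_+(\omega_X)<+\infty$, Lemma~\ref{lem: lem v hat and v} identifies $\hat v_-(\omega)$ with $\alpha^n$, which is positive by hypothesis. Hence the first assertion of Theorem~\ref{thm:dempaun} applies and yields an $\omega$-psh function $\rho$ with analytic singularities such that $\omega+dd^c\rho\ge\delta\,\omega_X$ for some $\delta>0$; as $\omega$ is closed, $T:=\omega+dd^c\rho$ is a closed positive $(1,1)$-current dominating a Hermitian form, i.e. a K\"ahler current representing $\alpha$.

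It then remains to deduce $X\in\mathcal C$ from the existence of such a $T$. This is a standard consequence of Demailly's regularization together with Hironaka's resolution: one may assume $T$ has analytic singularities along a subvariety, pull $T$ back by a composition of blow-ups $\pi\colon\hat X\to X$ so that $\pi^*T=\theta+c[D]$ with $\theta$ a smooth closed form and $D$ an effective divisor with simple normal crossing support, and then correct $\theta$ on a neighbourhood of $D$ to obtain a genuine K\"ahler form on $\hat X$. Thus $\hat X$ is K\"ahler and $X$, being bimeromorphic to $\hat X$, belongs to $\mathcal C$. (Conversely, if $X\in\mathcal C$ then Corollary~\ref{cor:Fujiki} already gives $0<v_-(\omega_X)\le v_+(\omega_X)<+\infty$, so the finiteness hypothesis holds automatically on every manifold to which the conjecture's conclusion applies.)

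The crux, and the only genuine obstacle, is the hypothesis $v_+(\omega_X)<+\infty$: without it one cannot even guarantee $\alpha^n\ge 0$, and Theorem~\ref{thm:dempaun} is unavailable. Consequently this approach proves the conjecture unconditionally only where finiteness of $v_+$ is known --- for $n\le 2$, for threefolds admitting a pluriclosed metric (via Proposition~\ref{pro:easybounds}), for manifolds admitting a Guan--Li metric, and for manifolds already in the Fujiki class (Corollary~\ref{cor:Fujiki}). Deciding whether $v_+(\omega_X)<+\infty$ holds on every compact complex manifold --- or producing a counterexample --- is the principal open problem standing between this partial result and the full conjecture.
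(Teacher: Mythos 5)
Your analysis is correct and faithfully reproduces the paper's treatment: the statement is the Demailly--P\u{a}un conjecture, which the paper does not prove unconditionally, and the conditional argument you outline---using Lemma~\ref{lem: lem v hat and v} to identify $\hat v_-(\omega)=\alpha^n$, the first item of Theorem~\ref{thm:dempaun} to extract a K\"ahler current under the hypothesis $v_+(\omega_X)<+\infty$, then desingularizing to land in $\mathcal C$, and Corollary~\ref{cor:Fujiki} for the converse---is precisely the content of Theorem~\ref{thm:GrauRiem} and Corollary~\ref{cor:Fujiki}. You are right to stress that the finiteness of $v_+(\omega_X)$ is the genuine obstacle, that this is exactly what makes the conjecture equivalent to $X\in\mathcal C$ once $\alpha^n>0$, and that the conjecture remains open in full generality.
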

      
          As a direct consequence of Theorem \ref{thm:dempaun}, Lemma \ref{lem: lem v hat and v}, and Corollary \ref{cor:Fujiki},
we obtain the following answer to the transcendental  Grauert-Riemenschneider conjecture:

    \begin{thm}    \label{thm:GrauRiem}
    Let $X$ be a compact $n$-dimensional complex manifold.
    Let $\alpha \in H_{BC}^{1,1}(X,\R)$ be a nef class such that $\alpha^n>0$.
    	  The following are equivalent:
    	  \begin{itemize}
    	  	\item   $\alpha$ contains a K\"ahler current
    	  	\item  $v_+(\omega_X) <+\infty$.  			
    	  \end{itemize}
\end{thm}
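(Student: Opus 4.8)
The statement follows by combining the results already established, in a short cycle of implications; the proof is essentially a matter of bookkeeping. Fix a smooth closed real $(1,1)$-form $\omega$ representing $\alpha$. Since $\alpha$ is nef, for every $\e>0$ there is a smooth real function $\f_\e$ with $\omega+dd^c\f_\e\geq-\e\omega_X$, so $\omega$ is a nef form in the sense defined above, and in particular $\PSH(X,\omega)\neq\emptyset$.

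\emph{From $v_+(\omega_X)<+\infty$ to a K\"ahler current in $\alpha$.} As $\omega$ is closed, represents the nef class $\alpha$, and $v_+(\omega_X)<+\infty$, Lemma \ref{lem: lem v hat and v} gives $\hat{v}_-(\omega)=\alpha^n$, which is positive by hypothesis. Thus $\omega$ is a nef form with $\hat{v}_-(\omega)>0$ and $v_+(\omega_X)<+\infty$, so the first part of Theorem \ref{thm:dempaun} applies: $\omega$ is big, i.e. there are an $\omega$-psh function $\rho$ with analytic singularities and a constant $\delta>0$ such that $\omega+dd^c\rho\geq\delta\omega_X$. Since $\omega$ is closed, $T:=\omega+dd^c\rho$ is a positive closed $(1,1)$-current dominating a Hermitian form, that is, a K\"ahler current, and it represents $\alpha$.

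\emph{From a K\"ahler current in $\alpha$ back to $v_+(\omega_X)<+\infty$.} A compact complex manifold carrying a K\"ahler current belongs to the Fujiki class $\mathcal{C}$: one applies Demailly's regularization to reduce to a K\"ahler current with analytic singularities and then passes to a log-resolution of the associated ideal, on which the relevant class becomes K\"ahler. Corollary \ref{cor:Fujiki} then yields $0<v_-(\omega_X)\leq v_+(\omega_X)<+\infty$, which closes the cycle and proves the equivalence.

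The genuine content of the argument is entirely in Theorem \ref{thm:dempaun} (a Chiose-Xiao-Popovici type estimate), which we may take for granted here; checking that the hypotheses match in each step is routine, so I do not expect a serious obstacle. The one point worth scrutiny is the return implication. If one prefers not to invoke the classical fact that a K\"ahler current forces membership in the Fujiki class, one can instead argue directly: from $\omega_X\leq\delta^{-1}T$ one gets, for every bounded $\omega_X$-psh function $u$, that $\omega_X+dd^c u\leq\delta^{-1}\omega+dd^c(\delta^{-1}\rho+u)$, the right-hand side being a positive closed current of class $\delta^{-1}\alpha$; since the Monge-Amp\`ere mass of a bounded potential charges no pluripolar set while the non-pluripolar mass of the $n$-th power of a closed positive current in a nef class is bounded by the cohomological self-intersection, this yields $\int_X(\omega_X+dd^c u)^n\leq\delta^{-n}\alpha^n$ and hence $v_+(\omega_X)\leq\delta^{-n}\alpha^n<+\infty$. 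Making this variant fully rigorous requires handling the analytic singularities of $\rho$ through Demailly's approximation before the mass comparison is legitimate, which is why the route through Corollary \ref{cor:Fujiki} is the cleaner one to present.
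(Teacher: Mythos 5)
Your argument is exactly the paper's: Lemma \ref{lem: lem v hat and v} gives $\hat v_-(\omega)=\alpha^n>0$, Theorem \ref{thm:dempaun} then upgrades this to a K\"ahler current in $\alpha$ when $v_+(\omega_X)<+\infty$, and the converse follows from the classical fact that a K\"ahler current places $X$ in the Fujiki class together with Corollary \ref{cor:Fujiki}. Your sketch of a more direct return implication via non-pluripolar mass comparison is a reasonable side remark but not needed; the main chain of implications matches the paper.
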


 Since a K\"ahler current with analytic singularities can be desingularized
 after finitely many blow-ups producing a K\"ahler form, we obtain:

\begin{coro}
Let $\alpha \in H_{BC}^{1,1}(X,\R)$ be a nef class such that $\alpha^n>0$.
Then  $X$ belongs to the Fujiki class
if and only if $v_+(\omega_X)<+\infty$.
\end{coro}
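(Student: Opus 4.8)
The plan is to read this corollary off from Theorem~\ref{thm:GrauRiem}, which under the standing hypotheses (``$\alpha$ nef'', ``$\alpha^n>0$'') already equates ``$\alpha$ contains a K\"ahler current'' with ``$v_+(\omega_X)<+\infty$'', together with the classical relation between K\"ahler currents and the Fujiki class. So there are really only two things to assemble.

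The implication ``$X\in\mathcal C\ \Longrightarrow\ v_+(\omega_X)<+\infty$'' requires nothing new: it is exactly Corollary~\ref{cor:Fujiki}, which in turn rests on the bimeromorphic invariance of Theorem~\ref{thm:bimerom} and on the identity $v_+(\omega_X)=\int_X\omega_X^n<+\infty$ that holds whenever $\omega_X$ is a K\"ahler form. Note that this direction does not even use $\alpha$.

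For the converse, suppose $v_+(\omega_X)<+\infty$. Theorem~\ref{thm:GrauRiem} then furnishes a K\"ahler current $T\in\alpha$. I would first apply Demailly's regularization theorem \cite{Dem92} to replace $T$ by a cohomologous K\"ahler current $T'=\omega+dd^c\varphi\geq\delta\omega_X$ whose potential $\varphi$ has analytic singularities along a closed analytic subset of $X$. Next I would invoke Hironaka's resolution of singularities to obtain a finite composition of blow-ups with smooth centres $\pi\colon\widetilde X\to X$ along which $\pi^*T'$ splits as a smooth closed semi-positive $(1,1)$-form plus the integration current over an effective $\R$-divisor supported in the exceptional set; a standard perturbation argument over the exceptional divisors then upgrades this to an honest K\"ahler form on $\widetilde X$. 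Since $\widetilde X$ is compact K\"ahler and bimeromorphic to $X$, it follows by definition that $X$ belongs to the Fujiki class $\mathcal C$.

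The only genuinely delicate ingredient is this last desingularization step --- producing a K\"ahler metric on a bimeromorphic model of $X$ out of a K\"ahler current with analytic singularities. This is a classical fact essentially due to Demailly--P\u{a}un (see \cite{DP04}); I would quote it rather than reprove it, since every other ingredient is already available in the preceding sections. Apart from this quoted input, the argument is a pure assembly of Theorems~\ref{thm:bimerom}, \ref{thm:GrauRiem} and Corollary~\ref{cor:Fujiki}.
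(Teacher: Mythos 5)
Your argument is correct and follows the paper's own route exactly: one direction is Corollary~\ref{cor:Fujiki}, and the other is Theorem~\ref{thm:GrauRiem} combined with the standard fact (via Demailly regularization and Hironaka resolution) that a K\"ahler current with analytic singularities can be desingularized into a genuine K\"ahler form on a bimeromorphic model. The paper states this precisely as the one-line justification preceding the corollary, so your proof coincides with theirs.
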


 \subsection{Transcendental holomorphic Morse inequalities}
 
The following conjecture has been proposed by Boucksom-Demailly-P\u{a}un-Peternell,
as a transcendental counterpart to the
holomorphic Morse inequalities for integral classes due to Demailly:

\begin{conj}\cite[Conjecture 10.1.ii]{BDPP13}
 Let $X$ be a compact $n$-dimensional complex manifold.
Let $\alpha, \beta \in H^{1,1}_{BC}(X,\mathbb{C})$ be nef classes such that 
 	$
 	\alpha^n > n \alpha^{n-1} \cdot  \beta. 
 	$
 	
 	Then $\alpha -\beta$ contains a K\"ahler current and ${\rm Vol}(\alpha-\beta) \geq \alpha^n - n \alpha^{n-1} \cdot  \beta$.
\end{conj}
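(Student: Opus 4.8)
\emph{Proof strategy.} I would address only the qualitative part --- that $\alpha-\beta$ contains a K\"ahler current --- and, in accordance with the methods of this article, only under the additional hypothesis $v_+(\omega_X)<+\infty$; by Theorem~\ref{thm:GrauRiem} and Corollary~\ref{cor:Fujiki} this is equivalent to $X$ belonging to the Fujiki class, and the resulting statement is Theorem~\ref{thm:morse2}. The plan is to run the proof of Theorem~\ref{thm:dempaun} (the case $\beta=0$), making room for $\beta$ as in Chiose~\cite{Chi13}, Xiao~\cite{Xiao15} and Popovici~\cite{Pop16}. First I would reduce bigness of $\alpha-\beta$ to a numerical condition via the Hahn--Banach duality of Lamari~\cite[Lemma 3.3]{Lam99}: since $\theta^{n-1}$ is $dd^c$-closed for every Gauduchon metric $\theta$, the class $\alpha-\beta$ contains a K\"ahler current iff there is $\delta>0$ with $\int_X(\alpha-\beta)\we\theta^{n-1}\ge\delta\int_X\omega_X\we\theta^{n-1}$ for all Gauduchon $\theta$. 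Arguing by contradiction, I fix Gauduchon metrics $\theta_j$, normalized by $\int_X\omega_X\we\theta_j^{n-1}=1$, with $\int_X(\alpha-\beta)\we\theta_j^{n-1}\le 1/j$.

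Next, for a small $\e>0$ (to be let to $0$ only after $j\to+\infty$) I would use nefness to fix smooth \emph{closed} representatives $\hat\alpha_\e\in\alpha+\e\{\omega_X\}$, Hermitian, and $\hat\beta_\e\in\beta+\e\{\omega_X\}$ with $\hat\beta_\e\ge\tfrac{\e}{2}\,\omega_X>0$, and apply the Tosatti--Weinkove theorem~\cite{TW10} to solve
$$
(\hat\alpha_\e+dd^c\f_{j,\e})^n=C_{j,\e}\,\hat\beta_\e\we\theta_j^{n-1},\qquad \sup_X\f_{j,\e}=0,
$$
for a unique $C_{j,\e}>0$ and a smooth $\f_{j,\e}$, with $\gamma:=\hat\alpha_\e+dd^c\f_{j,\e}>0$ closed and cohomologous to $\alpha+\e\{\omega_X\}$. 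Then, by Stokes' theorem,
$$
\int_X\gamma^n=(\alpha+\e\{\omega_X\})^n,\qquad \int_X\gamma^{n-1}\we\hat\beta_\e=(\alpha+\e\{\omega_X\})^{n-1}\cdot(\beta+\e\{\omega_X\}),
$$
so the hypothesis $\alpha^n>n\,\alpha^{n-1}\cdot\beta$ and continuity of intersection numbers give, for $\e$ small, a definite gap $\int_X\gamma^n-n\int_X\gamma^{n-1}\we\hat\beta_\e\ge\delta_0>0$ uniform in $j$. Moreover $C_{j,\e}\int_X\hat\beta_\e\we\theta_j^{n-1}=\int_X\gamma^n$ together with $\int_X\hat\beta_\e\we\theta_j^{n-1}\ge\tfrac{\e}{2}$ keeps $C_{j,\e}$ bounded in $j$; and, by the Gauduchon property, $\int_X\gamma\we\theta_j^{n-1}=\int_X\hat\alpha_\e\we\theta_j^{n-1}$, so that $\int_X(\gamma-\hat\beta_\e)\we\theta_j^{n-1}=\int_X(\alpha-\beta)\we\theta_j^{n-1}\le 1/j$.

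The heart of the matter is then to contradict this last inequality by producing a uniform positive lower bound for $\int_X(\gamma-\hat\beta_\e)\we\theta_j^{n-1}$. Writing $\gamma^{n-1}\we\hat\beta_\e=\tfrac1n(\mathrm{tr}_\gamma\hat\beta_\e)\,\gamma^n$ and using the pointwise Khovanskii--Teissier/reverse-H\"older and arithmetic--geometric inequalities for the positive $(1,1)$-forms $\gamma,\hat\beta_\e,\theta_j$, I would decompose $X$ according to the size of the relevant trace: the gap $\delta_0$ and a Chebyshev estimate bound the $\gamma^n$-mass (equivalently, via $\gamma^n=C_{j,\e}\hat\beta_\e\we\theta_j^{n-1}$, a fraction of the $\hat\beta_\e\we\theta_j^{n-1}$-mass) of the ``bad'' region; on the complement the equation together with $\hat\beta_\e\ge\tfrac{\e}{2}\omega_X$ forces the densities $\gamma\we\theta_j^{n-1}$ and $\hat\beta_\e\we\theta_j^{n-1}$ to compare with a fixed favourable ratio, and also $\gamma\ge\delta_\e\,\omega_X$ there; a second Chebyshev estimate --- of the type used at the end of the proof of Theorem~\ref{thm:dempaun}, relying on the finiteness $v_{+,n-1}(\omega_X)<+\infty$ from Proposition~\ref{pro:easybounds} to keep the mixed masses bounded against the normalization $\int_X\omega_X\we\theta_j^{n-1}=1$ --- then shows that this good region has $\theta_j^{n-1}$-measure bounded below uniformly in $j$. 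Integrating yields $\int_X(\gamma-\hat\beta_\e)\we\theta_j^{n-1}\ge c(\e)>0$, which contradicts $\le 1/j$ for $j$ large, and letting $\e\to0$ finishes.

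The step I expect to be the main obstacle is exactly this last one: making the pointwise comparison between $\gamma\we\theta_j^{n-1}$ and $\hat\beta_\e\we\theta_j^{n-1}$ quantitative on a region whose $\theta_j^{n-1}$-measure is under control \emph{uniformly in $j$}. This is where the strict inequality $\alpha^n>n\,\alpha^{n-1}\cdot\beta$ genuinely does its work, and --- because in the general form of the statement (Theorem~\ref{thm:morse2}) one works with non-closed reference forms, for which Monge--Amp\`ere masses are no longer cohomological --- it forces one to combine the finiteness results of \S3 with the envelope technology of \S2, namely the orthogonality relation of Theorem~\ref{thm:orthog} and the domination principle of Proposition~\ref{pro:domination}, in place of the classical comparison principle available only for closed $\omega_X$. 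The sharper quantitative bound $\vol(\alpha-\beta)\ge\alpha^n-n\,\alpha^{n-1}\cdot\beta$ lies deeper --- it is known in the projective case via Witt-Nystr\"om's Okounkov-body construction~\cite{WN19} --- and I would not attempt it along these lines.
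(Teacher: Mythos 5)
Your high-level plan (reduce to the qualitative statement under $v_+(\omega_X)<+\infty$, apply Lamari's Hahn--Banach duality to get a sequence of Gauduchon metrics violating the bigness criterion, solve a Tosatti--Weinkove Monge--Amp\`ere equation and derive a contradiction from the numerical hypothesis) is indeed the one the paper follows for Theorem \ref{thm:morse}. But there is a genuine gap at the point where you ``fix smooth \emph{closed} representatives $\hat\alpha_\e\in\alpha+\e\{\omega_X\}$, Hermitian, and $\hat\beta_\e\in\beta+\e\{\omega_X\}$ with $\hat\beta_\e\ge\frac{\e}{2}\omega_X$.'' On a non-K\"ahler manifold $\omega_X$ is not closed, so $\{\omega_X\}$ is not a Bott--Chern class, and more to the point there need be no \emph{closed} Hermitian representative of $\alpha$ perturbed by $\e\omega_X$: nefness only provides $\varphi_\e$ smooth with $\omega+\e\omega_X+dd^c\varphi_\e>0$, and this form is not closed because $\omega_X$ is not. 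Consequently the Stokes shortcuts $\int_X\gamma^n=(\alpha+\e\{\omega_X\})^n$ and $\int_X\gamma^{n-1}\wedge\hat\beta_\e=(\alpha+\e\{\omega_X\})^{n-1}\cdot(\beta+\e\{\omega_X\})$ are unavailable; in fact this is precisely the difficulty that the hypothesis $v_+(\omega_X)<+\infty$ is there to control. The paper does \emph{not} get these identities for free: it takes $\omega_\e=\omega+\e\omega_X+dd^c\varphi_\e$ merely Hermitian (not closed), and spends the last third of the proof showing that $\int_X(\omega_\e+dd^cu_\e)^n\to\alpha^n$ and $\int_X(\omega_\e+dd^cu_\e)^{n-1}\wedge\omega'_\e\to\alpha^{n-1}\cdot\beta$ as $\e\to0$, with error terms explicitly bounded by $C\e\,v_+(\omega_X)$ via repeated binomial expansions and Proposition \ref{pro:easybounds}(5). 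Your proposal never invokes $v_+(\omega_X)<+\infty$ at this step at all, only at the end via $v_{+,n-1}(\omega_X)$; as written it silently assumes a closedness that is unavailable.

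The second divergence is at the final estimate. For the case $\beta\ne0$ the paper does \emph{not} run the Chebyshev-type decomposition you outline (that is the argument used in Theorem \ref{thm:dempaun}, where $\beta=0$); instead it applies Popovici's pointwise Cauchy--Schwarz inequality (Lemma \ref{lem: CS Popovici}) to the triple $(\omega_\e+dd^cu_\e,\;c_\e\omega'_\e,\;\eta_\e)$ to obtain
$\bigl(\int_X(\omega_\e+dd^cu_\e)\wedge\eta_\e^{n-1}\bigr)\bigl(\int_X(\omega_\e+dd^cu_\e)^{n-1}\wedge\omega'_\e\bigr)\ge c_\e/n$
in one stroke, which together with the Gauduchon property and the Lamari inequality forces $n\,\alpha^{n-1}\cdot\beta\ge\alpha^n$ after $\e\to0$. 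Your Chebyshev plan may well be realizable, but as you yourself say it is exactly the step you expect to be the main obstacle, and it is not carried out; in particular it is unclear that the ``good region'' can be given $\theta_j^{n-1}$-measure bounded below uniformly in $j$ without something playing the role of the Popovici inequality. So: same skeleton, but two essential ingredients --- the $v_+$-controlled replacement for Stokes and the Popovici Cauchy--Schwarz lemma --- are missing from your outline.
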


Note that this contains  \cite[Conjecture 0.8]{DP04} as a particular case ($\beta=0$).
This conjecture has recently been established by Witt Nystr\"om \cite{WN19} when $X$ is projective.
Building on works of Xiao \cite{Xiao15} and Popovici \cite{Pop16}
we propose the following characterization which answers the qualitative part:
 
 \begin{theorem} \label{thm:morse}
 	Let $\alpha, \beta \in H^{1,1}_{BC}(X,\mathbb{C})$ be nef classes such that 
 	$
 	\alpha^n > n \alpha^{n-1} \cdot  \beta. 
 	$
 	The following are equivalent:
 	\begin{itemize}
 	\item  $\alpha -\beta$ contains a K\"ahler current;
 	\item $v_+(\omega_X)<+\infty$.
 	\end{itemize}
 \end{theorem}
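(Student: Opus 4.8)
The plan is to follow the same strategy used for Theorem \ref{thm:dempaun} (the case $\beta=0$), inserting the two extra ingredients that handle the subtraction of a nef class $\beta$: the mass inequality of Xiao--Popovici and the Hahn--Banach/duality characterization of K\"ahler currents. As in Theorem \ref{thm:GrauRiem}, one direction is essentially free. If $\alpha-\beta$ contains a K\"ahler current $T=\gamma+dd^c\rho\geq\delta\omega_X$ with analytic singularities, then $X$ is big in the sense of the excerpt and hence (by the standard Hironaka desingularization of analytic singularities) bimeromorphic to a K\"ahler manifold, so $X$ lies in the Fujiki class; Corollary \ref{cor:Fujiki} then gives $v_+(\omega_X)<+\infty$. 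So the substance is the reverse implication: assuming $v_+(\omega_X)<+\infty$ and $\alpha^n>n\,\alpha^{n-1}\cdot\beta$, produce a K\"ahler current in $\alpha-\beta$.

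For that direction I would argue by contradiction using Lamari's duality: if $\alpha-\beta$ contains no K\"ahler current, then for every $\varepsilon>0$ there is a Gauduchon metric $\theta$ (normalizable so that $\int_X\omega_X\wedge\theta^{n-1}=1$) with $\int_X(\alpha-\beta)\wedge\theta^{n-1}\leq\varepsilon$, hence a sequence $\theta_j$ with $\int_X\alpha\wedge\theta_j^{n-1}\leq\int_X\beta\wedge\theta_j^{n-1}+\tfrac1j$. Fix nef representatives and set $\alpha_j$ to be the Monge--Amp\`ere solution to $(\text{rep of }\alpha+\tfrac1j\omega_X+dd^c\varphi_j)^n=C_j\,\omega_X\wedge\theta_j^{n-1}$, exactly as in the proof of Theorem \ref{thm:dempaun}. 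The nefness hypothesis lets us invoke \cite{TW10} to solve this equation; the lower bound $\hat v_-\geq\hat v_-(\alpha)>0$ and the finiteness $v_{+,n-1}(\omega_X)\leq 2^n v_+(\omega_X)<+\infty$ (Proposition \ref{pro:easybounds}) control $C_j$ from below and $\int_X\alpha_j^{n-1}\wedge\omega_X$ from above. The new point is that now one also needs to bound a \emph{mixed} term involving $\beta$: I would choose, for each $j$, a smooth closed form $\beta_j\in\beta$ with $\beta_j\geq-\tfrac1j\omega_X$, and use the Xiao--Popovici type inequality (as in \cite{Xiao15,Pop16}) bounding $\int_X\beta_j\wedge\alpha_j^{n-1}$ in terms of intersection numbers and the masses already controlled, so that the hypothesis $\alpha^n>n\,\alpha^{n-1}\cdot\beta$ translates into the quantitative statement that $\alpha_j$ dominates a fixed multiple of $\omega_X$ off a set of small $\omega_X\wedge\theta_j^{n-1}$-measure. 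Then the same concentration-of-mass argument (split $X=E\sqcup(X\setminus E)$ according to where $\omega_X\wedge\alpha_j^{n-1}$ is large) yields $\int_{X\setminus E}\alpha_j\wedge\theta_j^{n-1}\geq c>0$, contradicting $\int_X\alpha_j\wedge\theta_j^{n-1}\leq\int_X\beta_j\wedge\theta_j^{n-1}+\tfrac2j$, which by the Gauduchon property and $\int_X\beta\wedge\theta_j^{n-1}\to 0$ tends to $0$.

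The main obstacle I anticipate is the mixed-mass estimate that encodes the numerical hypothesis $\alpha^n>n\,\alpha^{n-1}\cdot\beta$: one must extract from $\int_X\alpha_j\wedge\theta_j^{n-1}-\int_X\beta_j\wedge\theta_j^{n-1}\to 0$, together with $(\alpha_j)^n=C_j\,\omega_X\wedge\theta_j^{n-1}$ and the a priori bounds on $C_j$ and on $\int_X\alpha_j^{n-1}\wedge\omega_X$, a \emph{uniform} lower bound on $C_j-n\,\alpha_j^{n-1}\cdot\beta_j/(\text{something})$; this is precisely where the mixed Hodge-index / Xiao--Popovici inequality $(\alpha_j^{n-1}\cdot\beta_j)^? \lesssim \dots$ does the work. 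A secondary technical point is that $\beta$ need only be nef, not semi-positive, so $\beta_j$ is merely $\geq-\tfrac1j\omega_X$; one absorbs the error terms $\tfrac1j\omega_X$ into the $\omega_X$-contributions, using $v_{+,\ell}(\omega_X)\leq 2^n v_+(\omega_X)$ for all $\ell$ (Proposition \ref{pro:easybounds}(5)) to keep every auxiliary mass finite. Finally, once the contradiction is reached one concludes $\alpha-\beta$ contains a K\"ahler current; combined with the first paragraph this closes the equivalence, and Theorem \ref{thm:morse} follows.
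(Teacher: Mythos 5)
There is a genuine gap, and it is concentrated in the step you yourself flag as the ``main obstacle'' and leave open. You propose to solve the Monge--Amp\`ere equation with right-hand side $C_j\,\omega_X\wedge\theta_j^{n-1}$, ``exactly as in the proof of Theorem~\ref{thm:dempaun},'' and then run the same concentration-of-mass argument (split $X=E\sqcup(X\setminus E)$ according to where $\omega_X\wedge\alpha_j^{n-1}$ is large). This cannot work as stated. First, if you keep $\omega_X$ on the right, Popovici's Cauchy--Schwarz estimate (Lemma~\ref{lem: CS Popovici}) produces the mixed quantity $\int_X\alpha_j^{n-1}\wedge\omega_X$ (not $\alpha^{n-1}\cdot\beta$), so the numerical hypothesis $\alpha^n>n\,\alpha^{n-1}\cdot\beta$ never enters; you are left with an unidentified ``$(\alpha_j^{n-1}\cdot\beta_j)^?\lesssim\dots$'' which is precisely the step that has to be filled in. Second, your final contradiction relies on the claim that $\int_X\beta\wedge\theta_j^{n-1}\to 0$, but Lamari's duality applied to $\alpha-\beta$ gives only $\int_X(\alpha-\beta)\wedge\theta_j^{n-1}\to 0$; nothing forces $\int_X\beta\wedge\theta_j^{n-1}$ to be small, so the concentration argument does not contradict anything.

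The fix used in the paper is to change the right-hand side: with Hermitian regularizations $\omega_\varepsilon\in\alpha+\varepsilon\{\omega_X\}$ and $\omega_\varepsilon'\in\beta+\varepsilon\{\omega_X\}$, one solves $(\omega_\varepsilon+dd^cu_\varepsilon)^n=c_\varepsilon\,\omega_\varepsilon'\wedge\eta_\varepsilon^{n-1}$, with the $\beta$-side data $\omega_\varepsilon'$ appearing in the density. Then Lemma~\ref{lem: CS Popovici} with $\theta_1=\omega_\varepsilon+dd^cu_\varepsilon$, $\theta_2=c_\varepsilon\omega_\varepsilon'$, $\theta_3=\eta_\varepsilon$ directly yields
\[
\Bigl(\int_X(\omega_\varepsilon+dd^cu_\varepsilon)\wedge\eta_\varepsilon^{n-1}\Bigr)\Bigl(\int_X(\omega_\varepsilon+dd^cu_\varepsilon)^{n-1}\wedge\omega_\varepsilon'\Bigr)\geq\frac{c_\varepsilon}{n},
\]
and the Gauduchon property together with the Lamari inequality bound the first factor by $1+\varepsilon$. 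No concentration-of-mass splitting is needed. Finally $v_+(\omega_X)<+\infty$ (via Proposition~\ref{pro:easybounds}(5) and Stokes) controls all the $\varepsilon$-error terms and lets one pass to the limit, producing $n\,\alpha^{n-1}\cdot\beta\geq\alpha^n$, contradicting the hypothesis. Your first paragraph (K\"ahler current $\Rightarrow$ Fujiki $\Rightarrow$ $v_+(\omega_X)<+\infty$) does agree with the paper.
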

 
 \begin{proof}
 If $\alpha -\beta$ contains a K\"ahler current, then $X$ belongs to the Fujiki class and we have already observed 
 that $v_+(\omega_X)<+\infty$ (see Corollary \ref{cor:Fujiki}).
 
 \smallskip
 
 We now assume that $v_+(\omega_X)<+\infty$.
 	Let $\omega$ and $\omega'$ be smooth closed real $(1,1)$-forms representing $\alpha$ and $\beta$ respectively. We can assume 
 	without los of generality that $\omega \leq \frac{\omega_X}{2}$ and $\omega' \leq \frac{\omega_X}{2}$.  
 	For each $\varepsilon>0$ we fix smooth functions 
 	$\varphi_{\varepsilon} \in \PSH(X,\omega+ \varepsilon \omega_X)$ and $\psi_{\varepsilon} \in \PSH(X,\omega'+ \varepsilon \omega_X)$
 	such that $\omega_{\varepsilon}:= \omega+ \varepsilon\omega_X + dd^c \varphi_{\varepsilon}$ and $\omega_{\varepsilon}'= \omega'+ \varepsilon \omega_X +dd^c \psi_{\varepsilon}$ are hermitian forms.
 	
 	Assume by contradiction that $\alpha-\beta$ does not contain any K\"ahler current. 
 	It follows from Hahn-Banach theorem as in \cite[Lemma 3.3]{Lam99}   that there exist Gauduchon metrics $\eta_{\varepsilon}$ 
 	such that 
 	\begin{equation}\label{eq: Morse 1}
 		\int_X (\omega_{\varepsilon} -\omega_{\varepsilon}') \wedge \eta_{\varepsilon}^{n-1} 
 		\leq \varepsilon 	\int_X \omega_{\varepsilon}' \wedge \eta_{\varepsilon}^{n-1}. 
 	\end{equation}
 	We normalize  $\eta_{\varepsilon}$ so that $\int_X \omega_{\varepsilon}' \wedge \eta_{\varepsilon}^{n-1}=1$.
 	
 	Using \cite{TW10} we can find  unique constants $c_{\e}>0$ and normalized  functions
 	$u_{\varepsilon} \in \PSH(X,\omega_{\varepsilon}) \cap {\mathcal C}^{\infty}(X)$
 	such that 
 	\[
 	(\omega_{\varepsilon}+ dd^c u_{\varepsilon})^n = c_{\varepsilon} \omega_{\varepsilon}' \wedge \eta_{\varepsilon}^{n-1}, \; \sup_X u_{\varepsilon}=0.
 	\] 
 	Our normalization for $\eta_{\e}$ yields 
 	$c_{\varepsilon}  = \int_X (\omega_{\varepsilon}+ dd^c u_{\varepsilon})^n$. 
Applying Lemma \ref{lem: CS Popovici} below with $\theta_1= \omega_{\varepsilon}+ dd^c u_{\varepsilon}$, 
 	$\theta_2= c_{\varepsilon}\omega_{\varepsilon}'$ and $\theta_3= \eta_{\varepsilon}$, and recalling that 
 	$\theta_1^n= \theta_2 \wedge \theta_3^{n-1}$ with 
 	$\int_X  \theta_1^{n}= \int_X \theta_2 \wedge \theta_3^{n-1}=c_{\e}$, 
 	 we obtain 
 	\[
 	\left (\int_X (\omega_{\varepsilon}+dd^c u_{\e}) \wedge \eta_{\varepsilon}^{n-1} \right ) \left ( \int_X (\omega_{\varepsilon}+dd^c u_{\varepsilon})^{n-1} \wedge \omega_{\varepsilon}' \right ) \geq  \frac{c_{\varepsilon}}{n}.
 	\]
 	Now 
 	$\int_X (\omega_{\varepsilon}+dd^c u_{\e}) \wedge \eta_{\varepsilon}^{n-1} =\int_X \omega_{\varepsilon}  \wedge \eta_{\varepsilon}^{n-1} $
 	because $\eta_{\varepsilon}$ is a Gauduchon metric, 
 	while \eqref{eq: Morse 1} yields 
 	$\int_X \omega_{\varepsilon}  \wedge \eta_{\varepsilon}^{n-1} \leq (1+\e) \int_X \omega'_{\varepsilon}  \wedge \eta_{\varepsilon}^{n-1}=(1+\e)$, hence
 	\[
 	(1+\varepsilon) \int_X (\omega_{\varepsilon}+dd^c u_{\varepsilon})^{n-1} \wedge \omega_{\varepsilon}' \geq \frac{1}{n}\int_X (\omega_{\varepsilon} + dd^c u_{\varepsilon})^n. 
 	\]
 		We finally claim that, as  $\varepsilon \to 0$,
 	\[
 	\int_X (\omega_{\varepsilon} + dd^c u_{\varepsilon})^n \to \alpha^n\; \text{and}\; 
 	\int_X (\omega_{\varepsilon}+dd^c u_{\varepsilon})^{n-1} \wedge \omega_{\varepsilon}' \to \alpha^{n-1} \cdot \beta,
 	\]
 which yields the contradiction	 $n \alpha^{n-1} \cdot \beta \geq \alpha^n$.
 
 \smallskip
 
 We first explain why $\int_X (\omega_{\varepsilon} + dd^c u_{\varepsilon})^n \to \alpha^n$.   Stokes theorem yields
 \begin{eqnarray*}
 \lefteqn{
 \alpha^n =\int_X (\omega+dd^c (u_{\e}+\f_{\e}))^n=\int_X (\omega+\e \omega_X +dd^c (u_{\e}+\f_{\e})-\e \omega_X)^n }\\
 &=& \int_X (\omega_{\e}+dd^c u_{\e})^n+\sum_{j=0}^{n-1} 
 \left( \begin{array}{c} n \\ j  \end{array} \right) \e^{n-j} (-1)^{n-j} \int_X (\omega_{\e} +dd^c u_{\e})^j \wedge \omega_X^{n-j}.
 \end{eqnarray*}
 Since $\omega \leq \frac{\omega_X}{2}$, the function $v_{\e}=u_{\e}+\f_{\e}$ is $\omega_X$-psh for
 $0<\e \leq \frac{1}{2}$, hence
 $$
 0 \leq \int_X (\omega_{\e} +dd^c u_{\e})^j \wedge \omega_X^{n-j} \leq 
 \int_X (\omega_X +dd^c v_{\e})^j \wedge \omega_X^{n-j} \leq 2^n v_+(\omega_X),
 $$
 as follows from Proposition \ref{pro:easybounds}. We infer
 $$
 \left| \alpha^n -\int_X (\omega_{\e}+dd^c u_{\e})^n \right|
 \leq \sum_{j=0}^{n-1} 
 \left( \begin{array}{c} n \\ j  \end{array} \right) \e^{n-j}  
 2^n v_+(\omega_X)
 \leq 4^n \e \, v_+(\omega_X).
 $$
 The conclusion thus follows by letting $\e \to 0$.
 
 We similarly can check that 
 $$
\left|  \alpha^{n-1} \cdot \beta-\int_X (\omega_{\varepsilon}+dd^c u_{\varepsilon})^{n-1} \wedge \omega_{\varepsilon}' \right| 
\leq 2 \cdot 6^n \e \, v_+(\omega_X).
 $$
 Using Stokes theorem again we indeed obtain that  
 	\begin{flalign*}
 		\alpha^{n-1} \cdot \beta &
 		=\int_X (\omega +dd^c \varphi_{\varepsilon} + dd^c u_{\varepsilon})^{n-1}\wedge (\omega'+dd^c \psi_{\varepsilon})\\
 		&= \int_X (\omega_{\e} +  dd^c u_{\varepsilon}   -\varepsilon \omega_X)^{n-1}\wedge (\omega_{\e}'  -\varepsilon \omega_X)\\
 		&= \int_X (\omega_{\varepsilon}+dd^c u_{\varepsilon})^{n-1}\wedge \omega'_{\varepsilon} + O(\e). 
 	\end{flalign*}
Each term $\int_X (\omega_X+dd^c v_{\e})^{\ell} \wedge (\omega_X+dd^c \p_{\e})^p \wedge \omega_X^q$,
with $\ell+p+q=n$, is bounded from above by $3^n v_+(\omega_X)$, as one can check by observing that
the function $\frac{v_{\e}+\p_{\e}}{3}$ is $\omega_X$-psh with 
$$
\int_X (\omega_X+dd^c v_{\e})^{\ell} \wedge (\omega_X+dd^c \p_{\e})^p \wedge \omega_X^q 
\leq 3^n \int_X  \left(\omega_X+dd^c \frac{v_{\e}+\p_{\e}}{3} \right)^n.
$$
 \end{proof}

We have used in the previous proof the following observation of Popovici:

 \begin{lemma}\label{lem: CS Popovici}
 Let $\theta_1,\theta_2,\theta_3$ be hermitian forms  on $X$. Then
 	\[
 	\left ( \int_X \theta_1 \wedge \theta_3^{n-1} \right ) \left ( \int_X \theta_1^{n-1} \wedge \theta_2 \right )
 	\geq \frac{1}{n} \left(\int_X   \sqrt{\frac{\theta_2 \wedge \theta_3^{n-1}}{\theta_1^n}}\theta_1^n \right)^2.
 	\]
 	In particular if   $\theta_1^n=\theta_2 \wedge \theta_3^{n-1}$, then
 		\[
 	\left ( \int_X \theta_1 \wedge \theta_3^{n-1} \right ) \left ( \int_X \theta_1^{n-1} \wedge \theta_2 \right )
 	\geq \frac{1}{n} \left(\int_X   \theta_1^n \right)^2.
 	\]
 \end{lemma}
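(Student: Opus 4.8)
The plan is to reduce the global inequality to a pointwise linear-algebra statement via Cauchy--Schwarz, and to settle the latter by simultaneously diagonalizing the pair $\theta_1,\theta_3$. Since $\theta_1$ is Hermitian, $\theta_1^n$ is a strictly positive volume form, so I may write each top-degree form as a nonnegative density times $\theta_1^n$; put
\[
A:=\frac{\theta_1\wedge\theta_3^{n-1}}{\theta_1^n},\qquad B:=\frac{\theta_1^{n-1}\wedge\theta_2}{\theta_1^n},\qquad C:=\frac{\theta_2\wedge\theta_3^{n-1}}{\theta_1^n},
\]
so that $A>0$ (bounded below on the compact $X$), $B,C\ge 0$, and $C/A$ is a bounded smooth function. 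Writing $\sqrt{C}=\sqrt{C/A}\cdot\sqrt{A}$ and applying Cauchy--Schwarz with respect to the positive measure $\theta_1^n$ gives
\[
\left(\int_X\sqrt{C}\,\theta_1^n\right)^{2}\le\left(\int_X\frac{C}{A}\,\theta_1^n\right)\left(\int_X A\,\theta_1^n\right)=\left(\int_X\frac{C}{A}\,\theta_1^n\right)\left(\int_X\theta_1\wedge\theta_3^{n-1}\right).
\]
Hence it suffices to prove the \emph{pointwise} bound $C\le n\,AB$: granting it, $\int_X(C/A)\,\theta_1^n\le n\int_X B\,\theta_1^n=n\int_X\theta_1^{n-1}\wedge\theta_2$, and dividing by $n$ gives the claim. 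The last sentence of the lemma is the special case $C\equiv 1$, where $\int_X\sqrt{C}\,\theta_1^n=\int_X\theta_1^n$.

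For the pointwise bound, fix a point and choose a complex-linear coframe $dz_1,\dots,dz_n$ in which $\theta_1=\sum_j i\,dz_j\wedge d\bar z_j$ and $\theta_3=\sum_j\mu_j\,i\,dz_j\wedge d\bar z_j$ with all $\mu_j>0$; write $\theta_2=\sum_{j,k}a_{j\bar k}\,i\,dz_j\wedge d\bar z_k$ with Hermitian positive semidefinite matrix $(a_{j\bar k})$, so in particular $a_{k\bar k}\ge 0$. A direct expansion of the three wedge products — in which only the diagonal entries of $\theta_2$ survive — gives
\[
A=\frac{1}{n}\,e_{n-1}(\mu),\qquad B=\frac{1}{n}\sum_k a_{k\bar k},\qquad C=\frac{1}{n}\sum_k\Big(\prod_{j\ne k}\mu_j\Big)a_{k\bar k},
\]
where $e_{n-1}(\mu)=\sum_k\prod_{j\ne k}\mu_j$. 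Therefore
\[
n\,AB-C=\frac{1}{n}\Big(e_{n-1}(\mu)\sum_l a_{l\bar l}-\sum_k\Big(\prod_{j\ne k}\mu_j\Big)a_{k\bar k}\Big)=\frac{1}{n}\sum_{k\ne l}\Big(\prod_{j\ne k}\mu_j\Big)a_{l\bar l}\ge 0,
\]
since each factor $\prod_{j\ne k}\mu_j$ is positive and each $a_{l\bar l}$ is nonnegative. This is exactly $C\le n\,AB$, and the proof is complete.

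I do not anticipate a genuine obstacle. The one point that needs care is that three Hermitian forms cannot in general be diagonalized simultaneously, so one diagonalizes $\theta_1$ and $\theta_3$ together and keeps $\theta_2$ as an arbitrary positive Hermitian matrix; what makes the computation go through is that the densities $A$, $B$, $C$ depend only on the diagonal entries $a_{k\bar k}$ in this coframe, reducing matters to the elementary inequality $\sum_k c_k a_{k\bar k}\le\big(\sum_k c_k\big)\big(\sum_l a_{l\bar l}\big)$ for nonnegative reals. It is also worth double-checking the normalizing constants in the formulas for $A$, $B$, $C$, since the common factor $1/n$ is precisely what produces the $n$ on the right-hand side of the lemma.
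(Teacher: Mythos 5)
Your argument is correct and takes essentially the same route as the paper: Cauchy--Schwarz with respect to the volume form $\theta_1^n$ reduces everything to the pointwise inequality $n\,A\,B \geq C$, which is exactly Popovici's trace estimate $Tr_{\theta_3}(\theta_1)\,Tr_{\theta_1}(\theta_2) \geq Tr_{\theta_3}(\theta_2)$ that the paper invokes as \cite[Lemma 3.1]{Pop16}. The only difference is that where the paper cites Popovici for this pointwise bound, you prove it yourself (correctly) by simultaneously diagonalizing $\theta_1,\theta_3$ and reducing to an elementary inequality for nonnegative reals.
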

 
 We provide the proof as a courtesy to the reader.
 
 \begin{proof}
 It follows from Cauchy-Schwarz inequality that 
 $$
 \left ( \int_X \theta_1 \wedge \theta_3^{n-1} \right ) \left ( \int_X \theta_1^{n-1} \wedge \theta_2 \right )
 	\geq \left ( \int_X \sqrt{\frac{\theta_1 \wedge \theta_3^{n-1}}{\theta_1^n} \frac{\theta_1^{n-1} \wedge \theta_2}{\theta_1^n}} \theta_1^n  \right )^2.
 	  $$
 The elementary pointwise estimate
 	\[
 	Tr_{\theta_3}(\theta_1) Tr_{\theta_1}(\theta_2) \geq Tr_{\theta_3}(\theta_2). 
 	\]
is \cite[Lemma 3.1]{Pop16}. Multiplying by $\frac{\theta_3^n}{\theta_1^n}$ it can be reformulated as 
\begin{equation} \label{eq:pop}
\frac{\theta_1 \wedge \theta_3^{n-1}}{\theta_1^n} 
\cdot \frac{\theta_2 \wedge \theta_1^{n-1}}{\theta_1^n} 
\geq \frac{1}{n} \frac{\theta_2 \wedge \theta_3^{n-1}}{\theta_1^n}.
\end{equation}
The first inequality follows. Moreover 
when $\theta_1^n=\theta_2 \wedge \theta_3^{n-1}$, we infer
$$
\int_X   \sqrt{\frac{\theta_2 \wedge \theta_3^{n-1}}{\theta_1^n}}\theta_1^n 
=  \int_X \theta_1^n.
$$
 \end{proof}

  Motivated by possible extensions of the conjectures of Demailly-P\u{a}un and
 Boucksom-Demailly-P\u{a}un-Peternell, we introduce the following:

  \begin{defi}
  Given $\omega_1,\ldots,\omega_n$ hermitian forms we  consider
  $$
  v_-(\omega_1,\ldots,\omega_n):=\inf \left\{ \int_X (\omega_1+dd^c \f_1) \wedge \cdots \wedge (\omega_n+dd^c \f_n), \;
  \f_j \in {\mathcal P}(\omega_j) \right\},
  $$
  and 
   $$
  v_+(\omega_1,\ldots,\omega_n):=\sup \left\{ \int_X (\omega_1+dd^c \f_1) \wedge \cdots \wedge (\omega_n+dd^c \f_n), \;
  \f_j \in {\mathcal P}(\omega_j) \right\},
  $$
  where ${\mathcal P}(\omega_j):=\PSH(X,\omega_j) \cap L^{\infty}(X)$.
  If the $\omega_j$'s are merely nef we set
  $$
   \hat{ v}_-(\omega_1,\ldots,\omega_n):=\inf_{\e>0}   v_-(\omega_1+\e \omega_X ,\ldots,\omega_n+\e \omega_X).
  $$
  and
    $$
   \hat{ v}_+(\omega_1,\ldots,\omega_n):=\inf_{\e>0}   v_+(\omega_1+\e \omega_X ,\ldots,\omega_n+\e \omega_X).
  $$
  \end{defi}
  
  A straightforward generalization of Theorem \ref{thm:morse} along the lines of Theorem \ref{thm:dempaun}
  is the following:
  
  \begin{thm} \label{thm:morse2}
   Let $X$ be a compact $n$-dimensional complex manifold such that $v_+(\omega_X)<+\infty$.
  Let $\omega,\omega'$ be nef $(1,1)$-forms.
  If $ \hat{ v}_-(\omega)>n  \hat{ v}_+(\omega,\ldots,\omega, \omega')$ then 
  the form $\omega-\omega'$ is big.
  \end{thm}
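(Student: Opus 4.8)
\textbf{Proof plan for Theorem \ref{thm:morse2}.} The plan is to follow the same scheme as the first part of Theorem \ref{thm:dempaun} (Chiose's argument) but replacing the single cohomological inequality $\hat v_-(\omega)>0$ by the mixed inequality $\hat v_-(\omega)>n\hat v_+(\omega,\dots,\omega,\omega')$, and replacing the role of $\omega_X$ in the estimate on $X\setminus E$ by $\omega'$, much as in the proof of Theorem \ref{thm:morse}. Concretely, I would first normalize so that $\omega\le \omega_X/2$ and $\omega'\le \omega_X/2$, and set $\omega_\e=\omega+\e\omega_X$, $\omega'_\e=\omega'+\e\omega_X$; since $\omega,\omega'$ are nef these are cohomologous to Hermitian forms after adding $dd^c$ of smooth potentials, and I may quietly work with those Hermitian representatives. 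Assume for contradiction that $\omega-\omega'$ is \emph{not} big. Then by Hahn--Banach / Lamari duality (\cite[Lemma 3.3]{Lam99}), there is a sequence of Gauduchon metrics $\eta_j$ with $\int_X(\omega_{\e_j}-\omega'_{\e_j})\wedge\eta_j^{n-1}\le \e_j\int_X \omega'_{\e_j}\wedge\eta_j^{n-1}$, normalized so that $\int_X\omega'_{\e_j}\wedge\eta_j^{n-1}=1$, with $\e_j\to 0$.

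Next I would invoke the Tosatti--Weinkove theorem \cite{TW10} to solve
\[
(\omega_{\e_j}+dd^c u_j)^n = c_j\,\omega'_{\e_j}\wedge\eta_j^{n-1},\qquad \sup_X u_j=0,
\]
so that $c_j=\int_X(\omega_{\e_j}+dd^c u_j)^n\ge v_-(\omega_{\e_j})\ge \hat v_-(\omega)>0$ by Proposition \ref{pro:bddsMA} and the definition of $\hat v_-$. Set $\alpha_j=\omega_{\e_j}+dd^c u_j$. The key new point is to control the mixed mass $\int_X \alpha_j^{n-1}\wedge\omega'_{\e_j}$: since $u_j+\f_{\e_j}$ is $2\omega_X$-psh (using $\omega\le\omega_X/2$) and $\psi_{\e_j}$ is the potential making $\omega'_{\e_j}$ Hermitian, an expansion of the type carried out at the end of the proof of Theorem \ref{thm:morse}, together with $v_+(\omega_X)<+\infty$ and the definition of $\hat v_+(\omega,\dots,\omega,\omega')$, gives $\int_X\alpha_j^{n-1}\wedge\omega'_{\e_j}\le \hat v_+(\omega,\dots,\omega,\omega')+o(1)=:M'+o(1)$. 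Now introduce the bad set
\[
E_j:=\Bigl\{\, \alpha_j^{n-1}\wedge\omega'_{\e_j}\ \ge\ 2M'\,\omega'_{\e_j}\wedge\eta_j^{n-1}\,\Bigr\};
\]
since $\int_X\omega'_{\e_j}\wedge\eta_j^{n-1}=1$, Chebyshev gives $\int_{E_j}\omega'_{\e_j}\wedge\eta_j^{n-1}\le \tfrac12+o(1)$, so $\int_{X\setminus E_j}\omega'_{\e_j}\wedge\eta_j^{n-1}\ge\tfrac12-o(1)$. On $X\setminus E_j$ we may compare $\omega'_{\e_j}$ with $\alpha_j$ via
\[
\alpha_j^{n-1}\wedge\omega'_{\e_j}\ \le\ 2M'\,\omega'_{\e_j}\wedge\eta_j^{n-1}\ =\ \frac{2M'}{c_j}\,\alpha_j^n\ \le\ \frac{2M'}{\hat v_-(\omega)}\,\alpha_j^n,
\]
which by the usual pointwise linear-algebra inequality forces $\alpha_j\ge \delta\,\omega'_{\e_j}$ on $X\setminus E_j$, with $\delta=\hat v_-(\omega)/(2nM')$; the hypothesis $\hat v_-(\omega)>nM'$ is exactly what makes $\delta$ bounded below by a positive constant independent of $j$ (in fact $\delta>1/2$ asymptotically, but any fixed $\delta>0$ suffices). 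Hence $\int_{X\setminus E_j}\alpha_j\wedge\eta_j^{n-1}\ge \delta\int_{X\setminus E_j}\omega'_{\e_j}\wedge\eta_j^{n-1}\ge \delta(\tfrac12-o(1))$, bounded away from $0$. On the other hand, since $\eta_j$ is Gauduchon, $\int_X dd^c u_j\wedge\eta_j^{n-1}=0$, so
\[
\int_X\alpha_j\wedge\eta_j^{n-1}=\int_X\omega_{\e_j}\wedge\eta_j^{n-1}\le (1+\e_j)\int_X\omega'_{\e_j}\wedge\eta_j^{n-1}+O(\e_j)=1+o(1)-\!\!\int_{X}(\omega_{\e_j}-\omega'_{\e_j})\wedge\eta_j^{n-1},
\]
and the Lamari inequality shows the last quantity tends to $\int_X\omega'_{\e_j}\wedge\eta_j^{n-1}\cdot(1+o(1))$; more to the point, rewriting, $\int_X\alpha_j\wedge\eta_j^{n-1}-\int_X\alpha_j\wedge\eta_j^{n-1}\cdot(\text{mass of }\omega'_{\e_j})$ — the cleanest route is to note $\int_X(\omega_{\e_j}-\omega'_{\e_j})\wedge\eta_j^{n-1}\le\e_j\to 0$ together with $\int_{E_j}\omega'_{\e_j}\wedge\eta_j^{n-1}\le\tfrac12$ implies $\int_{X\setminus E_j}\alpha_j\wedge\eta_j^{n-1}\le \int_X\omega_{\e_j}\wedge\eta_j^{n-1}\to 1$, which does \emph{not} immediately contradict the lower bound $\delta/2$.

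\emph{Resolving this tension} is the main obstacle, and it is resolved exactly as in Chiose's and Popovici's arguments by \emph{also} using Lemma \ref{lem: CS Popovici}: applying it with $\theta_1=\alpha_j$, $\theta_2=c_j\omega'_{\e_j}$, $\theta_3=\eta_j$ (so $\theta_1^n=\theta_2\wedge\theta_3^{n-1}$) yields
\[
\Bigl(\int_X\alpha_j\wedge\eta_j^{n-1}\Bigr)\Bigl(\int_X\alpha_j^{n-1}\wedge\omega'_{\e_j}\Bigr)\ \ge\ \frac{c_j}{n}\ \ge\ \frac{\hat v_-(\omega)}{n}.
\]
The first factor is $\int_X\omega_{\e_j}\wedge\eta_j^{n-1}\le 1+o(1)$, and the second is $\le M'+o(1)=\hat v_+(\omega,\dots,\omega,\omega')+o(1)$; passing to the limit gives $\hat v_+(\omega,\dots,\omega,\omega')\ge \hat v_-(\omega)/n$, i.e. $\hat v_-(\omega)\le n\hat v_+(\omega,\dots,\omega,\omega')$, contradicting the hypothesis. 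This is the clean contradiction, and the bad-set estimate above is then not even needed; I would present only the Cauchy--Schwarz (Lemma \ref{lem: CS Popovici}) route. The step I expect to require the most care is the mixed-mass convergence $\int_X\alpha_j^{n-1}\wedge\omega'_{\e_j}\to$ (something $\le \hat v_+(\omega,\dots,\omega,\omega')$): one must expand in powers of $\e$ exactly as at the end of the proof of Theorem \ref{thm:morse}, using $\omega\le\omega_X/2$, $\omega'\le\omega_X/2$ to keep all the auxiliary potentials $\omega_X$-psh, and invoke $v_+(\omega_X)<+\infty$ (via Proposition \ref{pro:easybounds}) to bound every error term by $C\e\,v_+(\omega_X)$. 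Once that bookkeeping is done, Lemma \ref{lem: CS Popovici} delivers the contradiction and hence $\omega-\omega'$ is big.
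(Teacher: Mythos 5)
Your proof is correct and follows exactly the route the paper intends when it says the result is ``a straightforward generalization of Theorem~\ref{thm:morse} along the lines of Theorem~\ref{thm:dempaun}'': Lamari/Hahn--Banach to produce Gauduchon metrics $\eta_j$, Tosatti--Weinkove to solve $\alpha_j^n=c_j\,\omega'_{\e_j}\wedge\eta_j^{n-1}$, and Lemma~\ref{lem: CS Popovici} to deliver $\bigl(\int_X\alpha_j\wedge\eta_j^{n-1}\bigr)\bigl(\int_X\alpha_j^{n-1}\wedge\omega'_{\e_j}\bigr)\ge c_j/n$, after which the Gauduchon identity and the bound $c_j\ge\hat v_-(\omega)$ force $\hat v_-(\omega)\le n\,\hat v_+(\omega,\ldots,\omega,\omega')$. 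You were also right to discard the Chiose bad-set route mid-proof: with $\omega'\ne 0$ the quantity $\int_X\alpha_j\wedge\eta_j^{n-1}$ tends to $1$ rather than $0$, so only the Cauchy--Schwarz/Popovici inequality yields the contradiction.

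One small simplification: the step you flag as requiring the most care, the bound $\int_X\alpha_j^{n-1}\wedge\omega'_{\e_j}\le \hat v_+(\omega,\ldots,\omega,\omega')+o(1)$, does not actually need the $\varepsilon$-expansion bookkeeping from the end of the proof of Theorem~\ref{thm:morse}. By the very definition of $v_+$, taking $\f_1=\cdots=\f_{n-1}=u_j$ and $\f_n=0$ gives $\int_X\alpha_j^{n-1}\wedge\omega'_{\e_j}\le v_+(\omega_{\e_j},\ldots,\omega_{\e_j},\omega'_{\e_j})$. Since $0<\e_1<\e_2$ implies $\omega_{\e_1}+dd^c\f\le\omega_{\e_2}+dd^c\f$ as positive currents, the map $\e\mapsto v_+(\omega_\e,\ldots,\omega_\e,\omega'_\e)$ is nondecreasing, so its infimum $\hat v_+(\omega,\ldots,\omega,\omega')$ is its limit as $\e\to 0$, which is precisely the $o(1)$ you need. (The $\varepsilon$-expansion was required in Theorem~\ref{thm:morse} only because there the target quantity $\alpha^{n-1}\cdot\beta$ is a fixed cohomological intersection number, not defined as a limit; here $\hat v_+$ is defined by the limiting process itself. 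The expansion is still needed, together with $v_+(\omega_X)<+\infty$ and the $\frac{\f_1+\cdots+\f_n}{n}$ averaging trick, to verify that $v_+(\omega_{\e_j},\ldots,\omega_{\e_j},\omega'_{\e_j})$ is finite in the first place.)
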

  
We leave the technical details to the reader.

\end{document}